\documentclass{article}

\usepackage{amsmath,amssymb,amsthm}
\usepackage[margin=1in]{geometry}
\usepackage{hyperref}
\usepackage{authblk}
\usepackage{xcolor}

\theoremstyle{plain}
\newtheorem{theorem}{Theorem}[section]
\newtheorem{proposition}[theorem]{Proposition}

\newtheorem{corollary}[theorem]{Corollary}
\theoremstyle{definition}
\newtheorem{definition}[theorem]{Definition}
\theoremstyle{remark}
\newtheorem{remark}[theorem]{Remark}
\newtheorem{example}[theorem]{Example}

\newcommand{\C}{\mathbb C}
\newcommand{\R}{\mathbb R}
\newcommand{\E}{\mathbb E}
\newcommand{\PP}{\mathbb P}
\newcommand{\U}{\mathcal U}
\newcommand{\Sph}{\mathbb S}
\newcommand{\tr}{\operatorname{tr}}
\newcommand{\Tr}{\operatorname{Tr}}
\newcommand{\Law}{\operatorname{Law}}
\newcommand{\diag}{\operatorname{diag}}
\newcommand{\Id}{\mathbf 1}
\newcommand{\dd}{\,\mathrm d}
\newcommand{\norm}[1]{\left\lVert #1\right\rVert}
\newcommand{\Var}{\operatorname{Var}}
\newcommand{\supp}{\operatorname{supp}}

\title{Rank-One HCIZ Integrals and Multiplicative Convolutions}
\author[1]{Beno\^{\i}t Collins}
\author[2]{Nicolas Delporte}
\author[1]{Manasa Nagatsu}
\author[2]{Reiko Toriumi}
\affil[1]{Department of Mathematics, Graduate School of Science,
Kyoto University, Kyoto 606-8502, Japan}
\affil[2]{Gravity, Quantum Geometry and Field Theory Unit,
Okinawa Institute of Science and Technology Graduate University,
Okinawa 904-0495, Japan}

\date{}

\begin{document}
\maketitle

\begin{abstract}
We study how spectral averaging interacts with the logarithm in large-dimensional rank-one Harish-Chandra--Itzykson--Zuber (HCIZ) integrals.
For independent bi-unitarily invariant matrices $A,B$ with uniformly bounded deterministic singular values and weakly convergent empirical spectral measures of their Gram matrices, the integral for $(BA)^*(BA)\otimes\Id_N$ has two limiting descriptions at speed $N^2$ for small real parameters: averaging its logarithm gives free multiplicative convolution, whereas taking the logarithm after averaging gives classical multiplicative convolution.
We also prove an exact finite-dimensional equality between this annealed integral and the one for $(A^*A)\otimes(B^*B)$.
For every fixed number of factors $k\ge3$, we construct counterexamples showing that the analogous equality can fail even asymptotically.

For tensor products with any fixed number $k$ of factors, global Haar vectors at speed $N^k$ and product-state vectors at speed $N$ yield classical and free multiplicative convolution, respectively, for small real parameters under the corresponding hypotheses.

We also derive the known multiplicativity of the $S$-transform.
\end{abstract}

\section{Introduction}

The Harish-Chandra--Itzykson--Zuber (HCIZ) integral is a central tool in the theoretical-physics approach to random matrices.
Its exact evaluation originates in Harish-Chandra's work \cite{HarishChandra1957}, and its role in matrix models was developed by Itzykson and Zuber \cite{ItzyksonZuber1980}.
For Hermitian matrices $H,K\in M_N(\C)$, the integral
\begin{align*}
  \int_{\U(N)}\exp\{Nt\,\Tr(HUKU^*)\}\dd U
\end{align*}
integrates the relative angular degrees of freedom and leaves a function of the two spectra.
This separation makes it useful for studying eigenvalue distributions, external-source models \cite{ZinnJustin1997}, and operations on random matrices.
In particular, matrix-integral and saddle-point methods were used in \cite{ZinnJustin1999AddMult} to derive the free-probability formulas for addition and multiplication.
Tensor HCIZ integrals extend this program to integration over tensor products of unitary groups and to questions related to entanglement \cite{CollinsGurauLionniI,CollinsGurauLionniII}; cumulants and freeness for locally unitarily invariant random tensors are studied in \cite{CollinsGurauLionniIII,NechitaPark2025}.

In large dimension, the natural object is the logarithm of the integral, normalized by its exponential scale. This is the free energy in the statistical-mechanical interpretation, and its expansion organizes connected contributions.
The scale is typically $N^2$ for two matrices with macroscopic spectra \cite{Matytsin1994}, and $N$ when one matrix is a rank-one projection.
Rigorous links between spherical-integral free energies, spectral large deviations, and free probability were established in \cite{GuionnetZeitouni2002} and, in the rank-one regime, in \cite{GuionnetMaida2005}.
The free cumulants were identified coefficientwise in the rank-one expansion in \cite{Collins2003}.
Finite-rank asymptotics with outliers are treated in \cite{GuionnetHusson2022}; see \cite{HussonKo2025} for sublinear rank.

When the spectra are random, however, the angular integral is only one stage of the averaging. Even when the input eigenvalues in an additive problem, or the input singular values in a multiplicative problem, are deterministic, the spectrum relevant to the HCIZ integral of the sum or product can be random. It depends on the relative eigenvectors or singular vectors.
One must therefore specify whether spectral averaging is performed before or after taking the logarithm. To state the question in the rank-one setting, write
\begin{align*}
  I_N(t;H_N)=\E_x\exp\{Nt\,x^*H_Nx\},
\end{align*}
where $x$ is an independent Haar unit vector and the expectation is conditional on the Hermitian matrix $H_N$. The two deterministic quantities to compare are
\begin{align*}
  \frac1N\E_{H_N}\log I_N(t;H_N)
  \qquad\text{and}\qquad
  \frac1N\log\E_{H_N}I_N(t;H_N).
\end{align*}
The first is the disorder average of the quenched free energy, and the second is the annealed free energy; see \cite{Talagrand2003} and the precise definitions in \eqref{eq:quenched-spherical-integral}--\eqref{eq:annealed-quenched-jensen}.
Because $I_N$ depends only on the spectrum, the matrix expectation here is equivalently an average over the random empirical spectral measure.
Jensen's inequality orders these quantities, but does not identify their limits.
An atypical spectrum with exponentially small probability can still contribute to the annealed integral if the exponential weight compensates for its cost.
This competition is the content of the large-deviation and Laplace-principle approach to exponential averages; see \cite[Section~4.3]{DemboZeitouni2010} and \cite{GuionnetZeitouni2002}.
Quenched and annealed free energies of spherical integrals are compared in the study of large deviations for the largest eigenvalue of sums and products of invariant random matrices \cite{MergnyPotters2022LargeDeviations}.

\subsection*{Main results}

Let $A_N,B_N$ be independent bi-unitarily invariant matrices with deterministic, uniformly bounded singular values.
Suppose that the empirical spectral measures of $A_N^*A_N$ and $B_N^*B_N$ converge weakly to compactly supported probability measures $\mu_A,\mu_B$
on $[0,\infty)$.
Set $H_N=(B_NA_N)^*(B_NA_N)$.
By Corollary~\ref{cor:two-orders-of-averaging}, for all real $t$ sufficiently close to $0$,
\begin{align*}
  \lim_{N\to\infty}\frac1{N^2}\E_{A_N,B_N}\log I_{N^2}(t;H_N\otimes\Id_N)&=\int_0^tR_{\mu_A\boxtimes\mu_B}(s)\dd s,\\
  \lim_{N\to\infty}\frac1{N^2}\log\E_{A_N,B_N}I_{N^2}(t;H_N\otimes\Id_N)&=\int_0^tR_{\mu_A\circledast\mu_B}(s)\dd s.
\end{align*}
Here $R_\mu$ denotes the $R$-transform, and $\boxtimes$ and $\circledast$ denote free and classical multiplicative convolution, respectively. If neither $\mu_A$ nor $\mu_B$ is a point mass, the second limit is strictly larger than the first for all sufficiently small nonzero real $t$, by~\eqref{eq:conclusion-averaging-gap}.

The annealed limit follows from Theorems~\ref{thm:exact-two-leg-hciz} and~\ref{thm:global-classical}.
The former gives an exact finite-dimensional annealed identity between the global rank-one spherical integrals associated with the Gram matrices of the two operators
\begin{align*}
  X_{1,N}\otimes X_{2,N} \quad \text{and} \quad (X_{1,N}X_{2,N})\otimes\Id_N,
\end{align*}
where $X_{1,N}$ is arbitrary and $X_{2,N}$ is independent of $X_{1,N}$ and bi-unitarily invariant.
The identity holds for every $N$ and every real $t$, but need not hold conditional on the matrix realizations.

For every fixed $k\ge3$, Proposition~\ref{prop:higher-leg-counterexample} constructs independent bi-unitarily invariant matrices with deterministic singular values such that, for every sufficiently small fixed $t>0$, the normalized annealed log-spherical integral for $(X_{1,N}\cdots X_{k,N})\otimes\Id_N^{\otimes(k-1)}$ at speed $N^k$ has a strictly larger lower limit than the limit for $X_{1,N}\otimes\cdots\otimes X_{k,N}$.

The free-convolution description of the averaged logarithm still extends to any fixed number of factors at speed $N^k$ under the corresponding hypotheses of Corollary~\ref{cor:two-orders-of-averaging}, by iterating Proposition~\ref{prop:product-levy-concentration}.

We also compare the global Haar and product-state integrals for the same tensor product. For any fixed $k\ge2$ and matrices $X_{1,N},\ldots,X_{k,N}$, set $\mathcal X_N=X_{1,N}\otimes\cdots\otimes X_{k,N}$.
The empirical spectral measure of $\mathcal X_N^*\mathcal X_N$ is exactly the classical multiplicative convolution of the empirical spectral measures of $X_{i,N}^*X_{i,N}$.
Under the weak convergence and norm assumptions of Theorem~\ref{thm:global-classical}, the conditional normalized log-spherical integral for a global Haar vector at speed $N^k$ converges in probability to $\int_0^tR_{\mu_1\circledast\cdots\circledast\mu_k}(s)\dd s$ for real $t$ near $0$. With the additional edge convergence assumptions, the same limit holds on the larger domain stated there.
This statement requires neither independence nor invariance of the factors.
Under the independent one-leg hypotheses of Theorem~\ref{thm:local-free}, the annealed normalized log-spherical integral for a product-state vector $x_{1,N}\otimes\cdots\otimes x_{k,N}$ is governed at speed $N$ by $\mu_1\boxtimes\cdots\boxtimes\mu_k$.
For fixed matrices, the product-state integral is the tensor HCIZ integral over $\U(N)^k$ with external matrices $\mathcal X_N^*\mathcal X_N$ and a rank-one projection onto a fixed unit product vector, and exponential parameter $Nt$ \cite[Eqs.~(1.1)--(1.2)]{CollinsGurauLionniI}. The global Haar integral uses $\U(N^k)$; see also \cite{AubrunSzarek2017}.
For matrices with deterministic singular values as in Corollary~\ref{cor:local-free-deterministic}, the corresponding fixed-order cumulant asymptotics follow from \cite[Lemma~5.3]{CollinsGurauLionniII}. Their relation to tensorial free cumulants is discussed in \cite[Section~4.3]{BucDAlcheLionni2026}.

For multiplication, we use the following distributional identity.
If $A,B$ are independent bi-unitarily invariant matrices and $x,y$ are independent Haar unit vectors, independent of the matrices, then
\begin{align*}
  \norm{BAx}^2\stackrel{d}{=}\norm{Ax}^2\norm{By}^2,
\end{align*}
with independent factors on the right.
Related radial decompositions occur in the study of isotropic matrix products and Lyapunov exponents \cite{CohenNewman1984,Newman1986,KieburgKosters2019}.
The Laplace transform of this scalar product does not factor.
Moreover, its ordinary weak limit generally retains only the first moments of the limits of the empirical spectral measures of $A_N^*A_N$ and $B_N^*B_N$. Recovering the full spectral information requires the speed-$N$ logarithmic Laplace transform and a scalar saddle-point calculation.

At speed $N$, the spectral concentration and annealed HCIZ estimates of Propositions~\ref{prop:product-levy-concentration} and~\ref{prop:loewner-squeeze} justify averaging over the product spectrum under their stated hypotheses. Combining these estimates with~\eqref{eq:finite-radial-product} and a local form of the Laplace principle, Theorem~\ref{thm:radial-s-transform} gives a radial derivation of the classical identity
\begin{align*}
  S_{\mu_A\boxtimes\mu_B}(z)=S_{\mu_A}(z)S_{\mu_B}(z),
\end{align*}
proved in \cite[Theorem~2.6]{Voiculescu1987}. The argument uses neither a theorem on asymptotic freeness nor $S$-multiplicativity as an input. For the connection between random-matrix limits and free probability, see \cite{Voiculescu1991}.

Theorem~\ref{thm:quantitative-local-multiplicativity} gives a uniform finite-dimensional estimate for independent bi-unitarily invariant matrices $A_N,B_N$ with singular values in $[c,C]$, where $0<c<C<\infty$.
On a common complex disk about $0$, the multiplicative rule for the empirical $S$-transforms holds with uniform error at most any prescribed $\delta>0$, with probability at least $1-e^{-\gamma N^2}$ for all sufficiently large $N$, uniformly over the singular-value lists.

This route complements earlier matrix-integral approaches to multiplication.
The derivation in \cite[Section~3]{ZinnJustin1999AddMult} uses a character integral and a saddle-point calculation.
Multiplicative spherical integrals average powers of positive quadratic forms in the rank-one case; their asymptotics are studied by saddle-point analysis in \cite{MergnyPotters2020} and proved rigorously for orthogonal and unitary integrals of fixed rank in \cite{Husson2025}.
For deterministic positive singular values, taking power moments in~\eqref{eq:finite-radial-product} gives the rank-one case of the finite-dimensional product formula in \cite[Eq.~(12)]{MergnyPotters2020}.
Spherical transforms of polynomial ensembles linearize matrix multiplication at finite dimension \cite[Eq.~(2.20)]{KieburgKosters2019}.
Here we retain the ordinary additive rank-one HCIZ integral applied to $A_N^*A_N$ and justify its spectral averaging before applying~\eqref{eq:finite-radial-product}.
The rectangular spherical-integral interpretation of the rectangular $R$-transform offers another related perspective \cite{BenaychGeorges2011}.

A related motivation comes from Gurau's resolvent for real symmetric Gaussian tensors \cite{Gurau2020}, whose large-dimensional expansion involves Fuss--Catalan numbers. These also occur in the positive matrix-product measures obtained in our Marchenko--Pastur example \cite{BanicaBelinschiCapitaineCollins2011,PensonZyczkowski2011}.
The symmetric Wigner--Gurau limit and those positive measures are different; the analogy suggests further questions about tensor integrals, rather than an identification of the underlying models.

\subsection*{Organization of the paper}

The paper is organized as follows.
Section~\ref{sec:notation} introduces the rank-one spherical integral, the two orders of averaging, and the distributional identity~\eqref{eq:finite-radial-product}.
Section~\ref{sec:radial-s-transform} proves the spectral comparison and concentration estimates, gives a radial derivation of Voiculescu's $S$-multiplicativity formula, and obtains a uniform finite-dimensional estimate for that relation.
Section~\ref{sec:two-leg} proves the exact two-leg identity.
Section~\ref{sec:tensor-regimes} compares global Haar and product-state vectors and compares averaging before and after the logarithm.
Section~\ref{sec:higher-leg-obstruction} gives the counterexample for three or more legs.

\section{Preliminaries}
\label{sec:notation}

We write $M_N(\C)$ for the algebra of $N\times N$ complex matrices and define the normalized trace by
$\tr_N(A)=\frac1N\Tr(A)$.
For matrices, $\norm{\cdot}$ denotes the operator norm and $\norm{A}_{\mathrm{HS}}=(\Tr(A^*A))^{1/2}$ the unnormalized Hilbert--Schmidt norm. For vectors, $\norm{\cdot}$ is the Euclidean
norm.
We write $\Id_N$ for the $N\times N$ identity matrix.
The unit sphere of $\C^N$ is denoted by $\Sph^{2N-1}$ and is always equipped with its Haar probability measure.
A random matrix $A\in M_N(\C)$ is called \emph{bi-unitarily invariant} if, for all deterministic $U,V\in\U(N)$,
\begin{align*}
  A\stackrel{d}{=}UAV.
\end{align*}
Equivalently, $A$ admits the distributional representation
$A\stackrel{d}{=}UDV^*$,
where $D=\operatorname{diag}(s_1,\ldots,s_N)$, with $s_1\geq\cdots\geq s_N\geq0$, is the random diagonal matrix of singular values, and $U,V$ are independent Haar-distributed unitaries, independent of $D$.

For a Hermitian matrix $H\in M_N(\C)$ we denote by
\begin{align*}
  \mu_H=\frac1N\sum_{i=1}^N\delta_{\lambda_i(H)}
\end{align*}
its empirical spectral measure.

For probability measures $\mu,\nu$ on $\R$, with distribution functions $F_\mu,F_\nu$, their L\'evy distance is
\begin{align}\label{eq:levy-distance-definition}
  d_{\mathrm L}(\mu,\nu)
  =\inf\left\{\varepsilon>0:
  F_\mu(x-\varepsilon)-\varepsilon\leq F_\nu(x)
  \leq F_\mu(x+\varepsilon)+\varepsilon
  \ \text{for every }x\in\R\right\}.
\end{align}
It metrizes weak convergence.
We call a probability measure \emph{non-degenerate} if it is not a point mass.

For Hermitian matrices $H,K\in M_N(\C)$, $H\leq K$ means that $K-H$ is positive semidefinite.

\subsection{Transforms and convolutions}

We also recall the free-probability transforms used below. Let $\mu$ be a compactly supported probability measure. Its Cauchy transform
\begin{align}\label{eq:cauchy-transform}
  G_\mu(z)=\int\frac1{z-x}\,\dd\mu(x)
\end{align}
is univalent on a neighborhood of infinity and satisfies $G_\mu(z)\sim z^{-1}$. We write $K_\mu$ for the inverse branch defined for sufficiently small nonzero complex $w$, characterized by $G_\mu(K_\mu(w))=w$ and $K_\mu(w)\sim w^{-1}$ as $w\to0$.
The $R$-transform is the function analytic in a neighborhood of $0$ defined by
\begin{align}\label{eq:r-transform-definition}
  R_\mu(w)=K_\mu(w)-\frac1w.
\end{align}
Since $G_\mu(z)=z^{-1}+m_1(\mu)z^{-2}+O(z^{-3})$ as $z\to\infty$, inversion gives $K_\mu(w)=w^{-1}+m_1(\mu)+O(w)$ as $w\to0$. Hence its value at $0$ is defined by continuity as $R_\mu(0)=m_1(\mu)$.
Equivalently,
\begin{align}\label{eq:r-cumulant-expansion}
  R_\mu(w)=\sum_{n\geq1}\kappa_n(\mu)w^{n-1},
\end{align}
where $\kappa_n(\mu)$ are the free cumulants.

We also use the real continuation appearing in rank-one HCIZ asymptotics. Let $\ell_\mu=\min\supp\mu$ and $r_\mu=\max\supp\mu$. For any $\ell\leq\ell_\mu$ and $r\geq r_\mu$, set
\begin{align}\label{eq:real-r-transform-domain}
  \mathcal D_\mu^{\ell,r}=G_\mu((-\infty,\ell)\cup(r,\infty))\cup\{0\},
  \qquad
  \mathcal D_\mu=\mathcal D_\mu^{\ell_\mu,r_\mu}.
\end{align}
The restrictions of $G_\mu$ to $(-\infty,\ell_\mu)$ and $(r_\mu,\infty)$ are invertible, and $\mathcal D_\mu^{\ell,r}\subset\mathcal D_\mu$. On $\mathcal D_\mu\setminus\{0\}$, let $K_\mu$ denote the corresponding inverse branches and set $R_\mu(w)=K_\mu(w)-w^{-1}$, with $R_\mu(0)=m_1(\mu)$. This real continuation agrees with the analytic function in~\eqref{eq:r-transform-definition} for real arguments sufficiently close to $0$.

If $\mu$ is supported on $[0,\infty)$ and has non-zero first moment, set
\begin{align*}
  \psi_\mu(z)=\int\frac{zx}{1-zx}\,\dd\mu(x)
  =\sum_{n\geq1}m_n(\mu)z^n,
  \qquad
  \chi_\mu=\psi_\mu^{-1}.
\end{align*}
Here $\chi_\mu$ is the local analytic inverse for composition, with $\chi_\mu(0)=0$.
The $S$-transform is the function analytic in a neighborhood of $0$ defined by
\begin{align*}
  S_\mu(z)=\frac{1+z}{z}\,\chi_\mu(z).
\end{align*}
Since $\psi_\mu'(0)=m_1(\mu)\neq0$, differentiation of $\psi_\mu(\chi_\mu(z))=z$ at $0$ gives $\chi_\mu'(0)=1/m_1(\mu)$. Using $\chi_\mu(0)=0$, the continuous extension therefore satisfies $S_\mu(0)=\chi_\mu'(0)=1/m_1(\mu)$.
These definitions follow Voiculescu's convention
\cite{Voiculescu1987,VoiculescuDykemaNica1992,NicaSpeicher2006,MingoSpeicher2017}.

We shall use the equivalent relation
\begin{align}\label{eq:s-r-coordinate-relation}
  S_\mu\bigl(uR_\mu(u)\bigr)=\frac1{R_\mu(u)},
\end{align}
valid for $u$ near $0$ whenever the first moment of $\mu$ is non-zero.
Indeed, if $z=\chi_\mu(w)$, then
$G_\mu(1/z)=z(1+w)=wS_\mu(w)$,
where the last equality follows from
$S_\mu(w)=(1+w)z/w$. Hence
$K_\mu\bigl(wS_\mu(w)\bigr)=\frac1z=\frac{1+w}{wS_\mu(w)}$,
and therefore
\begin{align*}
  R_\mu\bigl(wS_\mu(w)\bigr)=\frac1{S_\mu(w)}.
\end{align*}
Since $S_\mu(0)=1/m_1(\mu)\neq0$, the map $w\mapsto wS_\mu(w)$ is
locally invertible at $0$.  The preceding identity shows that its local
inverse is $u\mapsto uR_\mu(u)$, which gives
\eqref{eq:s-r-coordinate-relation}.

\begin{definition}[multiplicative free convolution]\label{def:boxtimes}
Let $\mu_A$ and $\mu_B$ be compactly supported probability measures on $[0,\infty)$.
Choose positive $*$-free noncommutative random variables $x$ and $y$ with laws $\mu_A$ and $\mu_B$ in a noncommutative probability space with tracial state $\tau$.
The multiplicative free convolution $\mu_A\boxtimes\mu_B$ is the law of the positive variable
\begin{align}\label{eq:boxtimes-operator-definition}
  \mu_A\boxtimes\mu_B
  := \Law\!\left(x^{1/2}yx^{1/2}\right).
\end{align}
Equivalently, its moments are $\tau((xy)^n)$, since these agree by traciality with the moments of the variable defining the law in
\eqref{eq:boxtimes-operator-definition}.
By associativity of multiplicative free convolution, the notation $\mu_1\boxtimes\cdots\boxtimes\mu_k$ is unambiguous.
\end{definition}

\begin{definition}[classical multiplicative convolution]
If $\mu$ and $\nu$ are probability measures on $[0,\infty)$, their classical multiplicative convolution $\mu\circledast\nu$ is the law of $XY$, where $X$ and $Y$ are independent scalar random variables with laws $\mu$ and $\nu$.
We write $\mu_1\circledast\cdots\circledast\mu_k$ for the iterated classical multiplicative convolution.
\end{definition}

\subsection{Rank-one spherical integrals}

\begin{definition}[rank-one spherical integral]
Let $A\in M_N(\C)$, let $H=H^*\in M_N(\C)$, and let $x$ be Haar distributed on $\Sph^{2N-1}$.
The rank-one spherical integral is
\begin{align}\label{eq:rank-one-spherical-integral}
  I_N(t;H)=\E_x\exp\{Nt\,x^*Hx\},
\end{align}
which is finite for every real $t$. If $A$ is random and independent of $x$, the \emph{quenched} normalized log-spherical integral is the conditional quantity
\begin{align}\label{eq:quenched-spherical-integral}
  \Lambda_{A,N}^{\mathrm q}(t)
  =\frac1N\log I_N(t;A^*A),
\end{align}
which remains random through the matrix realization. The \emph{annealed} rank-one spherical transform and its normalized logarithm are
\begin{align}\label{eq:annealed-spherical-integral}
  J_N(t;A)=\E_{A,x}\exp\{Nt\,x^*A^*Ax\},
  \qquad
  \Lambda_{A,N}^{\mathrm a}(t)=\frac1N\log J_N(t;A).
\end{align}
This annealed expression is used only for parameters for which it is finite.
\end{definition}

\begin{remark}\label{rem:rank-one-hciz-representation}
For fixed $A$, $I_N(\cdot;A^*A)$ determines the spectrum of $A^*A$ by Proposition~\ref{prop:finite-reconstruction}.
The terminology follows the standard convention for random systems with disorder; see, for example,~\cite{Talagrand2003}. The matrix is held fixed in the quenched quantity \eqref{eq:quenched-spherical-integral}, whereas it is averaged inside the logarithm in the annealed quantity \eqref{eq:annealed-spherical-integral}.
One also considers the disorder-averaged quenched free energy $\E_A\Lambda_{A,N}^{\mathrm q}(t)$. Jensen's inequality gives
\begin{align}\label{eq:annealed-quenched-jensen}
  \E_A\Lambda_{A,N}^{\mathrm q}(t)
  \leq \Lambda_{A,N}^{\mathrm a}(t).
\end{align}
The two sides can differ because rare matrix realizations may be visible to the annealed exponential moment. In particular, $J_N(\cdot;A)=\E_A I_N(\cdot;A^*A)$ need not determine the law of the random empirical spectral measure of $A^*A$.

Also, the normalization $Nt$ in~\eqref{eq:rank-one-spherical-integral} is the natural rank-one HCIZ normalization.
If $X_N=ee^*$ is a rank-one orthogonal projector with $\norm{e}=1$ and $Y_N$ is deterministic and Hermitian, the corresponding HCIZ integral is
\begin{align*}
  \int_{\U(N)}\exp\{Nt\Tr(X_NUY_NU^*)\}\dd U.
\end{align*}
Since $\Tr(X_NUY_NU^*)=(U^*e)^*Y_N(U^*e)$ and $U^*e$ is a Haar unit vector, this integral equals $I_N(t;Y_N)$.
\end{remark}

The moment formula used below is also given in \cite{DunklEtAl2011NumericalShadows}.

\begin{proposition}
\label{prop:finite-reconstruction}
Let $H\geq0$ have eigenvalues $\lambda_1,\ldots,\lambda_N$. The law of $x^*Hx$, with $x$ Haar on $\Sph^{2N-1}$, determines the unordered $N$-tuple $(\lambda_1,\ldots,\lambda_N)$.
Equivalently, the function $t\mapsto I_N(t;H)$ determines that unordered tuple.
\end{proposition}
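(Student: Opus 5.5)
By unitary invariance of Haar measure on $\Sph^{2N-1}$, I would first diagonalize $H=W\diag(\lambda_1,\ldots,\lambda_N)W^*$ and replace $x$ by $W^*x$. Then
\begin{align*}
  x^*Hx\stackrel{d}{=}\sum_{i=1}^N\lambda_i|x_i|^2 .
\end{align*}
The vector $(|x_1|^2,\ldots,|x_N|^2)$ is uniformly distributed on the simplex, i.e.\ Dirichlet$(1,\ldots,1)$. This can be seen from the representation $x=g/\norm{g}$ with $g$ a standard complex Gaussian vector, whose squared moduli are i.i.d.\ exponential. So the law of $x^*Hx$ depends only on the unordered tuple $(\lambda_i)$, and the task is to recover the tuple from this law.

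The law is compactly supported in $[0,\norm{H}]$, so it is determined by its moments. Conversely, the function $t\mapsto I_N(t;H)$ is the Laplace transform of $Nx^*Hx$ and is entire. Its Taylor coefficients at $0$ are $N^n\E(x^*Hx)^n/n!$, so knowing $I_N(\cdot;H)$ is equivalent to knowing the law. This gives the stated equivalence of the two formulations, and it remains to extract the tuple from the moments.

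I would compute the moments with the Gaussian trick. Since $\norm{g}^2\sim\mathrm{Gamma}(N,1)$ is independent of $x$,
\begin{align*}
  \E\bigl(g^*Hg\bigr)^n=\E\norm{g}^{2n}\,\E(x^*Hx)^n=\frac{(N+n-1)!}{(N-1)!}\,\E(x^*Hx)^n .
\end{align*}
The left side equals $n!\,h_n(\lambda)$, where $h_n$ is the complete homogeneous symmetric polynomial. One way to see this is the generating function
\begin{align*}
  \E e^{s\,g^*Hg}=\prod_i(1-s\lambda_i)^{-1}=\sum_n h_n(\lambda)s^n
\end{align*}
for small $s$. Alternatively, expand $\E\bigl(\sum_i\lambda_iE_i\bigr)^n$ with $\E E_i^k=k!$; the multinomial coefficients cancel and leave $n!\,h_n(\lambda)$.

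Hence the law of $x^*Hx$ determines $h_n(\lambda)$ for $n=1,\ldots,N$. By Newton-type identities, $\sum_{n\ge0}h_nu^n=\prod_i(1-\lambda_iu)^{-1}$ is the reciprocal of $\sum_n(-1)^ne_nu^n$. The relations $\sum_{j=0}^n(-1)^je_jh_{n-j}=0$ therefore determine $e_1,\ldots,e_N$ recursively from $h_1,\ldots,h_N$. The $e_j$ are the coefficients of $\prod_i(z-\lambda_i)$, whose roots with multiplicity give the unordered tuple. The hypothesis $H\ge0$ is not actually needed for this argument, but it does no harm.

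The only step needing care is the moment identity. Its key ingredient is the independence of $\norm{g}$ and $g/\norm{g}$, which is standard. An alternative to the Gaussian trick is to use the explicit density of a linear form of a Dirichlet vector, a divided-difference B-spline, but the moment route avoids dealing with repeated eigenvalues, so I would use it.
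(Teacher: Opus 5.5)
Your proposal is correct and follows essentially the same route as the paper. Both diagonalize, use the Dirichlet$(1,\ldots,1)$ moment formula $\E_x(x^*Hx)^n=\frac{n!(N-1)!}{(N+n-1)!}h_n(\lambda)$, recover the polynomial $\prod_i(1-z\lambda_i)$ from the $h_n$, and read the moments off the derivatives of $I_N(\cdot;H)$ at $0$. The differences are only in how steps are filled in: you derive the moment formula from the Gaussian polar decomposition and invert with the Newton recursion on $h_1,\ldots,h_N$, whereas the paper quotes the moment formula and inverts the full generating function.
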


\begin{proof}
After diagonalizing $H$ and using Haar invariance,
$x^*Hx\stackrel{d}{=}\sum_{i=1}^N\lambda_i|x_i|^2$, where
$(|x_1|^2,\ldots,|x_N|^2)$ has the Dirichlet distribution with parameters
$(1,\ldots,1)$. Its moment formula gives, for every $k\geq0$,
\begin{align*}
  \E_x(x^*Hx)^k=\frac{k!(N-1)!}{(N+k-1)!}h_k(\lambda_1,\ldots,\lambda_N),
\end{align*}
where $h_k$ is the complete homogeneous symmetric polynomial of degree
$k$. Hence the law of $x^*Hx$ determines $h_k(\lambda_1,\ldots,\lambda_N)$
for every $k$. Since
\begin{align*}
  \sum_{k\geq0}h_k(\lambda_1,\ldots,\lambda_N)z^k
  =\prod_{i=1}^N\frac1{1-z\lambda_i},
\end{align*}
these coefficients determine the polynomial
$\prod_{i=1}^N(1-z\lambda_i)$ and therefore the eigenvalue multiset,
including the multiplicity of $0$ since $N$ is fixed. Finally,
$x^*Hx$ is bounded and
$\left.\frac{d^k}{dt^k}I_N(t;H)\right|_{t=0}
=N^k\E_x(x^*Hx)^k$, so $I_N(\cdot;H)$ determines the same moments.
\end{proof}

The next statement gives the large-$N$ connection with free probability.

\begin{theorem}[rank-one HCIZ and the \texorpdfstring{$R$}{R}-transform]
\label{thm:rank-one-hciz}
This is the unitary rank-one case of \cite[Theorem~2]{GuionnetMaida2005}, recorded here with our normalization.
Let $H_N=H_N^*$ be deterministic matrices whose empirical spectral measures converge weakly to a compactly supported probability measure $\mu$.
Assume that
\begin{align*}
  \lambda_{\min}(H_N)\to \ell,
  \qquad
  \lambda_{\max}(H_N)\to r,
\end{align*}
where $\supp\mu\subset[\ell,r]$.
Then, for every real $t\in\mathcal D_\mu^{\ell,r}$ as defined in~\eqref{eq:real-r-transform-domain}, we have
\begin{align}\label{eq:rank-one-hciz-r-primitive}
  \lim_{N\to\infty}\frac1N\log I_N(t;H_N)
  =\int_0^tR_\mu(s)\dd s.
\end{align}
In particular, this holds for all real $t$ in a neighborhood of $0$.
\end{theorem}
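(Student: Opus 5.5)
We will prove Theorem~\ref{thm:rank-one-hciz} by a direct Laplace-method analysis of the scalar integral, following the strategy of \cite{GuionnetMaida2005}. By Haar invariance, as in the proof of Proposition~\ref{prop:finite-reconstruction}, write
\begin{align*}
  x^*H_Nx\stackrel{d}{=}\frac{\sum_i\lambda_i|g_i|^2}{\sum_i|g_i|^2},
\end{align*}
where the $g_i$ are i.i.d.\ standard complex Gaussians. Fix $t\in\mathcal D_\mu^{\ell,r}\setminus\{0\}$; the case $t=0$ is trivial. Assume $t>0$; the case $t<0$ follows by replacing $H_N$ with $-H_N$. Define $\theta$ by $G_\mu(\theta)=t$ with $\theta>r$; this is possible because $G_\mu$ is a decreasing bijection from $(r,\infty)$ onto $(0,G_\mu(r^+))$. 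Then $K_\mu(t)=\theta$. The candidate limit is
\begin{align*}
  F(t)=t\theta-1-\log t-\int\log(\theta-x)\dd\mu(x).
\end{align*}
Differentiating in $t$ and using $G_\mu(\theta)=t$ gives $F'(t)=\theta-1/t=R_\mu(t)$. Since $F(t)\to0$ as $t\to0^+$, we get $F(t)=\int_0^tR_\mu$.

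The main step is an exponential change of measure. Set $\theta_N=\theta$; this is admissible since $\lambda_{\max}(H_N)\to r<\theta$. Introduce the tilted density proportional to $\exp\{-\sum_i(\theta-\lambda_i)|g_i|^2\}$ under which the $|g_i|^2$ are independent exponentials with means $(\theta-\lambda_i)^{-1}$. Using the polar decomposition $\sum|g_i|^2=\rho$ with $\rho\sim\Gamma(N,1)$ independent of the direction, one obtains the exact formula
\begin{align*}
  I_N(t;H_N)=\frac{\Gamma(N)}{(Nt)^{N-1}}\,\frac{1}{2\pi i}\oint e^{Ntz}\prod_i\frac{1}{z-\lambda_i}\dd z
\end{align*}
as a contour integral, or equivalently its probabilistic form
\begin{align*}
  I_N=\frac{\Gamma(N)e^{Nt\theta}}{(Nt)^{N-1}\prod_i(\theta-\lambda_i)}\cdot\E\bigl[\text{density of }\textstyle\sum(\theta-\lambda_i)|g_i|^2/N\text{-type statistic at }t\bigr].
\end{align*}
Taking $\frac1N\log$, Stirling gives $\frac1N\log\Gamma(N)-\frac{N-1}N\log(Nt)\to-1-\log t$. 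Weak convergence of $\mu_{H_N}$ together with $\theta>\sup_N\lambda_{\max}(H_N)$ eventually gives $\frac1N\sum\log(\theta-\lambda_i)\to\int\log(\theta-x)\dd\mu$, since $\log(\theta-x)$ is bounded and continuous near $\supp\mu$ and all eigenvalues lie there. This yields $F(t)$, provided the remaining density factor is $e^{o(N)}$.

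The main obstacle is therefore to show that the tilted local factor is subexponential. Under the tilted law, $\frac1N\sum_i|g_i|^2$ has mean $\frac1N\sum_i(\theta-\lambda_i)^{-1}=G_{\mu_{H_N}}(\theta)\to G_\mu(\theta)=t$. Each summand has bounded mean and variance, uniformly because $\theta-\lambda_i\ge c>0$. A local central limit theorem, or simply a smoothing argument that brackets the density between probabilities of intervals of width $N^{-1/2}$, shows that the density is polynomial in $N$, hence $e^{o(N)}$. This is exactly where the edge hypothesis $\lambda_{\max}\to r$ matters: an outlier eigenvalue above $\theta$ would destroy the tilt. In the boundary case $t=G_\mu(r^+)$, if that endpoint is included in $\mathcal D_\mu^{\ell,r}$, we take $\theta_N\downarrow r$ appropriately and use continuity. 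Finally, for small $|t|$, the point $t$ lies in $\mathcal D_\mu^{\ell,r}$, and the real branch agrees with the analytic $R$-transform, which gives the final sentence of the theorem.
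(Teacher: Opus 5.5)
Your route is the one behind \cite[Theorem~2]{GuionnetMaida2005}; the paper cites that result and gives no proof of its own. Most of your steps are fine:
\begin{itemize}
\item the Gaussian representation;
\item the exact identity $I_N(t;H_N)=\frac{\Gamma(N)e^{Nt\theta}}{N(Nt)^{N-1}\prod_i(\theta-\lambda_i)}\,p_N^{(\theta)}(t)$, where $p_N^{(\theta)}$ is the density of $\frac1N\sum_i|g_i|^2$ under the $\theta$-tilt, valid for every $\theta>\lambda_{\max}(H_N)$;
\item the Stirling and logarithmic-potential limits;
\item the check $F'=R_\mu$, $F(0^+)=0$;
\item the reduction of $t<0$ to $t>0$.
\end{itemize}

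The gap is in the step you yourself call the main one. With $\theta$ fixed independently of $N$, the tilted mean is $G_{\mu_{H_N}}(\theta)$. Weak convergence says nothing about how fast this approaches $t$. If $\sqrt N\,|G_{\mu_{H_N}}(\theta)-t|\to\infty$, then $t$ lies in a moderate-deviation region of the tilted law. There $p_N^{(\theta)}(t)$ is of order $\sqrt N\exp\{-cN(G_{\mu_{H_N}}(\theta)-t)^2\}$, which is not polynomial in $N$. A local CLT has additive error $o(\sqrt N)$, so it gives no lower bound in that region. Bracketing by intervals of width $N^{-1/2}$ meets the same obstruction.

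The upper bound is harmless. A sum of independent exponentials has density at most the largest rate, here $\theta-\lambda_{\min}(H_N)=O(1)$, so $p_N^{(\theta)}=O(N)$. But the lower bound $p_N^{(\theta)}(t)\ge e^{-o(N)}$, and with it the $\liminf$ half of the theorem, is not justified as written.

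The missing idea is the finite-$N$ saddle point. Since the identity holds for every $\theta>\lambda_{\max}(H_N)$, let $\theta_N$ be the unique solution in $(\lambda_{\max}(H_N),\infty)$ of $G_{\mu_{H_N}}(\theta_N)=t$. It exists because $G_{\mu_{H_N}}$ decreases there from $+\infty$ to $0$.
\begin{itemize}
\item The edge hypotheses give $G_{\mu_{H_N}}\to G_\mu$ uniformly near $\theta$, hence $\theta_N\to\theta$.
\item The rates $a_i=\theta_N-\lambda_i$ eventually lie in a fixed compact $[c,C']\subset(0,\infty)$.
\item The tilted mean is now exactly $t$. So in $p_N^{(\theta_N)}(t)=\frac N{2\pi}\int_\R e^{iNtu}\prod_i(1+iu/a_i)^{-1}\dd u$ the linear part of the phase cancels.
\item A neighbourhood of $u=0$ gives a Gaussian contribution of order $\sqrt N$. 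The rest is exponentially smaller, by $|\prod_i(1+iu/a_i)^{-1}|\le(1+u^2/C'^2)^{-N/2}$.
\item Together with $t\theta_N\to t\theta$ and $\frac1N\sum_i\log(\theta_N-\lambda_i)\to\int\log(\theta-x)\dd\mu(x)$, this closes the argument.
\end{itemize}
Equivalently, your fixed-$\theta$ density equals $p_N^{(\theta_N)}(t)\,e^{N(h_N(\theta_N)-h_N(\theta))}$ with $h_N(\vartheta)=t\vartheta-\frac1N\sum_i\log(\vartheta-\lambda_i)$. Since $h_N(\theta_N)-h_N(\theta)\to0$, the $e^{o(N)}$ bound you need is true; it just does not follow from the reasoning you gave.

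Your boundary-case sentence can be dropped. $\mathcal D_\mu^{\ell,r}$ is $\{0\}$ together with the images of open intervals under $G_\mu$, so $G_\mu(r^+)$ never belongs to it.
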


\begin{remark}
In the notation of \cite{GuionnetMaida2005},
$\beta=2$, so the factor $\frac\beta2\int_0^{2t/\beta}R_\mu(s)\dd s$ is the right-hand side of \eqref{eq:rank-one-hciz-r-primitive}.
The cumulant form is closely related to rank-one Itzykson--Zuber cumulants in \cite{Collins2003}.
\end{remark}

\subsection{Matrix products}
\label{sec:rank-one-products}

\begin{proposition}
\label{prop:finite-radial-identity}
Let $A,B\in M_N(\C)$ be independent bi-unitarily invariant random matrices, and let $x,y$ be independent Haar unit vectors, independent of $A,B$. Then
\begin{align}
  \label{eq:finite-radial-product}
  \norm{BAx}^2
  \stackrel{d}{=}
  \norm{Ax}^2\,\norm{By}^2,
\end{align}
with the two factors on the right-hand side independent.
\end{proposition}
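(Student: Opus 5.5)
We prove the identity by a polar decomposition of $Ax$ and the invariance of $B$. On the event $Ax\neq0$ we write $Ax=\norm{Ax}\,u$ with $u=Ax/\norm{Ax}$ a unit vector, so that
\begin{align*}
  \norm{BAx}^2=\norm{Ax}^2\,\norm{Bu}^2 .
\end{align*}
On the event $Ax=0$ both sides of~\eqref{eq:finite-radial-product} vanish, and we may set $u$ equal to any independent Haar vector. It then suffices to show that, conditionally on the pair $(A,x)$, the scalar $\norm{Bu}^2$ has the law of $\norm{By}^2$, where $y$ is a Haar unit vector independent of everything else. Granting this, $\norm{Bu}^2$ is independent of $(A,x)$, and hence of $\norm{Ax}^2$. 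The joint law of $\bigl(\norm{Ax}^2,\norm{Bu}^2\bigr)$ is then the product law, and~\eqref{eq:finite-radial-product} follows with independent factors.

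For the conditional claim, fix a realization of $(A,x)$. Then $u$ is a deterministic unit vector, and $B$ is independent of $(A,x)$, so $B$ keeps its law under this conditioning. Choose a deterministic unitary $V$ with $Ve_1=u$. Right invariance gives $BV\stackrel{d}{=}B$, so
\begin{align*}
  \norm{Bu}^2=\norm{BVe_1}^2\stackrel{d}{=}\norm{Be_1}^2 ,
\end{align*}
and the conditional law does not depend on $u$.

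It remains to identify the law of $\norm{Be_1}^2$ with that of $\norm{By}^2$. Take $W$ Haar on $\U(N)$, independent of $B$. By the same right invariance, conditioning on $W$ gives $\norm{Be_1}^2\stackrel{d}{=}\norm{BWe_1}^2$. The vector $y=We_1$ is Haar on $\Sph^{2N-1}$ and independent of $B$, so $\norm{BWe_1}^2\stackrel{d}{=}\norm{By}^2$. This gives the claim.

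Only the right invariance of $B$ and the independence of $B$ from $(A,x)$ are used. The invariance of $A$ is not needed, since $\norm{Ax}^2$ appears as it is on both sides.

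The delicate step is measurability: we condition on $(A,x)$ and use a unit vector $u$ that depends on it. We handle this cleanly with a Fubini argument. For bounded measurable $f$ and $g$, compute $\E\bigl[f(\norm{Ax}^2)\,g(\norm{Bu}^2)\bigr]$ by first integrating over $B$ with $(A,x)$ held fixed. The inner integral equals the constant $\E\, g(\norm{By}^2)$, which is the required factorization.
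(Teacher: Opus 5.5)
Your proof is correct, but it uses the other half of the bi-unitary invariance from the paper's proof.

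\textbf{The paper's route.} It uses the left invariance of $A$. Since $A$ is independent of $x$, the vector $Ax$ is isotropic, so its direction is Haar and independent of its norm. This gives $(B,Ax)\stackrel{d}{=}(B,\norm{Ax}\,y)$, and the identity follows by applying $B$. In this version $B$ may be any random matrix independent of $(A,x)$, as the paper notes in its remark after the proof.

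\textbf{Your route.} You leave $\norm{Ax}^2$ untouched and use the right invariance of $B$ to show that $\norm{Bu}^2$ has the same law for every fixed unit vector $u$, so the direction of $Ax$ plays no role. Consequently $A$ and $x$ may be arbitrary, provided $B$ is independent of them. In fact $y$ need only be a unit vector independent of $B$, since $\norm{By}^2\stackrel{d}{=}\norm{Be_1}^2$ for any such $y$.

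\textbf{What each buys.} Your argument avoids the polar-decomposition fact for isotropic vectors, namely that the direction is uniform and independent of the norm, including on the degenerate event $Ax=0$. It also makes the conditioning rigorous through an explicit Fubini computation at a fixed vector. The paper's argument is shorter and gives the complementary one-sided statement, with $B$ arbitrary. Taken together, the two proofs show that the identity holds under either one-sided invariance hypothesis, as long as $B$ is independent of $(A,x)$.
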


\begin{proof}
Since $A$ is left-unitarily invariant and independent of $x$, the vector $Ax$ is unitarily invariant. Conditionally on its norm, it is uniformly distributed on the sphere of that radius. Since $B$ is independent of $(A,x)$, we may therefore write
$(B,Ax)\stackrel{d}{=}(B,\norm{Ax}y)$, where $y$ is Haar distributed and independent of $(B,\norm{Ax})$. Applying $B$ and taking squared norms proves the stated identity, and the two factors on the right are independent.
\end{proof}

\begin{remark}[one-sided invariance suffices]
The proof only uses left-unitary invariance of $A$. In fact, $x$ need not be Haar distributed, and $B$ may be any random matrix independent of $(A,x)$.
We nevertheless retain bi-unitary invariance in the statement, in accordance with the assumptions used in the subsequent results.
\end{remark}

\section{Spherical integrals and the \texorpdfstring{$S$}{S}-transform}
\label{sec:radial-s-transform}

\subsection{Laplace asymptotics}

We now explain how the preceding identity in Proposition~\ref{prop:finite-radial-identity} gives the multiplicativity of the $S$-transform.
Whenever the following limit exists on an interval about $0$, we denote it by
\begin{align}\label{eq:normalized-radial-log-laplace}
  \Phi_A(t)=\lim_{N\to\infty}\frac1N\log J_N(t;A)=\lim_{N\to\infty}\frac1N\log\E_{A_N}I_N(t;A_N^*A_N).
\end{align}
If $A_N$ is deterministic, then $J_N(t;A)=I_N(t;A_N^*A_N)$.
Proposition~\ref{prop:loewner-squeeze} below supplies sufficient hypotheses for this annealed limit to exist and then gives
\begin{align*}
  \Phi_A(t)=\int_0^t R_A(s)\,\dd s,
\end{align*}
where $R_A$ is the $R$-transform of the limit of the empirical spectral measure of $A_N^*A_N$.
For deterministic spectra this is the local form of Theorem~\ref{thm:rank-one-hciz}.

We shall use the following local form of the Laplace principle. Here ``local'' refers to a neighborhood of $0$ for the log-Laplace parameter and the corresponding range of means; see, for example, \cite{DemboZeitouni2010}. We include the short proof for completeness.

\begin{proposition}
\label{prop:local-laplace}
Let $Q_N$ be random variables with values in $[0,C]$.
Assume that there is an open interval $U\subset\R$ containing $0$ such that
\begin{align}\label{eq:local-laplace-scaled-cgf}
  L_N(s)=\frac1N\log\E e^{NsQ_N},
\end{align}
converges locally uniformly on $U$ to a $C^2$ strictly convex function $L$. Let
\begin{align*}
  \mathcal I(x)=\sup_{s\in U}\{sx-L(s)\},
\end{align*}
be the local Legendre transform.
Let $J$ be a compact subinterval of $L'(U)$. If $f\colon[0,C]\to\R$ is continuous and the function $x\mapsto f(x)-\mathcal I(x)$ attains its maximum at a point of $J$, then
\begin{align}\label{eq:local-laplace-variational-limit}
  \lim_{N\to\infty}\frac1N\log\E e^{Nf(Q_N)}
  =\sup_{x\in[0,C]}\{f(x)-\mathcal I(x)\}.
\end{align}

The same conclusion holds if $Nf(Q_N)$ is replaced by $Nf(Q_N)+N\varepsilon_N(Q_N)$, where $\sup_{x\in[0,C]}|\varepsilon_N(x)|\to0$.
In particular, let $f_t\colon[0,C]\to\R$ be a family of continuous functions indexed by a real parameter $t$ near $0$. Suppose that $L''(0)>0$, that $J$ contains $L'(0)$ in its interior, and that $f_t\to f_0$ uniformly on $[0,C]$ as $t\to0$, where $f_0$ is constant.
Then every maximizer of $f_t-\mathcal I$ lies in $J$ for all sufficiently small $|t|$.
If, in addition, $L$ and $(t,x)\mapsto f_t(x)$ are analytic near $(0,L'(0))$, the maximizer in $J$ is unique and depends analytically on $t$ after a further restriction of the parameter interval.
\end{proposition}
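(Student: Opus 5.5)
The plan is the standard tilting argument behind Varadhan's lemma, done locally. Two facts about the local Legendre transform come first. Since $L$ is $C^2$ and strictly convex on $U$, $L'$ is a strictly increasing homeomorphism of $U$ onto the open interval $L'(U)$. For $x=L'(s_x)$ the supremum defining $\mathcal I(x)$ is attained at $s_x$, so $\mathcal I(x)=s_xx-L(s_x)$, and $\mathcal I$ is continuous and convex on $L'(U)$. Outside $L'(U)$, $\mathcal I$ is still convex and lower semicontinuous, possibly with larger values. The proof then has an upper bound and a matching lower bound.

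\textbf{Upper bound.} The goal is a local large deviation upper bound $\limsup_N\frac1N\log\PP(Q_N\in F)\le-\inf_F\mathcal I$ for closed $F$. For a half-line $[a,C]$, the exponential Chebyshev inequality gives $\PP(Q_N\ge a)\le e^{-N(sa-L_N(s))}$ for each $s\in U$ with $s\ge0$; letting $N\to\infty$ and optimizing over $s$ yields the bound. The half-line $[0,a]$ is handled the same way with $s\le0$. Cover $[0,C]$ by finitely many small intervals $K_j$ of width $\eta$. On each $K_j$, bound $f$ by $\max_{K_j}f$ and use Chebyshev with $s=s_{x_j}$ at a point $x_j\in K_j$: for $x\in K_j$ one has $sx-L(s)\ge \mathcal I(x_j)-|s|\eta$. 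This gives
\begin{align*}
\limsup_N\frac1N\log\E e^{Nf(Q_N)}\le\max_j\bigl(\max_{K_j}f-\textstyle\inf_{K_j}\mathcal I\bigr)+o_\eta(1).
\end{align*}
The right-hand side tends to $\sup_{[0,C]}(f-\mathcal I)$ as $\eta\to0$, by uniform continuity of $f$ and lower semicontinuity of $\mathcal I$. Near points outside $L'(U)$, only the weaker inequality $\mathcal I(x)\ge sx-L(s)$ for some $s\in U$ is needed; since the supremum is over $U$ only, the Chebyshev bound is exactly $\mathcal I$ up to $\eta$.

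\textbf{Lower bound.} Let $x^*\in J\subset L'(U)$ be the maximizer, and let $s^*=s_{x^*}$. Define the tilted laws $\dd\PP_N^*=e^{Ns^*Q_N-NL_N(s^*)}\dd\PP$. Their scaled cumulant generating functions are $L_N(s^*+\cdot)-L_N(s^*)$, which converge on a neighborhood of $0$ to a function with derivative $L'(s^*)=x^*$ at $0$ and strictly convex there. Applying the upper bound above to $\PP_N^*$ with $F=\{|x-x^*|\ge\delta\}$ shows $\PP_N^*(|Q_N-x^*|<\delta)\to1$. Hence
\begin{align*}
\E e^{Nf(Q_N)}\ge e^{N(\min_{|x-x^*|<\delta}f-s^*x^*-s^*\delta+L_N(s^*))}\,\PP_N^*(|Q_N-x^*|<\delta),
\end{align*}
which gives $\liminf_N\ge f(x^*)-\mathcal I(x^*)-o_\delta(1)$. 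This is the step that uses the hypothesis that the maximizer lies in $J$; I expect it to be the main obstacle, since tilting is only available inside $L'(U)$. The perturbation $\varepsilon_N$ shifts both bounds by at most $\sup|\varepsilon_N|\to0$.

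\textbf{Supplementary claims.} Since $L''(0)>0$, $\mathcal I$ is $C^2$ near $m=L'(0)$ with $\mathcal I(m)=0$ and $\mathcal I''(m)=1/L''(0)>0$. Also $\mathcal I\ge0$, with a positive lower bound $c_J$ on the complement of the interior of $J$; this uses that $m$ is interior to $J$, and follows from convexity and $\mathcal I\ge sx-L(s)$ for small $s$ of the appropriate sign. Because $f_t-f_0$ tends to $0$ uniformly and $f_0$ is constant, for small $|t|$ we have $(f_t-\mathcal I)(m)\ge f_0-o(1)$, while $f_t-\mathcal I\le f_0+o(1)-c_J$ off $J$. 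So every maximizer lies in $J$. Under analyticity, $\mathcal I$ is analytic near $m$ as the Legendre transform of an analytic function with $L''\neq0$. The maximizer solves $\partial_xf_t(x)=\mathcal I'(x)$, and at $t=0$ this reads $\mathcal I'(x)=0$ with the unique solution $x=m$. Since $\partial_x^2(f_0-\mathcal I)(m)=-\mathcal I''(m)\neq0$, the analytic implicit function theorem gives a unique analytic solution $x(t)$ near $m$. Strict concavity of $f_t-\mathcal I$ near $m$ for small $t$, combined with the localization just proved (shrinking $J$ if needed), shows that this is the unique maximizer in $J$.
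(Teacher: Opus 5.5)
Your lower bound (tilt at $s^*=(L')^{-1}(x^*)$, two-sided exponential Chebyshev under the tilted law, then undo the tilt) matches Step~1 of the paper. So do the perturbation clause, the localization via a positive lower bound for $\mathcal I$ off the interior of $J$, and the implicit-function argument for the analytic maximizer (Step~3). Your use of convexity and the complement of the interior of $J$ is in fact slightly more careful than the paper's appeal to compactness of $[0,C]\setminus J$.

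The real difference is the upper bound. The paper does not pass through a large-deviation upper bound for $Q_N$. It observes that, because $\mathcal I$ is a supremum over $s\in U$, the sets $\{x\in[0,C]: f(x)-sx+L(s)<\mathcal M_f+\eta\}$ form an open cover of $[0,C]$. A finite subcover $s_1,\dots,s_m$ then gives $\E e^{Nf(Q_N)}\le\sum_i e^{N(\mathcal M_f+\eta+L_N(s_i)-L(s_i))}$. This needs no information on where the supremum defining $\mathcal I$ is attained. It also handles points outside $L'(U)$, including those with $\mathcal I=+\infty$, automatically.

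Your partition into width-$\eta$ intervals yields a local LDP upper bound that you can reuse for the tilted law. As written, however, the error $|s_{x_j}|\eta$ is not uniformly $o_\eta(1)$. If $U$ is unbounded, $s_{x_j}$ can blow up as $x_j$ approaches an endpoint of $L'(U)$, and outside $L'(U)$ there is no $s_{x_j}$ at all. The fix is to take $x_j$ to be the endpoint of $K_j$ nearer to $m=L'(0)$. Then use $s_{x_j}$, or, beyond $L'(U)$, any $s\in U$ of the same sign, optimized after $N\to\infty$. With this choice $\inf_{x\in K_j}sx=sx_j$ exactly. Monotonicity of the convex function $\mathcal I$ on each side of its zero $m$ then gives $\limsup\le\max_{K_j}f-\inf_{K_j}\mathcal I$ with no error term.

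Also, in your lower-bound display, $-s^*\delta$ should be $-|s^*|\delta$.
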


\begin{proof}

\medskip\noindent\emph{Step 1: a tilted local lower bound.}
Let
$x=L'(s)$ with $s\in U$.  Under the exponentially tilted law
\begin{align*}
  \frac{e^{NsQ_N}}{\E e^{NsQ_N}}\,\dd\mathbb P,
\end{align*}
the log-Laplace transform of $Q_N$ is
\begin{align*}
  r\mapsto L_N(s+r)-L_N(s).
\end{align*}
By local uniform convergence, this converges near $0$ to $L(s+r)-L(s)$, whose derivative at $0$ is $x$.
Here the normalization is precisely the one fixed in \eqref{eq:local-laplace-scaled-cgf}.

Fix $r>0$ small enough that $s-r,s+r\in U$. Under the tilted law, exponential Chebyshev inequalities give
\begin{align*}
  \mathbb P_{\mathrm{tilt}}(Q_N\ge x+\varepsilon)
  &\le\exp\{N(L_N(s+r)-L_N(s)-r(x+\varepsilon))\},\\
  \mathbb P_{\mathrm{tilt}}(Q_N\le x-\varepsilon)
  &\le\exp\{N(L_N(s-r)-L_N(s)+r(x-\varepsilon))\}.
\end{align*}
Since $x=L'(s)$, both limiting exponents are strictly negative for $r>0$ sufficiently small. Local uniform convergence of $L_N$ therefore gives some $a>0$ such that the tilted probability of $|Q_N-x|\ge\varepsilon$ is at most $2e^{-aN}$ for all large $N$.
Returning to the original law,
\begin{align*}
  \mathbb P(|Q_N-x|<\varepsilon)
  \ge\exp\{NL_N(s)-Nsx-N|s|\varepsilon\}(1-2e^{-aN}),
\end{align*}
and hence
\begin{align*}
  \liminf_{N\to\infty}\frac1N\log\mathbb P(|Q_N-x|<\varepsilon)
  \ge-sx+L(s)-|s|\varepsilon.
\end{align*}
Since $x=L'(s)$, convexity gives $\mathcal I(x)=sx-L(s)$.
Letting $\varepsilon\downarrow0$ proves the local lower bound $-\mathcal I(x)$ for $x\in L'(U)$.

\medskip\noindent\emph{Step 2: a variational formula.}
Set $\mathcal M_f=\sup_{x\in[0,C]}\{f(x)-\mathcal I(x)\}$ and fix $\eta>0$. By the definition of $\mathcal I$ and continuity of $f$, the sets
\begin{align*}
  \{x\in[0,C]:f(x)-sx+L(s)<\mathcal M_f+\eta\},\qquad s\in U,
\end{align*}
form an open cover of $[0,C]$. Choose $s_1,\ldots,s_m\in U$ giving a finite subcover. Then
\begin{align*}
  \E e^{Nf(Q_N)}\le\sum_{i=1}^m\exp\{N(\mathcal M_f+\eta+L_N(s_i)-L(s_i))\}.
\end{align*}
Since each $s_i$ is fixed, convergence of $L_N$ gives
\begin{align*}
  \limsup_{N\to\infty}\frac1N\log\E e^{Nf(Q_N)}\le\mathcal M_f+\eta.
\end{align*}
Letting $\eta\downarrow0$ gives the upper bound. At a maximizer $x_*\in J$, Step 1 and continuity of $f$ give
\begin{align*}
  \liminf_{N\to\infty}\frac1N\log\E e^{Nf(Q_N)}\ge f(x_*)-\mathcal I(x_*)=\mathcal M_f.
\end{align*}
This proves~\eqref{eq:local-laplace-variational-limit}. The perturbation clause follows by squeezing between $f-\|\varepsilon_N\|_\infty$ and $f+\|\varepsilon_N\|_\infty$.

\medskip\noindent\emph{Step 3: uniform families and the analytic maximizer.}
For the family assertion, $x_0=L'(0)$ is the unique zero of $\mathcal I$. Lower semicontinuity and compactness give $\inf_{[0,C]\setminus J}\mathcal I>0$.
Uniform convergence of $f_t$ to the constant $f_0$ therefore forces every maximizer into $J$ for small $|t|$.
In the analytic case, local Legendre duality gives $\mathcal I''(x_0)=1/L''(0)>0$. Since $f_0$ is constant, $\partial_x^2(f_0-\mathcal I)(x_0)<0$. The analytic implicit-function theorem applied to $\partial_x(f_t-\mathcal I)=0$ gives a unique analytic critical point near $x_0$, which remains a local maximizer for small $|t|$. After shrinking $J$ if necessary, the family assertion makes it the unique global maximizer.
\end{proof}

\begin{remark}
Pointwise convergence of the convex functions $L_N$ on $U$ already implies local uniform convergence on compact subintervals of $U$.
\end{remark}

\subsection{Spectral concentration}

We first give a sufficient condition for averaging over the random spectrum to preserve the limiting normalized logarithm of a rank-one spherical integral. We then show that the required spectral concentration is preserved under products of independent bi-unitarily invariant matrices with uniformly bounded norms.

\begin{proposition}[annealed rank-one spherical integrals]
\label{prop:loewner-squeeze}
Let $H_N$ be random positive matrices with $\norm{H_N}\le C$ almost surely, and let $\nu$ be a compactly supported probability measure on $[0,C]$.
Let $x_N$ be a Haar unit vector independent of $H_N$.
Assume that, for every $\varepsilon>0$, the empirical spectral measure of $H_N$ satisfies
\begin{align}\label{eq:loewner-squeeze-spectral-concentration}
  \limsup_{N\to\infty} \frac1N\log \mathbb P\bigl(d_{\mathrm L}(\mu_{H_N},\nu)>\varepsilon\bigr)
  =-\infty,
\end{align}
where $d_{\mathrm L}$ is defined in \eqref{eq:levy-distance-definition}.
Then
\begin{align}\label{eq:loewner-squeeze-annealed-limit}
  \lim_{N\to\infty} \frac1N\log \E_{H_N,x_N}\exp\{Nt\,x_N^*H_Nx_N\}
  =\lim_{N\to\infty} \frac1N\log \E_{H_N} I_N (t; H_N)
  =\int_0^t R_\nu(s)\dd s,
\end{align}
for all real $|t|<1/(64C)$, with the trivial interpretation when $C=0$.
\end{proposition}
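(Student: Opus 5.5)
The plan is to sandwich the annealed average between the contributions of typical spectra, which concentrate near $\nu$ at superexponential speed, and of atypical spectra, which the bound $\norm{H_N}\le C$ keeps under control. The first equality in \eqref{eq:loewner-squeeze-annealed-limit} is just Fubini, since $x_N$ is independent of $H_N$. For the second, split $\E_{H_N}I_N(t;H_N)$ over the event $E_N=\{d_{\mathrm L}(\mu_{H_N},\nu)\le\varepsilon\}$ and its complement. On the complement we use the crude bound $I_N(t;H_N)\le e^{N|t|C}$. Together with \eqref{eq:loewner-squeeze-spectral-concentration}, this makes that contribution $e^{-NM}$ for any prescribed $M$, so it is negligible against the main term, which is at least $e^{-N|t|C}\,\mathbb P(E_N)$.

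It therefore suffices to prove the following: for every $\eta>0$ there is $\varepsilon>0$ such that, for all large $N$, every positive $H$ with $\norm H\le C$ and $d_{\mathrm L}(\mu_H,\nu)\le\varepsilon$ satisfies
\begin{align*}
\Bigl|\tfrac1N\log I_N(t;H)-\int_0^tR_\nu(s)\dd s\Bigr|\le\eta .
\end{align*}
This is a uniform version of Theorem~\ref{thm:rank-one-hciz}, and the hypothesis gives no edge convergence. We handle the edges by a Loewner squeeze. Since $x^*Hx$ is monotone in $H$ for the order $\le$, the map $H\mapsto I_N(t;H)$ is monotone: increasing for $t>0$ and decreasing for $t<0$. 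Given $H$ with Lévy-close spectrum, diagonalize it. Construct deterministic comparison matrices $H^-\le H\le H^+$ (same eigenbasis) by moving the eigenvalues with the quantile coupling, shifting each by about $\varepsilon$ and pushing an $O(\varepsilon)$ fraction to $0$ or to $C$. The spectra of $H^\pm$ are then fixed sequences, independent of the particular $H$. Their empirical measures converge to measures $\nu^\pm_\varepsilon$, close to $\nu$, obtained by shifting $\nu$ by $\pm\varepsilon$ and adding mass $O(\varepsilon)$ at $0$ or $C$. Their extreme eigenvalues converge to points in $[0,C]$. Theorem~\ref{thm:rank-one-hciz} then applies to $H^\pm$ with $\ell=0$ and $r=C$, giving limits $\int_0^tR_{\nu^\pm_\varepsilon}$. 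Monotonicity transfers these to bounds on $I_N(t;H)$ that are uniform over $E_N$.

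The main obstacle is twofold: showing that $t$ lies in $\mathcal D^{0,C}_{\nu^\pm_\varepsilon}$, and showing that $\int_0^tR_{\nu^\pm_\varepsilon}\to\int_0^tR_\nu$ as $\varepsilon\to0$, both uniformly for $|t|<1/(64C)$. The smallness of $t$ is used exactly here. For any probability measure $\rho$ on $[0,C]$, the function $G_\rho$ maps $(C,\infty)$ onto $(0,G_\rho(C^+))$ with $G_\rho(C^+)\ge 1/C$, and similarly on the negative side. So any $|t|$ below a fixed multiple of $1/C$ lies in the domain for every such $\rho$. Moreover, $K_\rho(t)$ stays at distance of order $1/|t|\gg C$ from $[0,C]$. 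Then $G_\rho$ and its inverse depend continuously on $\rho$ in the weak topology, uniformly on that region, since the integrands $1/(z-x)$ are uniformly Lipschitz in $x$ there. Hence $R_{\nu^\pm_\varepsilon}\to R_\nu$ uniformly on $[-|t|,|t|]$. I expect the generous constant $64$ to absorb the crude Lipschitz and inverse-function estimates, for instance a Rouché or derivative lower bound $|G'|\gtrsim 1/z^2$ on $|z|\ge 16C$ or so.

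Finally, combine the pieces. Let $N\to\infty$ to get $\limsup$ and $\liminf$ within $\eta(\varepsilon)$ of $\int_0^tR_\nu$, then let $\varepsilon\to0$. The case $C=0$ is trivial, since then $H_N=0$ and both sides vanish.
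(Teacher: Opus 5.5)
Your argument is correct, but it gets the key uniform estimate by a different route from the paper. The splitting of the expectation is the same in both.

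\emph{The paper's route.} It takes the uniformity directly from Guionnet--Ma\"{\i}da. It checks that for every $\rho$ on $[0,C]$ and $|t|<1/(64C)$ the point $K_\rho(t)$ lies at distance $>C$ from $[0,C]$. Hence the parameter condition of \cite[Theorem~2 and Lemma~14(3)]{GuionnetMaida2005} holds uniformly. Combined with the uniform equivalence of the Dudley and L\'evy metrics on $[0,C]$, this bounds $|N^{-1}\log I_N(t;H)-\int_0^tR_\nu(s)\dd s|$ uniformly over the L\'evy ball.

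\emph{Your route.} You use the edge-convergent rank-one theorem only as a black box for deterministic sequences. You transfer it to the whole L\'evy ball via Loewner monotonicity of $H\mapsto I_N(t;H)$ and the quantile comparison spectra
$\lambda_i^-=\max\{0,F_\nu^{-1}(i/N-\varepsilon)-\varepsilon\}\le\lambda_i(H)\le\lambda_i^+=\min\{C,F_\nu^{-1}(i/N+\varepsilon)+\varepsilon\}$.
These bounds do follow from the L\'evy inequalities, do not depend on $H$, and have convergent extremes. This manufactures the edge convergence the hypothesis lacks. It also replaces the appeal to the internals of \cite{GuionnetMaida2005} with weak continuity of $\rho\mapsto\int_0^tR_\rho(s)\dd s$ on measures supported in $[0,C]$.

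\emph{The continuity step is easier than you expect.} For $0<|s|<1/C$ the bounds $1/z\le G_\rho(z)\le 1/(z-C)$ give $K_\rho(s)\in[1/s,\,C+1/s]$, hence $R_\rho(s)\in[0,C]$. Pointwise convergence $R_{\rho_n}(s)\to R_\rho(s)$ then follows from compactness, uniform convergence of $G_{\rho_n}$ on that interval, and injectivity of $G_\rho$. Dominated convergence finishes. No Rouch\'e or Lipschitz estimate is needed.

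\emph{What each approach buys.} Given the rank-one theorem as recorded on $\mathcal D^{\ell,r}_\mu$, your argument works for all $|t|<1/C$, so the constant $64$ plays no role in it. The paper's proof is shorter and avoids the comparison construction. The price is relying on Guionnet--Ma\"{\i}da's quantitative lemma rather than only on the stated theorem.

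\emph{One small correction.} The rank-one theorem requires $\lambda_{\min}\to\ell$ and $\lambda_{\max}\to r$. So you cannot literally take $\ell=0$ for $H^+$ or $r=C$ for $H^-$; for instance, $\lambda_{\max}(H^-_N)$ is typically $<C$. Apply the theorem with the actual limits $\ell^\pm\ge0$ and $r^\pm\le C$, which lie outside or at the support edges of $\nu^\pm_\varepsilon$. Then note that $\mathcal D^{\ell^\pm,r^\pm}_{\nu^\pm_\varepsilon}\supseteq\mathcal D^{0,C}_{\nu^\pm_\varepsilon}\supseteq(-1/C,1/C)$.
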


\begin{proof}
The proposition is a direct consequence of \cite{GuionnetMaida2005}, combined with the assumed superexponential spectral concentration.
The cases $C=0$ and $t=0$ are immediate. Fix $C>0$ and $0<|t|<1/(64C)$.
For every probability measure $\rho$ supported in $[0,C]$, the bounds $G_\rho(2C)\ge1/(2C)$ and $G_\rho(-C)\le-1/(2C)$ ensure that $K_\rho(t)$ is at distance greater than $C$ from $[0,C]$, so the parameter condition in \cite[Theorem~2 and Lemma~14(3)]{GuionnetMaida2005} holds uniformly with $\beta=2$ and $\eta=C$.
Since the Dudley and L\'evy distances are uniformly equivalent on probability measures supported in $[0,C]$, these results imply that, for each $a>0$, there exists $\varepsilon>0$ such that $\left|N^{-1}\log I_N(t;H)-\int_0^tR_\nu(s)\dd s\right|\le a$ for all sufficiently large $N$ and all $0\le H\le C\Id_N$ with $d_{\mathrm L}(\mu_H,\nu)\le\varepsilon$.

Put $p_N=\PP(d_{\mathrm L}(\mu_{H_N},\nu)>\varepsilon)$. Splitting the expectation and using $I_N(t;H_N)\le e^{N|t|C}$ gives
\begin{align*}
  (1-p_N)e^{N\left(\int_0^tR_\nu(s)\dd s-a\right)}\le\E I_N(t;H_N)\le e^{N\left(\int_0^tR_\nu(s)\dd s+a\right)}+p_Ne^{N|t|C}.
\end{align*}
By~\eqref{eq:loewner-squeeze-spectral-concentration}, the last term is superexponentially small at speed $N$. Taking normalized logarithmic limits and then letting $a\downarrow0$ proves the claim.
\end{proof}

The asymptotic exchange of spectral averaging and the logarithm also arises in the spherical-integral approach to additive free convolution. For deterministic Hermitian matrices $H_N,K_N$ and Haar $U_N\in\U(N)$, Haar invariance gives, for every real $t$,
\begin{align*}
  \E_{U_N}I_N(t;H_N+U_NK_NU_N^*)=I_N(t;H_N)I_N(t;K_N).
\end{align*}
This is the rank-one analogue of the factorization used in \cite[Section~2]{ZinnJustin1999AddMult}. Identifying the normalized logarithm of this annealed integral with the free energy of a typical sum requires an asymptotic exchange of integration and logarithm; see \cite[Section~6.2]{GuionnetMaida2005} for the small-parameter argument under the stated hypotheses and Section~1.1 of the same reference for the discussion of its unitary extension.

\begin{proposition}[spectral concentration for matrix products]\label{prop:product-levy-concentration}
Let $A_N$ and $B_N$ be independent bi-unitarily invariant matrices with $\norm{A_N},\norm{B_N}\le C$ almost surely, where $C<\infty$.
Let $\mu_A,\mu_B$ be compactly supported probability measures on $[0,C^2]$. Assume that, for every $\varepsilon>0$,
\begin{align}\label{eq:product-input-spectral-concentration}
  \limsup_{N\to\infty}\frac1N\log
  \mathbb P\bigl(d_{\mathrm L}(\mu_{A_N^*A_N},\mu_A)>\varepsilon\bigr)
  &=-\infty,\\
  \limsup_{N\to\infty}\frac1N\log
  \mathbb P\bigl(d_{\mathrm L}(\mu_{B_N^*B_N},\mu_B)>\varepsilon\bigr)
  &=-\infty.
\end{align}
Then the empirical spectral measure of $(B_NA_N)^*(B_NA_N)$ satisfies
\begin{align}\label{eq:product-output-spectral-concentration}
  \limsup_{N\to\infty}
  \frac1N\log
  \mathbb P\bigl(
  d_{\mathrm L}(\mu_{(B_NA_N)^*(B_NA_N)},\mu_A\boxtimes\mu_B)>\varepsilon
  \bigr)
  =-\infty,
\end{align}
for every $\varepsilon>0$. The same conclusion holds with $A_NB_N$ in place of $B_NA_N$.
\end{proposition}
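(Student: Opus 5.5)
The plan is to condition on the singular values, reduce to a deterministic-spectrum statement about a Haar-rotated product, and prove that statement with concentration of measure on $\U(N)$ at speed $N^2$.

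\emph{Reduction.} Write $A_N\stackrel{d}{=}U D_A V^*$ and $B_N\stackrel{d}{=}W D_B Z^*$ with $U,V,W,Z$ independent Haar unitaries, independent of the diagonal singular-value matrices $D_A,D_B$. Then
\begin{align*}
(B_NA_N)^*(B_NA_N)=V D_A U^*Z D_B^2 Z^*U D_A V^*.
\end{align*}
This is unitarily equivalent to $D_A O D_B^2 O^* D_A$ with $O=Z^*U$. The unitary $O$ is Haar distributed and independent of $(D_A,D_B)$. Its nonzero spectrum equals that of $P^{1/2}OQO^*P^{1/2}$, and also that of $POQO^*$ in the sense of moments, where $P=D_A^2$ and $Q=D_B^2$. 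The matrices $P,Q$ have spectra $\mu_{A_N^*A_N}$ and $\mu_{B_N^*B_N}$.

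The case $A_NB_N$ is identical with the roles of the two factors exchanged: $(A_NB_N)^*(A_NB_N)$ is unitarily equivalent to $D_BO'D_A^2O'^*D_B$, and $\mu_B\boxtimes\mu_A=\mu_A\boxtimes\mu_B$.

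By \eqref{eq:product-input-spectral-concentration}, the event that both input spectra are $\eta$-close to $\mu_A,\mu_B$ in L\'evy distance has superexponentially small complement. It therefore suffices to prove the following: for every $\varepsilon>0$ there are $\eta>0$ and $c>0$ such that
\begin{align*}
\PP_O\bigl(d_{\mathrm L}(\mu_{P^{1/2}OQO^*P^{1/2}},\mu_A\boxtimes\mu_B)>\varepsilon\bigr)\le e^{-cN^2}
\end{align*}
for all large $N$, uniformly over deterministic $0\le P,Q\le C^2$ with $d_{\mathrm L}(\mu_P,\mu_A)\le\eta$ and $d_{\mathrm L}(\mu_Q,\mu_B)\le\eta$.

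\emph{From L\'evy distance to moments.} All measures involved are supported in $[0,C^4]$. On this compact set weak convergence is equivalent to convergence of moments. By compactness there are $n_0$ and $\delta>0$ such that
\begin{align*}
|m_n(\rho)-m_n(\mu_A\boxtimes\mu_B)|\le\delta \ \text{ for } n\le n_0 \quad\Longrightarrow\quad d_{\mathrm L}(\rho,\mu_A\boxtimes\mu_B)\le\varepsilon.
\end{align*}
Likewise, $\eta$-closeness of $\mu_P,\mu_Q$ controls their first $2n_0$ moments to within any prescribed accuracy, after shrinking $\eta$. So fix $n\le n_0$ and set $f(O)=\tr_N\bigl((POQO^*)^n\bigr)$.

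\emph{Concentration.} Expanding $f(O)-f(O')$ telescopically and using $|\tr_N X|\le N^{-1/2}\norm{X}_{\mathrm{HS}}$, one gets
\begin{align*}
|f(O)-f(O')|\le 2nC^{4n}N^{-1/2}\norm{O-O'}_{\mathrm{HS}}.
\end{align*}
The Gromov--Milman log-Sobolev concentration on $\U(N)$ then gives
\begin{align*}
\PP\bigl(|f-\E f|>\delta/2\bigr)\le 2\exp\{-cN^2\delta^2/(n^2C^{8n})\}.
\end{align*}
For $\U(N)$ itself, first reduce to $\mathrm{SU}(N)$: $f$ is invariant under $O\mapsto e^{i\theta}O$.

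\emph{Expectation.} It remains to show that $\E_O f(O)$ is within $\delta/2$ of $m_n(\mu_A\boxtimes\mu_B)$, uniformly over the admissible $P,Q$. By the Weingarten calculus,
\begin{align*}
\E_O f(O)=\sum_{\sigma,\tau\in S_n}\mathrm{Wg}_N(\sigma\tau^{-1})\,N^{\#\text{cycles}-1}\,\prod \tr_N(P^{j})\prod\tr_N(Q^{k}).
\end{align*}
This is a fixed polynomial, depending on $N$ and $n$, in the first $n$ moments of $\mu_P$ and $\mu_Q$. As $N\to\infty$ its coefficients converge to those of the free moment--cumulant formula for $\tau((xy)^n)$ with $x,y$ free. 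Because the moments are bounded by $C^{2n}$, this convergence is uniform over all admissible $P,Q$. The limiting polynomial is continuous, so evaluating it at moments close to those of $\mu_A,\mu_B$ gives a value close to $m_n(\mu_A\boxtimes\mu_B)$.

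\emph{Main obstacle.} I expect the last step to be the main difficulty: making asymptotic freeness of $P$ and $OQO^*$ quantitative and \emph{uniform} in the deterministic spectra. I would handle it through the explicit genus expansion of $\mathrm{Wg}_N$, which gives an $O(N^{-2})$ error with constants depending only on $n$ and $C$. The concentration step is routine once the Lipschitz bound is in place.

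A union bound over $n\le n_0$, added to the superexponentially small probability of the bad input event, then yields \eqref{eq:product-output-spectral-concentration}.
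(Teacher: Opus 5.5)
Your proposal is correct and follows the paper's proof essentially step by step. The shared steps are: conditioning on the singular values to reduce to a Haar relative rotation $D_AW^*D_B^2WD_A$; a uniform $O(N^{-2})$ Weingarten estimate for the expected moments; the $2nC^{4n}N^{-1/2}$ Lipschitz bound with Haar concentration at speed $N^2$; a compactness argument turning finitely many moments into L\'evy distance; and a union bound with the superexponential input estimates. The only cosmetic difference is that you compare moments directly with those of $\mu_A\boxtimes\mu_B$ via continuity of the universal moment polynomial, whereas the paper centers at $\mu_{A_N^*A_N}\boxtimes\mu_{B_N^*B_N}$ and then uses continuity of $\boxtimes$ and the triangle inequality.
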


\begin{proof}
The case $C=0$ is immediate, so assume $C>0$.
Condition on the singular values. By independence and bi-unitary invariance, we may realize the conditional law of $(A_N,B_N)$ so that
\begin{align*}
  A_N=U_{A,N}D_{A,N}V_{A,N}^*,
  \qquad
  B_N=U_{B,N}D_{B,N}V_{B,N}^*,
\end{align*}
where, conditionally on $D_{A,N}$ and $D_{B,N}$, the four unitary matrices are independent Haar unitaries.
Under this realization, the matrix
$W_N:=V_{B,N}^*U_{A,N}$
is Haar and
\begin{align*}
  (B_NA_N)^*(B_NA_N)
  = V_{A,N}H_NV_{A,N}^*,
  \qquad
  H_N:=D_{A,N}W_N^*D_{B,N}^2W_ND_{A,N}.
\end{align*}
Since
$\mu_{D_{A,N}^2}=\mu_{A_N^*A_N}$ and $\mu_{D_{B,N}^2}=\mu_{B_N^*B_N}$, the conditioned input measures are precisely the empirical spectral measures of $A_N^*A_N$ and $B_N^*B_N$ appearing in the statement.

Fix $\varepsilon>0$. The input measures are supported in $[0,C^2]$, and the empirical spectral measures of $(B_NA_N)^*(B_NA_N)$ are supported in $[0,C^4]$.
On a fixed compact interval, weak convergence is equivalent to convergence of all moments.
Moreover, each moment of $\sigma\boxtimes\tau$ is a universal polynomial in finitely many moments of $\sigma$ and $\tau$.
Compactness of the spaces of probability measures on these intervals therefore gives $\delta>0$, an integer $K\ge1$, and $\eta>0$ such that for $\sigma,\tau$ supported in $[0,C^2]$,
\begin{align}\label{eq:free-product-continuity-choice}
  d_{\mathrm L}(\sigma,\mu_A)\le\delta,\quad
  d_{\mathrm L}(\tau,\mu_B)\le\delta
  \quad\Longrightarrow\quad
  d_{\mathrm L}(\sigma\boxtimes\tau,\mu_A\boxtimes\mu_B)
  <\frac{\varepsilon}{2},
\end{align}
and, for any $\rho,\rho'$ supported in $[0,C^4]$,
\begin{align}\label{eq:finitely-many-moments-control-levy}
  \max_{1\le j\le K}|m_j(\rho)-m_j(\rho')|<\eta
  \quad\Longrightarrow\quad
  d_{\mathrm L}(\rho,\rho')<\frac{\varepsilon}{2}.
\end{align}
If~\eqref{eq:finitely-many-moments-control-levy} failed for every $K,\eta$, compactness would give two limiting measures with identical moments but L\'evy distance at least $\varepsilon/2$, contradicting moment determinacy on $[0,C^4]$.

For every fixed $j$, the finite-$N$ unitary Weingarten formula
\cite[Corollaries~2.4 and~2.7]{CollinsSniady2006Haar} gives the uniform estimate
\begin{align}\label{eq:uniform-haar-moment-centering}
  \sup_{\substack{D,E\geq0\ \mathrm{diagonal}\\ \norm D,\norm E\leq C}}
  \left|
  \E_W\tr_N\bigl((DW^*E^2WD)^j\bigr) -m_j(\mu_{D^2}\boxtimes\mu_{E^2})
  \right|
  \leq \frac{C_{j,C}}{N^2},
\end{align}
for all sufficiently large $N$.
To check the normalization and uniformity, put $\gamma=(1,2,\ldots,j)$ and write $\tr_\pi(M)=\prod_{c\text{ cycle of }\pi}\tr_N(M^{|c|})$. The unitary Weingarten formula gives
\begin{align*}
  \E_W\tr_N\left((DW^*E^2WD)^j\right)
  =\sum_{\sigma,\tau\in\mathfrak S_j}N^{\#\sigma+\#(\tau^{-1}\gamma)-1}\mathrm{Wg}_N(\sigma^{-1}\tau)\tr_\sigma(D^2)\tr_{\tau^{-1}\gamma}(E^2).
\end{align*}
Here $\#\pi$ is the number of cycles and $|\pi|=j-\#\pi$.
By \cite{CollinsSniady2006Haar},
$\mathrm{Wg}_N(\pi)=N^{-j-|\pi|}\left(\operatorname{Moeb}(\pi)+O_j(N^{-2})\right).$
The total exponent of $N$ is $j-1-|\sigma|-|\sigma^{-1}\tau|-|\tau^{-1}\gamma|$, a nonpositive even integer. It vanishes precisely for the geodesic chains $\mathrm{id}\le\sigma\le\tau\le\gamma$, identified with nested non-crossing partitions. Summing first over $\sigma\le\tau$ is M\"obius inversion of moments into free cumulants; the remaining sum is the moment formula for the product of free variables. Thus the terms of order $N^0$ give $m_j(\mu_{D^2}\boxtimes\mu_{E^2})$. Every normalized trace product is bounded by $C^{4j}$, and all other contributions are $O_{j,C}(N^{-2})$. This proves the uniform estimate without using $S$-multiplicativity.

It remains to control fluctuations about the centers in \eqref{eq:uniform-haar-moment-centering}. The telescoping identity for two $j$-th powers and the inequalities $\norm{D W^*E^2WD}\le C^4$ and $\norm{W^*E^2W-W'^*E^2W'}_{\mathrm{HS}} \le2C^2\norm{W-W'}_{\mathrm{HS}}$ show that
\begin{align}\label{eq:haar-moment-lipschitz}
  W\longmapsto\tr_N\bigl((DW^*E^2WD)^j\bigr)
  \quad\text{is}\quad
  2jC^{4j}N^{-1/2}\text{-Lipschitz}
\end{align}
for the Hilbert--Schmidt distance.
Haar concentration on $\U(N)$ for Lipschitz functions  \cite[Theorem~5.17]{Meckes2019} therefore yields
\begin{align}
  \mathbb P_W\left(\left|\tr_N\bigl((DW^*E^2WD)^j\bigr)-\E_W\tr_N\bigl((DW^*E^2WD)^j\bigr)\right|>a\right)
  \le2e^{-c_{j,C}a^2N^2}.
  \label{eq:haar-moment-concentration}
\end{align}
Here the $N^2$ exponent results from combining the unitary-group concentration scale with the $N^{-1/2}$ Lipschitz constant in \eqref{eq:haar-moment-lipschitz}; the numerical value of $c_{j,C}>0$ is immaterial.

Apply~\eqref{eq:uniform-haar-moment-centering}--\eqref{eq:haar-moment-concentration} for $1\le j\le K$, choose the centering error smaller than $\eta/2$, and use a union bound. In conjunction with~\eqref{eq:finitely-many-moments-control-levy}, this gives, uniformly over the conditioned singular values,
\begin{align}\label{eq:conditional-product-levy-concentration}
  \PP_{W_N}\left(
  d_{\mathrm L}\bigl(\mu_{H_N},\mu_{A_N^*A_N}\boxtimes\mu_{B_N^*B_N}\bigr)\ge\frac{\varepsilon}{2}\right)
  \le 2K e^{-cN^2},
\end{align}
for all sufficiently large $N$, with $c=c(\varepsilon,C)>0$.

On the event
\begin{align*}
  d_{\mathrm L}(\mu_{A_N^*A_N},\mu_A)\le\delta,\qquad
  d_{\mathrm L}(\mu_{B_N^*B_N},\mu_B)\le\delta
\end{align*}
the triangle inequality,~\eqref{eq:free-product-continuity-choice}, and~\eqref{eq:conditional-product-levy-concentration} imply
\begin{align*}
  d_{\mathrm L}(\mu_{H_N},\mu_A\boxtimes\mu_B)<\varepsilon
\end{align*}
except with conditional probability at most $2Ke^{-cN^2}$.
Consequently,
\begin{align}\label{eq:product-levy-final-bound}
  &\PP\left(
  d_{\mathrm L}\left(\mu_{(B_NA_N)^*(B_NA_N)},\mu_A\boxtimes\mu_B\right)>\varepsilon\right)
  \le
  \PP\left(d_{\mathrm L}(\mu_{A_N^*A_N},\mu_A)>\delta\right)
  +\PP\left(d_{\mathrm L}(\mu_{B_N^*B_N},\mu_B)>\delta\right)
  +2Ke^{-cN^2}.
\end{align}
The first two terms in~\eqref{eq:product-levy-final-bound} are superexponentially negligible at speed $N$ by \eqref{eq:product-input-spectral-concentration}, and the last is also superexponentially negligible at speed $N$. This proves the claim.
The argument for $A_NB_N$ is identical after interchanging $A_N$ and $B_N$; by the moment characterization in Definition~\ref{def:boxtimes} and traciality, $\mu_B\boxtimes\mu_A=\mu_A\boxtimes\mu_B$.
\end{proof}

\subsection{Multiplicativity of the \texorpdfstring{$S$}{S}-transform}

The next theorem gives a radial derivation of the classical multiplicative rule for the $S$-transform \cite[Theorem~2.6]{Voiculescu1987} from~\eqref{eq:finite-radial-product} and the preceding estimates. For deterministic singular values with uniformly bounded norms, its concentration hypotheses are verified in Corollary~\ref{cor:verify-product-hciz}.

\begin{theorem}[multiplicativity of the $S$-transform]
\label{thm:radial-s-transform}
Let $A_N,B_N\in M_N(\C)$ be independent bi-unitarily invariant random matrices with $\norm{A_N},\norm{B_N}\le C$ almost surely for some $C\geq1$.
Assume that the empirical spectral measures of $A_N^*A_N$ and $B_N^*B_N$
satisfy the exponential concentration hypothesis~\eqref{eq:loewner-squeeze-spectral-concentration} of Proposition~\ref{prop:loewner-squeeze}, with respective limits $\mu_A,\mu_B$, where $\mu_A,\mu_B$ are compactly supported probability measures on $[0,\infty)$ with non-zero first moments, and assume that $\mu_B$ is non-degenerate.
Set
\begin{align*}
  \mu_{BA}:=\mu_A\boxtimes\mu_B,
\end{align*}
with $\boxtimes$ defined operator-theoretically in Definition~\ref{def:boxtimes}.
The empirical spectral measure of $(B_NA_N)^*(B_NA_N)$ concentrates around $\mu_{BA}$ as in \eqref{eq:product-output-spectral-concentration}, and
\begin{align}\label{eq:s-transform-multiplicativity}
  S_{\mu_{BA}}(z)=S_{\mu_A}(z)S_{\mu_B}(z)
\end{align}
holds for all sufficiently small complex $z$.
\end{theorem}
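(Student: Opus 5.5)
The plan is to compute the annealed speed-$N$ log-Laplace transform of $\norm{B_NA_Nx}^2$ in two ways and compare them. First, Proposition~\ref{prop:product-levy-concentration} gives the concentration claim \eqref{eq:product-output-spectral-concentration} for $(B_NA_N)^*(B_NA_N)$. Its norm is at most $C^4$, so Proposition~\ref{prop:loewner-squeeze} applies to this matrix. For $|t|<1/(64C^4)$ it yields
\[
  \lim_{N\to\infty}\frac1N\log\E\, e^{Nt\norm{B_NA_Nx}^2}=\int_0^tR_{\mu_{BA}}(s)\dd s=:\Phi_{BA}(t).
\]
Similarly, $\Phi_A$ and $\Phi_B$ exist and equal $\int_0^tR_{\mu_A}$ and $\int_0^tR_{\mu_B}$ for small $t$. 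On the other hand, Proposition~\ref{prop:finite-radial-identity} rewrites the left side as $\E\exp\{Nt\,P_NQ_N\}$, where $P_N=\norm{A_Nx}^2$ and $Q_N=\norm{B_Ny}^2$ are independent and take values in $[0,C^2]$.

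Conditionally on $P_N$, the inner expectation is $\exp\{N\Lambda^{\mathrm a}_{B,N}(tP_N)\}$. The convergence $\Lambda^{\mathrm a}_{B,N}\to\Phi_B$ from Proposition~\ref{prop:loewner-squeeze} is pointwise on a neighbourhood of $0$. These are convex functions, so the convergence is locally uniform, and hence uniform over $tP_N$ with $|t|C^2$ small. We then apply Proposition~\ref{prop:local-laplace} to $Q_N:=P_N$ with $L=\Phi_A$, $f_t(x)=\Phi_B(tx)$, and the vanishing error $\varepsilon_N$ from that uniform convergence. The hypotheses needed are as follows. $\Phi_A$ is analytic near $0$. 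It is strictly convex with $\Phi_A''(0)=\kappa_2(\mu_A)=\Var(\mu_A)$, and we want this to be positive. The family $f_t\to f_0\equiv 0$ uniformly as $t\to0$. The result is
\[
  \Phi_{BA}(t)=\sup_x\{\Phi_B(tx)-\mathcal I_A(x)\},
\]
with a unique analytic maximizer $x(t)$ near $m_1(\mu_A)$. If $\mu_A$ is degenerate, $P_N$ concentrates at $m_1(\mu_A)$ and the sup is replaced by evaluation at that point. Alternatively, the symmetric representation with the roles of $A$ and $B$ exchanged can be used; this is where the non-degeneracy of $\mu_B$ enters. To arrange that, apply Proposition~\ref{prop:finite-radial-identity} with the Laplace step done over $Q_N$, conditioning instead on $Q_N$, and use $\Phi_B''(0)>0$. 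I would choose that ordering, so that only $\mu_B$ needs to be non-degenerate.

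The key step is the saddle-point calculation. With the roles arranged as above, we have $\Phi_{BA}(t)=\Phi_A(ty)-\mathcal I_B(y)$ at the optimizer $y=y(t)$. The critical-point equation is $tR_A(ty)=\mathcal I_B'(y)=:s$, where $y=\Phi_B'(s)=R_B(s)$. By the envelope theorem,
\[
  R_{BA}(t)=\Phi_{BA}'(t)=yR_A(ty).
\]
Now set $u=ty$. Then $s=tR_A(u)$ and $y=R_B(s)$. Multiply the envelope identity by $t$ and write $z:=tR_{BA}(t)$. This gives $z=uR_A(u)$, and also $z=t\,yR_A(u)=sR_B(s)$, using $s=tR_A(u)$ and $y=R_B(s)$. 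By \eqref{eq:s-r-coordinate-relation}, $S_{\mu_A}(z)=1/R_A(u)$, $S_{\mu_B}(z)=1/R_B(s)$ and $S_{\mu_{BA}}(z)=1/R_{BA}(t)$. The product is
\[
  \frac{1}{R_A(u)R_B(s)}=\frac{t}{s}\cdot\frac{1}{y}=\frac{t}{ty\,R_A(u)}=\frac1{R_{BA}(t)},
\]
which is \eqref{eq:s-transform-multiplicativity} for real $z$ near $0$. Since $m_1(\mu_{BA})=m_1(\mu_A)m_1(\mu_B)\neq0$, the map $t\mapsto z$ is a local analytic diffeomorphism. Hence the identity holds on a real interval around $0$, and it extends to small complex $z$ by analytic continuation.

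The main obstacle is the careful verification of the hypotheses of Proposition~\ref{prop:local-laplace}. One part is the uniform perturbation $\varepsilon_N$, which comes from the annealed convergence being uniform in the argument $tP_N$. The other is making sure the maximizer stays in the region where $\mathcal I$ is the honest local Legendre transform, so that the envelope and implicit-function steps are legitimate. The degenerate case of the first factor also needs the brief separate treatment described above.
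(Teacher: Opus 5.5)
Your proposal is correct and, in the ordering you finally adopt, it is essentially the paper's proof. That ordering integrates out $\norm{A_Nx}^2$ conditionally and then applies Proposition~\ref{prop:local-laplace} to $\norm{B_Ny}^2$ with $L=\Phi_B$ and $f_t(y)=\Phi_A(ty)$, and it proceeds exactly as the paper does: Step~1 via Propositions~\ref{prop:product-levy-concentration} and~\ref{prop:loewner-squeeze}, the variational formula, the coupled saddle equations with the envelope identity $R_{BA}(t)=R_A(\cdot)R_B(\cdot)$, and continuation from the real values $z=tR_{BA}(t)$ (your $s,u$ are the paper's $u,s$). The detour through the other ordering, and the separate treatment of a degenerate $\mu_A$, are unnecessary, since in the chosen ordering $\mu_A$ enters only through $f_t$ and needs only $m_1(\mu_A)\neq0$.
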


\begin{proof}
We derive the $S$-transform of $\mu_{BA}$ from~\eqref{eq:finite-radial-product}; no multiplicativity of the $S$-transform is used as an input.
Write $R_A=R_{\mu_A}$, $R_B=R_{\mu_B}$ and $R_{BA}=R_{\mu_{BA}}$, with analogous shorthand for the $S$-transforms.

\medskip\noindent\emph{Step 1: spherical limits.}
By Proposition~\ref{prop:product-levy-concentration}, the empirical spectral measure of $(B_NA_N)^*(B_NA_N)$ concentrates around $\mu_{BA}$ as in~\eqref{eq:product-output-spectral-concentration}. Applying Proposition~\ref{prop:loewner-squeeze} to $A_N^*A_N$, $B_N^*B_N$ and $(B_NA_N)^*(B_NA_N)$, whose norms are bounded by $C^2,C^2,C^4$, respectively, gives
\begin{align}\label{eq:three-radial-r-primitives}
  \Phi_A(t)&=\int_0^t R_A(s)\dd s,& \Phi_B(t)&=\int_0^t R_B(s)\dd s,& \Phi_{BA}(t)&=\int_0^t R_{BA}(s)\dd s
\end{align}
for $|t|<\frac1{64C^4}$.
These limiting transforms are analytic near $0$. Since $\mu_B$ is not a point mass, $\Phi_B''(0)=\Var_{\mu_B}(x)>0$, so $\Phi_B$ is strictly convex near $0$.

Let $x_N,y_N$ be independent Haar unit vectors, independent of $A_N,B_N$, and set $X_N=\norm{A_Nx_N}^2$ and $Y_N=\norm{B_Ny_N}^2$. These variables are independent, and $\norm{B_NA_Nx_N}^2\stackrel{d}{=}X_NY_N$ by~\eqref{eq:finite-radial-product}. Hence
\begin{align*}
  \Phi_{BA}(t)=\lim_{N\to\infty}\frac1N\log\E_{X_N,Y_N}e^{NtX_NY_N}.
\end{align*}

\medskip\noindent\emph{Step 2: the local Laplace principle.}
Since $0\le Y_N\le C^2$, the bound $|t|<\frac1{64C^4}$
ensures that $|ty|<1/(64C^2)$ for every $y\in[0,C^2]$.
For each fixed such $t$,
\begin{align*}
  \E_{X_N}e^{NtyX_N}=\exp\{N\Phi_A(ty)+No_N(1)\},
\end{align*}
uniformly for $y\in[0,C^2]$. The uniformity follows because the scaled log-Laplace transforms are convex and converge pointwise on an open interval, hence converge locally uniformly by the remark after Proposition~\ref{prop:local-laplace}.

Choose an open interval $U_B$ about $0$ on which $\Phi_B$ is strictly convex and a sufficiently small compact interval $J\subset\Phi_B'(U_B)$ containing $\bar y=\Phi_B'(0)$ in its interior. The functions $f_t(y)=\Phi_A(ty)$ converge uniformly to $0$ on $[0,C^2]$. Conditioning on $Y_N$, we apply Proposition~\ref{prop:local-laplace} with $Q_N=Y_N$, $L=\Phi_B$ and $f=f_t$; its family clause localizes the global maximizers in $J$, and its perturbation clause absorbs the uniform $o_N(1)$. Consequently, for all sufficiently small real $t$,
\begin{align}\label{eq:radial-product-variational-formula}
  \Phi_{BA}(t)=\sup_{y\in[0,C^2]}\{\Phi_A(ty)-\mathcal I_B(y)\},
\end{align}
where $\mathcal I_B$ is the local Legendre transform of $\Phi_B$ on $U_B$.

\medskip\noindent\emph{Step 3: the saddle equations and the $S$-transform.}
After shrinking $J$ and the parameter interval if necessary, the family and analytic clauses of Proposition~\ref{prop:local-laplace} give a unique analytic maximizer $y=y(t)$, with $y(0)=\bar y$. Set $s=ty(t)$ and $u=tR_A(s)$.
The stationarity condition gives $\mathcal I_B'(y(t))=u$. Since $\mathcal I_B'=(\Phi_B')^{-1}$ locally, we obtain $y(t)=\Phi_B'(u)=R_B(u)$, hence
\begin{align}\label{eq:coupled-saddle-equations}
  s=tR_B(u),\qquad u=tR_A(s).
\end{align}
The terms involving $y'(t)$ cancel by the stationary condition when we differentiate~\eqref{eq:radial-product-variational-formula} along $y(t)$. Using~\eqref{eq:three-radial-r-primitives}, we obtain
\begin{align}\label{eq:product-r-transform-saddle}
  R_{BA}(t)=\Phi_{BA}'(t)=y(t)\Phi_A'(ty(t))=R_A(s)R_B(u).
\end{align}
Set $w=tR_{BA}(t)$. By~\eqref{eq:coupled-saddle-equations} and~\eqref{eq:product-r-transform-saddle}, $w=sR_A(s)=uR_B(u)$. Applying~\eqref{eq:s-r-coordinate-relation} to these three identities gives
\begin{align*}
  S_{BA}(w)=\frac1{R_{BA}(t)}=\frac1{R_A(s)R_B(u)}=S_A(w)S_B(w).
\end{align*}
This is exactly~\eqref{eq:s-transform-multiplicativity} for the real values $w=tR_{BA}(t)$ obtained from the saddle-point calculation. Since
\begin{align*}
  \left.\frac{d}{dt}\left(tR_{BA}(t)\right)\right|_{t=0}
  =R_{BA}(0)
  =m_1(\mu_{BA})
  =m_1(\mu_A)m_1(\mu_B)\neq0,
\end{align*}
these values contain a real interval around $0$. Both sides of \eqref{eq:s-transform-multiplicativity} are analytic near $0$, so the identity holds for all sufficiently small complex $z$.
\end{proof}

\begin{remark}
If $\mu_B=\delta_c$ with $c>0$, the same conclusion is immediate: $\mu_A\boxtimes\delta_c$ is the dilation of $\mu_A$ by $c$, and $S_{\delta_c}\equiv 1/c$, so $S_{\mu_A\boxtimes\delta_c}=S_A/c$.
The non-degeneracy assumption in Theorem~\ref{thm:radial-s-transform} is used only to obtain the strict convexity needed in the saddle-point argument.
\end{remark}

\begin{corollary}
\label{cor:derived-s-multiplicativity}
Let $\mu$ and $\nu$ be compactly supported probability measures on $[0,\infty)$ with non-zero first moments. Then
\begin{align}\label{eq:derived-s-multiplicativity}
  S_{\mu\boxtimes\nu}(z)=S_\mu(z)S_\nu(z)
\end{align}
for all sufficiently small complex $z$.
\end{corollary}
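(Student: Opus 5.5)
The plan is to realize $\mu$ and $\nu$ by deterministic singular-value matrices and apply Theorem~\ref{thm:radial-s-transform}.

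\emph{Realization.} Choose $C\ge1$ with $\supp\mu\cup\supp\nu\subset[0,C^2]$. For each $N$, let $a_{N,1}\ge\cdots\ge a_{N,N}$ be the quantiles of $\mu$ at levels $i/(N+1)$, and define $b_{N,i}$ analogously from $\nu$. These lie in $[0,C^2]$. The empirical measures $\frac1N\sum_i\delta_{a_{N,i}}$ converge weakly to $\mu$, and likewise for $\nu$. Set $D_{A,N}=\diag(a_{N,i}^{1/2})$ and $A_N=U D_{A,N}V^*$ with independent Haar $U,V$. Define $B_N$ the same way, independent of $A_N$. Then $A_N,B_N$ are bi-unitarily invariant, $\norm{A_N},\norm{B_N}\le C$, and $\mu_{A_N^*A_N}$, $\mu_{B_N^*B_N}$ are deterministic.

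\emph{Concentration hypothesis.} Since these spectra are deterministic, $\PP(d_{\mathrm L}(\mu_{A_N^*A_N},\mu)>\varepsilon)$ is $0$ for all large $N$, by weak convergence. Hence \eqref{eq:loewner-squeeze-spectral-concentration} holds trivially with limit $\mu$, and the same argument gives it for $\nu$ (the logarithm of $0$ is $-\infty$).

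\emph{Degenerate cases.} If $\nu$ is non-degenerate, Theorem~\ref{thm:radial-s-transform} applies directly and gives $S_{\mu\boxtimes\nu}=S_\mu S_\nu$ near $0$. If $\nu=\delta_c$, then $c>0$ because the first moment is non-zero. In that case, use the remark after the theorem: $\mu\boxtimes\delta_c$ is the dilation of $\mu$ by $c$, which is immediate from Definition~\ref{def:boxtimes} since $y=c$ is free from everything. A dilated measure satisfies $\psi_{\mu_c}(z)=\psi_\mu(cz)$, so $\chi_{\mu_c}=\chi_\mu/c$ and $S_{\mu_c}=S_\mu/c=S_\mu S_{\delta_c}$.

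Alternatively, if $\mu$ is non-degenerate, one can use commutativity $\mu\boxtimes\nu=\nu\boxtimes\mu$, noted at the end of the proof of Proposition~\ref{prop:product-levy-concentration}, and apply the theorem with the roles of the two measures swapped.

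\emph{Main obstacle.} The only point needing care is that the hypothesis of Theorem~\ref{thm:radial-s-transform} is stated as superexponential concentration. For deterministic singular values this is vacuous once $N$ is large, so the main work is checking that the quantile construction converges weakly and stays in $[0,C^2]$. That holds because the quantile approximation converges in Lévy distance for any measure on a compact interval.
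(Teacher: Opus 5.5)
Your proposal is correct and follows essentially the same route as the paper. You realize $\mu,\nu$ by deterministic, uniformly bounded singular values with independent Haar singular vectors, note that the concentration hypothesis holds trivially, apply Theorem~\ref{thm:radial-s-transform} when $\nu$ is non-degenerate, and fall back on the dilation identity $S_{\mu\boxtimes\delta_c}=S_\mu/c$ when $\nu=\delta_c$ with $c>0$. Your explicit quantile construction and direct computation of the dilated $S$-transform only fill in details that the paper leaves implicit.
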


\begin{proof}
Choose deterministic nonnegative diagonal matrices $D_N,E_N$, uniformly bounded in dimension, such that the empirical spectral measures of $D_N^*D_N$ and $E_N^*E_N$ converge weakly to $\mu$ and $\nu$, respectively, and attach independent Haar left and right singular-vector matrices.
The concentration hypotheses of Theorem~\ref{thm:radial-s-transform} then hold.
If $\nu$ is not a point mass, that theorem gives~\eqref{eq:derived-s-multiplicativity}.
If $\nu=\delta_c$ with $c>0$, the conclusion is the dilation identity recorded in the point-mass remark following Theorem~\ref{thm:radial-s-transform}.
\end{proof}

\begin{remark}
The $S$-transform identity extends by the identity theorem to any connected common domain of analyticity containing $0$.
\end{remark}

\begin{corollary}
\label{cor:verify-product-hciz}
Let $A_N$ and $B_N$ be independent bi-unitarily invariant matrices whose singular values are deterministic and uniformly bounded.
Assume that
\begin{align*}
  \mu_{A_N^*A_N}\to\mu_A,
  \qquad
  \mu_{B_N^*B_N}\to\mu_B,
\end{align*}
weakly. Then the normalized annealed log-spherical integrals of $A_N$, $B_N$, and $B_NA_N$ satisfy, for $t$ in a neighborhood of $0$,
\begin{align}\label{eq:deterministic-input-radial-limits}
  \Phi_A(t)=\int_0^tR_{\mu_A}(s)\dd s,\qquad
  \Phi_B(t)=\int_0^tR_{\mu_B}(s)\dd s,
\end{align}
and
\begin{align}\label{eq:deterministic-product-radial-limit}
  \Phi_{BA}(t)
  =\int_0^tR_{\mu_A\boxtimes\mu_B}(s)\dd s .
\end{align}
\end{corollary}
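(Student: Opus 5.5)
The plan is to reduce the statement to Propositions~\ref{prop:loewner-squeeze} and~\ref{prop:product-levy-concentration}. With deterministic singular values, the hypotheses of those propositions become deterministic statements, so the work is in checking them.

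First, the empirical spectral measures of $A_N^*A_N$ and $B_N^*B_N$ are deterministic, because they depend only on the singular values. Hence $\PP(d_{\mathrm L}(\mu_{A_N^*A_N},\mu_A)>\varepsilon)$ is either $0$ or $1$. Weak convergence, which $d_{\mathrm L}$ metrizes, makes it $0$ for all large $N$. Its normalized logarithm is then $-\infty$ eventually, so~\eqref{eq:product-input-spectral-concentration} holds trivially, and likewise for $B_N$.

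Second, I fix $C$ with $\norm{A_N},\norm{B_N}\le C$ for all $N$, using the uniform bound on the singular values. Then $\mu_A$ and $\mu_B$ are supported in $[0,C^2]$, since the weak limit of measures on $[0,C^2]$ stays there. Applying Proposition~\ref{prop:loewner-squeeze} with $H_N=A_N^*A_N$ (respectively $B_N^*B_N$), norm bound $C^2$ and $\nu=\mu_A$ (respectively $\mu_B$) gives~\eqref{eq:deterministic-input-radial-limits} for $|t|<1/(64C^2)$. Here I note that $\E_{H_N}I_N(t;H_N)=J_N(t;A_N)$, because $x^*A^*Ax$ is the quadratic form of $H_N$.

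Third, Proposition~\ref{prop:product-levy-concentration} now yields the superexponential concentration~\eqref{eq:product-output-spectral-concentration} of $\mu_{(B_NA_N)^*(B_NA_N)}$ around $\mu_A\boxtimes\mu_B$. This measure is supported in $[0,C^4]$, because it is the limit of measures supported there (the concentration gives weak convergence in probability to it). Applying Proposition~\ref{prop:loewner-squeeze} again with bound $C^4$ gives~\eqref{eq:deterministic-product-radial-limit} for $|t|<1/(64C^4)$. Intersecting the three parameter ranges gives a common neighborhood of $0$.

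There is no real obstacle. The only points needing care are the trivial zero-or-one probability argument, confirming that the support of the limit measures lies in the required interval, and the case $C=0$, which is covered by the trivial interpretation.
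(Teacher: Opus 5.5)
Your proposal is correct and follows the paper's own proof. Deterministic spectra make the concentration hypothesis of Proposition~\ref{prop:loewner-squeeze} hold trivially, which gives the two one-factor limits; Proposition~\ref{prop:product-levy-concentration} then supplies the concentration needed to apply Proposition~\ref{prop:loewner-squeeze} once more to $(B_NA_N)^*(B_NA_N)$, and your checks on the supports and the parameter ranges are details the paper leaves implicit.
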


\begin{proof}
Since the empirical spectral measures of $A_N^*A_N$ and $B_N^*B_N$ are deterministic and converge weakly,
they satisfy the exponential concentration hypotheses of Proposition~\ref{prop:loewner-squeeze} trivially.
Therefore the annealed limit~\eqref{eq:loewner-squeeze-annealed-limit} gives \eqref{eq:deterministic-input-radial-limits}.
The product concentration~\eqref{eq:product-output-spectral-concentration} holds at the L\'evy-distance scale required by Proposition~\ref{prop:loewner-squeeze}.
Applying \eqref{eq:loewner-squeeze-annealed-limit} to $(B_NA_N)^*(B_NA_N)$ gives~\eqref{eq:deterministic-product-radial-limit}.
\end{proof}

\begin{remark}
The conclusion of Corollary~\ref{cor:verify-product-hciz} remains valid for random singular values, provided the uniform norm bound is retained and the empirical spectral measures of $A_N^*A_N$ and $B_N^*B_N$ satisfy~\eqref{eq:product-input-spectral-concentration}.
Weak convergence in probability alone does not control rare spectral events with probability $e^{-o(N)}$, which may still contribute to the annealed quantity $N^{-1}\log\E e^{N(\cdot)}$.
\end{remark}

\begin{theorem}[finite-dimensional estimate for the $S$-transform]
\label{thm:quantitative-local-multiplicativity}
Fix $0<c<C<\infty$. There is a radius $\rho=\rho(c,C)>0$ such that the following holds.
Let $A_N$ and $B_N$ be independent bi-unitarily invariant $N\times N$ matrices whose singular values are deterministic and contained in $[c,C]$.  Then, for every $\delta>0$, there are $N_0=N_0(\delta,c,C)$ and $\gamma=\gamma(\delta,c,C)>0$ such that, for all $N\ge N_0$,
\begin{align}\label{eq:quantitative-local-s-multiplicativity}
  \mathbb P\left( \sup_{|z|\le\rho}
  \left| S_{\mu_{(B_NA_N)^*(B_NA_N)}}(z) - S_{\mu_{A_N^*A_N}}(z)S_{\mu_{B_N^*B_N}}(z) \right| >\delta
  \right)
  \le e^{-\gamma N^2}.
\end{align}
The same difference tends to $0$ in $L^1$, uniformly over all such deterministic singular-value lists. The conclusions also hold when the singular values are random and belong to $[c,C]$ almost surely.
\end{theorem}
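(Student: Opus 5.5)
The strategy is to combine three ingredients: the conditional $e^{-cN^2}$ moment concentration from the proof of Proposition~\ref{prop:product-levy-concentration}, the exact identity $S_{\sigma\boxtimes\tau}=S_\sigma S_\tau$ from Corollary~\ref{cor:derived-s-multiplicativity}, and a uniform analytic-continuity estimate for $\sigma\mapsto S_\sigma$ on a fixed disk. The last one is valid for measures supported in a fixed compact subset of $(0,\infty)$.

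\textbf{Uniform domain for $S$.} Write $\sigma=\mu_{A_N^*A_N}$, $\tau=\mu_{B_N^*B_N}$ and $\rho_N=\mu_{(B_NA_N)^*(B_NA_N)}$. Then $\sigma,\tau$ are supported in $[c^2,C^2]$, and $\rho_N$ and $\sigma\boxtimes\tau$ are supported in $[c^4,C^4]$; the bounds for $\rho_N$ come from $c^2\le B_N^*B_N\le C^2$ in the Loewner order. For any probability measure $\rho$ on $[a,b]\subset(0,\infty)$:
\begin{itemize}
\item $\psi_\rho$ is analytic on $|z|<1/b$ and is bounded there uniformly in $\rho$.
\item Its derivative satisfies $\psi_\rho'(0)=m_1(\rho)\in[a,b]$.
\item Its second-order Taylor remainder is bounded uniformly in $\rho$.
\end{itemize}
A quantitative inverse function theorem (Rouch\'e, or Koebe-type bounds) then gives a radius $r_1=r_1(a,b)$ on which $\chi_\rho$ exists. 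On that disk $|\chi_\rho(z)|$ is bounded by $c_1|z|$, so $S_\rho=(1+z)\chi_\rho/z$ is analytic and bounded by some $M(a,b)$ for $|z|\le r_1$, uniformly in $\rho$. Take $\rho(c,C)=r_1(c^4,C^4)/2$, which also covers $\sigma,\tau$ because $[c^2,C^2]$ sits inside $[c^4,C^4]$ when $c\le1\le C$. If that inclusion fails, take the minimum of the radii.

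\textbf{Continuity in moments.} Next, show that $\sup_{|z|\le\rho}|S_\rho(z)-S_{\rho'}(z)|$ is small whenever the first $K$ moments of $\rho$ and $\rho'$ agree to within $\eta$. First bound $|\psi_\rho-\psi_{\rho'}|$ on $|z|\le r_1$: the terms of order at most $K$ contribute at most $\eta$ times a geometric sum, and the tail is at most $2\sum_{n>K}(b r_1)^n$. Then use the inversion step. If $\psi_\rho(\chi_\rho(w))=w=\psi_{\rho'}(\chi_{\rho'}(w))$, the uniform lower bound on $|\psi_{\rho'}'|$ near $0$ gives $|\chi_\rho-\chi_{\rho'}|\lesssim\sup|\psi_\rho-\psi_{\rho'}|$. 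The removable singularity at $z=0$ is handled by the maximum principle on the circle $|z|=r_1$.

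\textbf{Main probability estimate.} Apply the exact identity $S_{\sigma\boxtimes\tau}=S_\sigma S_\tau$, which holds on $|z|\le\rho$ by analytic continuation inside the uniform domain. This reduces the difference in \eqref{eq:quantitative-local-s-multiplicativity} to $S_{\rho_N}-S_{\sigma\boxtimes\tau}$. Realize $\rho_N=\mu_{H_N}$ as in the proof of Proposition~\ref{prop:product-levy-concentration}, and combine \eqref{eq:uniform-haar-moment-centering} with \eqref{eq:haar-moment-concentration} for $j\le K$. This gives $\max_{j\le K}|m_j(\rho_N)-m_j(\sigma\boxtimes\tau)|<\eta$ except with probability $2Ke^{-c'N^2}$, uniformly over the singular-value lists. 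For large $N$ this bound is at most $e^{-\gamma N^2}$.

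\textbf{$L^1$ and random singular values.} The $L^1$ claim follows because the difference is deterministically bounded by $2M^2$, since all $S$-transforms involved are bounded by $M$. For random singular values in $[c,C]$, condition on them and use the uniformity just established.

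The main obstacle is the uniform inverse-function step. One must give radii and bounds depending only on $(c,C)$ for $\chi_\rho$ and for $S_\rho$ near its removable singularity, and check that $\sigma\boxtimes\tau$ genuinely lies in the same uniform class of measures.
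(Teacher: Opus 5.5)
Your proposal is correct and follows the same route as the paper. Both arguments use a common disk on which the $S$-transforms of all measures supported in a fixed compact subinterval of $(0,\infty)$ are defined, bounded and continuous in the measure. Both then combine the conditional $e^{-cN^2}$ estimate from the proof of Proposition~\ref{prop:product-levy-concentration}, uniform in the singular values, with the exact identity of Corollary~\ref{cor:derived-s-multiplicativity}, extended to the whole disk by the identity theorem. Finally, both use boundedness for the $L^1$ claim and conditioning for random singular values.

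The only variation is that you measure closeness through finitely many moments with an explicit geometric tail bound, rather than through L\'evy distance and a compactness argument; this is a harmless streamlining. One small correction: $\psi_\rho$ is uniformly bounded only on closed disks $|z|\le r<1/b$, not on all of $|z|<1/b$ (consider $\delta_b$), which suffices for your argument.
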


\begin{proof}
Put
$d=\min\{c^2,c^4\}, \ D=\max\{C^2,C^4\}$.
The empirical spectral measures of $A_N^*A_N$ and $B_N^*B_N$ are supported in $[c^2,C^2]$, while both the empirical spectral measure of $(B_NA_N)^*(B_NA_N)$ and $\mu_{A_N^*A_N}\boxtimes\mu_{B_N^*B_N}$ are supported in $[c^4,C^4]$.
Indeed, if $c^2\Id\le a,b\le C^2\Id$, then
$c^4\Id\le a^{1/2}ba^{1/2}\le C^4\Id.$
Thus all measures below are supported in the common interval $[d,D]$.

We first note that there is a disk $|w|\le\rho$, depending only on $d,D$, on which the $S$-transforms of all probability measures supported in $[d,D]$ are defined and depend uniformly continuously on the measure in the weak topology.
Indeed, for such a measure $\theta$, the function
\begin{align*}
  \psi_\theta(z)=\int\frac{zx}{1-zx}\dd\theta(x)
\end{align*}
satisfies $\psi_\theta'(0)=m_1(\theta)\ge d$.
Using the uniform support bound, one may choose $r>0$ such that
\begin{align*}
  |\psi_\theta'(z)-m_1(\theta)|\le\frac d2,\qquad |z|\le r,
\end{align*}
uniformly in $\theta$.
Hence $\psi_\theta$ is uniformly injective on this disk and its inverse $\chi_\theta$ is defined on a common smaller disk.
If $\theta_n\to\theta$ weakly, then
$\psi_{\theta_n}\to\psi_\theta$ uniformly on $|z|\le r$, and therefore $\chi_{\theta_n}\to\chi_\theta$.
Hence
\begin{align*}
  S_{\theta_n}(w)=\frac{1+w}{w}\chi_{\theta_n}(w)
  \longrightarrow S_\theta(w)
\end{align*}
uniformly on a still smaller disk $|w|\le\rho$.
Compactness of the probability measures supported in $[d,D]$ makes this continuity uniform.
Consequently, for every $\delta>0$, there exists $\varepsilon=\varepsilon(\delta,c,C)>0$ such that
\begin{align}
  d_{\mathrm L}(\theta,\theta')\le\varepsilon
  \quad\Longrightarrow\quad
  \sup_{|w|\le\rho}|S_\theta(w)-S_{\theta'}(w)|\le\delta.
  \label{eq:uniform-levy-to-s-continuity}
\end{align}

The conditional estimate established in the proof of Proposition~\ref{prop:product-levy-concentration} is uniform over deterministic singular-value lists bounded by $C$.
Hence, for the $\varepsilon$ above and all sufficiently large $N$,
\begin{align*}
  \mathbb P\left(
  d_{\mathrm L}\left(
  \mu_{(B_NA_N)^*(B_NA_N)},
  \mu_{A_N^*A_N}\boxtimes\mu_{B_N^*B_N}
  \right)>\varepsilon
  \right)
  \le e^{-\gamma N^2},
\end{align*}
for some $\gamma=\gamma(\delta,c,C)>0$.
On the complementary event,~\eqref{eq:uniform-levy-to-s-continuity} and the multiplicativity derived in Corollary~\ref{cor:derived-s-multiplicativity} give the desired comparison.
Indeed, that corollary first gives equality on a neighborhood of $0$; all three $S$-transforms are analytic on the common disk $|w|<\rho$, so the identity theorem extends the equality to the whole disk.
Hence
\begin{align*}
  \sup_{|w|\le\rho}
  \left|
  S_{\mu_{(B_NA_N)^*(B_NA_N)}}(w)-
  S_{\mu_{A_N^*A_N}}(w)S_{\mu_{B_N^*B_N}}(w)
  \right|
  \le\delta.
\end{align*}
This proves~\eqref{eq:quantitative-local-s-multiplicativity}.

The $S$-transforms are uniformly bounded on the common disk, so the same estimate also gives uniform $L^1$ convergence.
If the singular values are random but lie in $[c,C]$ almost surely, conditioning on them gives the same estimate with constants depending only on $c,C,\delta$, and hence the unconditional conclusion.
\end{proof}

\section{From tensor products to products of matrices}
\label{sec:two-leg}

We use the vectorization convention
\begin{align*}
  (A\otimes B)\operatorname{vec}(M)
  = \operatorname{vec}(AMB^{\mathsf T}).
\end{align*}

\begin{proposition}
\label{cor:exact-spherical-two-leg}
Let $X_1$ be an arbitrary random matrix and let $X_2$ be a bi-unitarily invariant random matrix, independent of $X_1$. Let $\xi$ be Haar distributed on the unit sphere of $\C^N\otimes\C^N$, independent of $X_1,X_2$.
Then
\begin{align}\label{eq:two-leg-spherical-reduction}
  \norm{(X_1\otimes X_2)\xi}^2
  \stackrel{d}{=}
  \norm{((X_1X_2)\otimes\Id_N)\xi}^2.
\end{align}
\end{proposition}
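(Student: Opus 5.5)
Write $\xi=\operatorname{vec}(M)$, where $M\in M_N(\C)$ is Haar distributed on the unit sphere of $M_N(\C)$ with respect to the Hilbert--Schmidt norm. By the vectorization convention,
\begin{align*}
  \norm{(X_1\otimes X_2)\xi}^2=\norm{X_1MX_2^{\mathsf T}}_{\mathrm{HS}}^2,
  \qquad
  \norm{((X_1X_2)\otimes\Id_N)\xi}^2=\norm{X_1X_2M}_{\mathrm{HS}}^2 .
\end{align*}
The plan is to show that the two random matrices $X_1MX_2^{\mathsf T}$ and $X_1X_2M$ have the same Hilbert--Schmidt norm in law. We do this by moving the singular-value structure of $X_2$ from the right side of $M$ to the left side, using only unitary invariance and the independence hypotheses.

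\textbf{Step 1 (reduce to the Gram matrix $MM^*$).} For fixed $X_1$ we have
\begin{align*}
  \norm{X_1MX_2^{\mathsf T}}_{\mathrm{HS}}^2=\Tr\bigl(X_1^*X_1\,MX_2^{\mathsf T}\overline{X_2}M^*\bigr).
\end{align*}
Write $X_2=UDV^*$ with $U,V$ Haar and independent of $D$. Then $X_2^{\mathsf T}\overline{X_2}=\overline V D^2 V^{\mathsf T}$. The law of $M$ is invariant under $M\mapsto MW$ for any unitary $W$, and $\overline V$ is independent of $M$. Hence we may absorb $\overline V$ into $M$ and obtain
\begin{align*}
  \norm{X_1MX_2^{\mathsf T}}_{\mathrm{HS}}^2\stackrel{d}{=}\Tr\bigl(X_1^*X_1\,MD^2M^*\bigr),
\end{align*}
jointly with $X_1$ independent of $(M,D)$.

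\textbf{Step 2 (transpose the role of $D$).} For the right-hand side of the claim we have
\begin{align*}
  \norm{X_1X_2M}_{\mathrm{HS}}^2=\Tr\bigl(X_1^*X_1\,UD V^*MM^*VDU^*\bigr).
\end{align*}
Left invariance of the law of $M$ lets us replace $V^*M$ by $M$. The remaining $U$ is Haar and independent of everything else, so this expression equals in law $\Tr(X_1^*X_1\,UDMM^*DU^*)$. The key claim is then
\begin{align*}
  (MD^2M^*,\ \text{Haar conjugation})\ \stackrel{d}{=}\ UDMM^*DU^*.
\end{align*}
To prove it, first use right invariance of $M$ to write $MD^2M^*\stackrel{d}{=}U'MD^2M^*U'^*$, and note that $U(DMM^*D)U^*$ is already Haar-conjugated. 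It therefore suffices to show that $MD^2M^*$ and $DMM^*D$ have the same spectrum in law. We will use the realization $M=G/\norm{G}_{\mathrm{HS}}$ with $G$ Ginibre. The matrix $MD^2M^*=(MD)(MD)^*$ has the same nonzero spectrum as $DM^*MD$. The law of $M$ is invariant under $M\mapsto M^*$ (and under transposition), so $DM^*MD\stackrel{d}{=}DMM^*D$. Since all matrices are $N\times N$, equality of nonzero spectra with multiplicities gives equality of the full spectra.

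\textbf{Step 3 (combine).} Each side has the form $\Tr(X_1^*X_1\,\mathcal H)$, where $\mathcal H$ is a Haar-conjugated positive matrix independent of $X_1$. Its law is therefore determined by the law of the spectrum of $\mathcal H$, and Step 2 shows these spectral laws agree. Conditioning on $X_1$ and integrating then gives \eqref{eq:two-leg-spherical-reduction}. No invariance of $X_1$ is used; only its independence from $(X_2,\xi)$ is needed.

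The main obstacle is Step 2: one must check that the Haar conjugations are genuinely independent of both the spectral part and $X_1$, and that the adjoint invariance of the Haar vector in $\C^N\otimes\C^N$ interacts correctly with the swap between $MD$ and $DM$. The Ginibre realization should make each of these invariances explicit.
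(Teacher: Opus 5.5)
Your argument is correct and is essentially the paper's. The paper proves the general swap $XY\stackrel{d}{=}YX$ for independent bi-unitarily invariant matrices by comparing the singular values of $D_1WD_2$ and $D_2W^{\mathsf T}D_1$ via transposition, restores the full law by bi-unitary invariance, and applies this to $M$ and $X_2^{\mathsf T}$; you run the same swap on the Gram matrices $(MD)(MD)^*$ and $(DM)(DM)^*$, using the adjoint in place of the transpose (note that $MD^2M^*\stackrel{d}{=}U'MD^2M^*U'^*$ uses left, not right, invariance of $M$, and the Ginibre realization is unnecessary because $M\mapsto M^*$ is a real-linear Hilbert--Schmidt isometry and so preserves the uniform law on the sphere).
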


\begin{proof}
Write $\xi=\operatorname{vec}(M)$. Since vectorization is an isometry, $M$ is uniform on the Hilbert--Schmidt unit sphere and is bi-unitarily invariant.

For independent bi-unitarily invariant matrices $X,Y$, we have $XY\stackrel{d}{=}YX$; see \cite[Section~III.B]{IpsenKieburg2014} for the case of laws with densities.
Indeed, conditional on their singular values, the singular values of $XY$ have the law of those of $D_1WD_2$, where $W$ is Haar. Transposition gives the same law as for $D_2WD_1$, hence for $YX$. Both products are bi-unitarily invariant, so their matrix laws agree.

Apply this to $M$ and $X_2$. Since $X_2^{\mathsf T}\stackrel{d}{=}X_2$ and $X_1$ is independent of $(M,X_2)$, the vectorization convention gives
\begin{align*}
  \norm{(X_1\otimes X_2)\xi}^2
  =\norm{X_1MX_2^{\mathsf T}}_{\mathrm{HS}}^2
  \stackrel{d}{=}\norm{X_1X_2M}_{\mathrm{HS}}^2
  =\norm{((X_1X_2)\otimes\Id_N)\xi}^2.
\end{align*}
\end{proof}

\begin{theorem}
\label{thm:exact-two-leg-hciz}
Let $\Pi_N$ be a deterministic rank-one projection on $\C^N\otimes\C^N$.
Let $U_N$ be Haar distributed on $\U(N^2)$, independent of the matrix factors.
Let $X_{1,N}\in M_N(\C)$ be arbitrary and let $X_{2,N}\in M_N(\C)$ be bi-unitarily invariant, independent of $X_{1,N}$. Set
\begin{align*}
  \mathcal X_N=X_{1,N}\otimes X_{2,N},
  \qquad \mathcal P_N=(X_{1,N}X_{2,N})\otimes \Id_N .
\end{align*}
Then, for every $N$ and every real parameter $t$, the following identity holds in $[0,+\infty]$:
\begin{equation}\label{eq:exact-two-leg-hciz}
  \begin{aligned}
    &\E_{X_1,X_2}
    \int_{\U(N^2)}
    \exp\Bigl\{
    N^2 t\,\Tr\bigl( \mathcal X_N^*\mathcal X_N\,U_N\Pi_NU_N^* \bigr) \Bigr\}\,\dd U_N
    \\ &\qquad =
    \E_{X_1,X_2}
    \int_{\U(N^2)}
    \exp\Bigl\{ N^2 t\,\Tr\bigl( \mathcal P_N^*\mathcal P_N\,U_N\Pi_NU_N^* \bigr) \Bigr\}\,\dd U_N .
  \end{aligned}
\end{equation}
In terms of the rank-one spherical integral, this identity reads
\begin{align*}
  \E_{X_1,X_2}I_{N^2}(t;\mathcal X_N^*\mathcal X_N)=\E_{X_1,X_2}I_{N^2}(t;\mathcal P_N^*\mathcal P_N).
\end{align*}
\end{theorem}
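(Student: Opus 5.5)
The plan is to reduce the identity to Proposition~\ref{cor:exact-spherical-two-leg} by rewriting both sides as Laplace transforms of the squared norms appearing there.

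\emph{Step 1 (from Haar unitaries to a Haar vector).} Write $\Pi_N=ee^*$ with $\norm{e}=1$. For any Hermitian $H$ on $\C^N\otimes\C^N$ we have
\begin{align*}
\Tr(HU_N\Pi_NU_N^*)=(U_Ne)^*H(U_Ne).
\end{align*}
Since $\xi:=U_Ne$ is Haar distributed on the unit sphere of $\C^{N^2}$ and independent of the matrix factors, the left side of \eqref{eq:exact-two-leg-hciz} equals $\E\exp\{N^2t\,\xi^*\mathcal X_N^*\mathcal X_N\xi\}$. The right side likewise equals $\E\exp\{N^2t\,\xi^*\mathcal P_N^*\mathcal P_N\xi\}$. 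This is the same observation as in Remark~\ref{rem:rank-one-hciz-representation}, applied at dimension $N^2$.

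\emph{Step 2 (quadratic forms as norms).} Note that $\xi^*\mathcal X_N^*\mathcal X_N\xi=\norm{(X_{1,N}\otimes X_{2,N})\xi}^2$ and $\xi^*\mathcal P_N^*\mathcal P_N\xi=\norm{((X_{1,N}X_{2,N})\otimes\Id_N)\xi}^2$. Both sides are therefore $\E e^{N^2tZ}$ for nonnegative random variables $Z$. By Proposition~\ref{cor:exact-spherical-two-leg}, these two variables have the same law, and its hypotheses are exactly those of the theorem.

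\emph{Step 3 (conclusion).} Equal laws of nonnegative random variables give equal expectations of the nonnegative measurable function $z\mapsto e^{N^2tz}$. This holds for every real $t$, as an identity in $[0,+\infty]$. Tonelli's theorem justifies combining the Haar integral and the matrix expectation into one expectation, since the integrand is nonnegative. No finiteness is needed, which is why the statement is phrased in $[0,+\infty]$.

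The only substantive input is the distributional identity in Proposition~\ref{cor:exact-spherical-two-leg}, which is taken as given. The point that needs care is the independence structure when passing from $U_N$ to $\xi$. We must check that $\xi$ is independent of $(X_{1,N},X_{2,N})$, and that in the proposition's proof $X_{1,N}$ is independent of $(M,X_{2,N})$. Both hold because $U_N$ is independent of the factors.
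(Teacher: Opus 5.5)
Your proposal is correct and matches the paper's proof: you identify $U_N\Pi_NU_N^*$ with the projection onto the Haar vector $\xi=U_Ne$, write both sides as exponential moments of the two squared norms, and conclude from the distributional identity of Proposition~\ref{cor:exact-spherical-two-leg}. Your remarks on Tonelli, independence of $\xi$, and the $[0,+\infty]$ interpretation only make explicit what the paper leaves implicit.
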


\begin{proof}
The projection $U_N\Pi_NU_N^*$ is the projection onto a Haar unit vector $\xi_N\in\C^N\otimes\C^N$.
The left-hand side of \eqref{eq:exact-two-leg-hciz} is
\begin{align*}
  \E_{X_1,X_2,\xi_N}
  \exp\{N^2 t\,\norm{(X_{1,N}\otimes X_{2,N})\xi_N}^2\},
\end{align*}
and the right-hand side is the same expression with $(X_{1,N}X_{2,N})\otimes\Id_N$ in place of $X_{1,N}\otimes X_{2,N}$.
The equality in distribution of the two squared norms is Proposition~\ref{cor:exact-spherical-two-leg}.
Therefore their exponential moments are equal by \eqref{eq:two-leg-spherical-reduction}, proving \eqref{eq:exact-two-leg-hciz}.
\end{proof}

\begin{remark}
The equality in Theorem~\ref{thm:exact-two-leg-hciz} is annealed over $X_{2,N}$, and also over $X_{1,N}$ when the latter is random; only $X_{2,N}$ is required to be bi-unitarily invariant.
For fixed matrix realizations, the spectra of $(X_{1,N}\otimes X_{2,N})^*(X_{1,N}\otimes X_{2,N})$ and $((X_{1,N}X_{2,N})\otimes\Id_N)^*((X_{1,N}X_{2,N})\otimes\Id_N)$ need not coincide.
The exact statement is therefore a distributional identity after the stated averaging, not a quenched identity for each realization. The counterexample in Proposition~\ref{prop:higher-leg-counterexample} shows that, at a fixed positive parameter, even an asymptotic analogue fails for every $k\ge3$.
\end{remark}

\section{Global Haar and product-state vectors for tensors}
\label{sec:tensor-regimes}

There are two natural choices of test vector for a tensor product.
A global Haar vector in the full tensor-product space sees only the empirical spectral measure of $(X_{1,N}\otimes\cdots\otimes X_{k,N})^* (X_{1,N}\otimes\cdots\otimes X_{k,N})$ and therefore gives classical multiplication of eigenvalues.
A product-state vector keeps the one-leg speed $N$ and gives the free multiplicative convolution under the hypotheses of Theorem~\ref{thm:local-free}.
Both statements hold for any fixed number of legs. The equality in Theorem~\ref{thm:exact-two-leg-hciz} is specific to two legs.
The global statement is purely spectral after conditioning and requires neither invariance nor independence among the matrix factors.
The product-state statement uses independence of the variables $\norm{X_{i,N}x_{i,N}}^2$.

\subsection{Global Haar vectors}

\begin{theorem}[global Haar vector and classical convolution]
\label{thm:global-classical}
Fix $k\ge2$.
For each $i=1,\ldots,k$, let
$X_{i,N}\in M_N(\C)$ be an arbitrary random matrix; the matrices may
be mutually dependent.  Let $\mu_i$ be a compactly supported
probability measure on $[0,\infty)$.  Assume that
\begin{align}\label{eq:global-factor-gram-convergence}
  \mu_{X_{i,N}^*X_{i,N}}\longrightarrow \mu_i
\end{align}
weakly in probability, and that there is $0<M<\infty$ such that
\begin{align*}
  \mathbb P\left(\max_{1\le i\le k}\norm{X_{i,N}}\le M\right)\longrightarrow1.
\end{align*}
No independence or unitary-invariance assumption is
made on the joint matrix family.  Set
\begin{align*}
  \mathcal X_N=X_{1,N}\otimes\cdots\otimes X_{k,N},
  \qquad
  \nu_{\mathrm{cl}}=\mu_1\circledast\cdots\circledast\mu_k .
\end{align*}
Let $\xi_N$ be Haar distributed on the unit sphere of $(\C^N)^{\otimes k}$, independent of the joint matrix family. Here $\E_{\xi_N}$ denotes expectation only over the Haar vector, conditional on the matrices.
Then, for every fixed real $t$ with $|t|<1/(64M^{2k})$,
\begin{align}\label{eq:global-conditional-hciz-limit}
  \frac1{N^k}\log I_{N^k}(t;\mathcal X_N^*\mathcal X_N)
  =\frac1{N^k}\log \E_{\xi_N}\exp\{N^k t\,\norm{\mathcal X_N\xi_N}^2\}
  \xrightarrow[N\to\infty]{}\int_0^tR_{\nu_{\mathrm{cl}}}(s)\dd s,
\end{align}
where the convergence in probability is with respect to the joint randomness of $(X_{1,N},\ldots,X_{k,N})$.
If the singular values of the $X_{i,N}$ are deterministic, the conditional integral depends only on these singular values and is therefore deterministic, even if the singular vectors are random.
In this case, for every $N$,
\begin{align*}
  J_{N^k}(t;\mathcal X_N)=\E_{X_{1,N},\ldots,X_{k,N}}I_{N^k}(t;\mathcal X_N^*\mathcal X_N)=I_{N^k}(t;\mathcal X_N^*\mathcal X_N).
\end{align*}
Using the rank-one HCIZ representation in Remark~\ref{rem:rank-one-hciz-representation}, we obtain the annealed limit
\begin{align}\label{eq:global-deterministic-hciz-limit}
  \lim_{N\to\infty}\frac1{N^k}\log\E_{X_{1,N},\ldots,X_{k,N}}I_{N^k}(t;\mathcal X_N^*\mathcal X_N)
  &=\lim_{N\to\infty}\frac1{N^k}\log \int_{\U(N^k)}\exp\Bigl\{ N^k t\,\Tr\bigl( \mathcal X_N^*\mathcal X_N\,U\Pi_NU^*\bigr) \Bigr\}\dd U \notag \\
  &=\int_0^t R_{\nu_{\mathrm{cl}}}(s)\dd s,
\end{align}
for every deterministic rank-one projection $\Pi_N$.

If, in addition, the smallest and largest eigenvalues of $X_{i,N}^*X_{i,N}$ converge in probability to the corresponding edges of $\supp\mu_i$ for every $i$, then~\eqref{eq:global-conditional-hciz-limit} holds for every fixed real $t\in\mathcal D_{\nu_{\mathrm{cl}}}$ as defined in~\eqref{eq:real-r-transform-domain}. For deterministic singular values,~\eqref{eq:global-deterministic-hciz-limit} holds on this domain as well.
\end{theorem}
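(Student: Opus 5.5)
The plan is to reduce everything to the deterministic rank-one HCIZ asymptotics of Theorem~\ref{thm:rank-one-hciz}, applied at dimension $N^k$ to the Hermitian matrix $\mathcal X_N^*\mathcal X_N=\bigotimes_i X_{i,N}^*X_{i,N}$, along subsequences.

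\textbf{Spectral description.} The eigenvalues of $\mathcal X_N^*\mathcal X_N$ are the products $\lambda_{j_1}^{(1)}\cdots\lambda_{j_k}^{(k)}$, counted with multiplicity over all multi-indices, where $\lambda^{(i)}$ are the eigenvalues of $X_{i,N}^*X_{i,N}$. Hence
\begin{align*}
  \mu_{\mathcal X_N^*\mathcal X_N}=\mu_{X_{1,N}^*X_{1,N}}\circledast\cdots\circledast\mu_{X_{k,N}^*X_{k,N}}
\end{align*}
exactly. Classical multiplicative convolution is weakly continuous, being the pushforward of a product measure under the continuous map $(x_1,\ldots,x_k)\mapsto x_1\cdots x_k$. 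So $\mu_{\mathcal X_N^*\mathcal X_N}\to\nu_{\mathrm{cl}}$ weakly in probability. Also $\norm{\mathcal X_N^*\mathcal X_N}=\prod_i\norm{X_{i,N}}^2\le M^{2k}$ on an event of probability tending to one. The conditional integral depends only on this spectrum, because $\xi_N$ is Haar and independent of the matrices.

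\textbf{Convergence in probability.} I will use a deterministic uniform statement. For $|t|<1/(64M^{2k})$ and every $a>0$, there is $\varepsilon>0$ such that
\begin{align*}
  \Bigl|n^{-1}\log I_n(t;H)-\int_0^tR_{\nu_{\mathrm{cl}}}(s)\dd s\Bigr|\le a
\end{align*}
for all large $n$ and all $0\le H\le M^{2k}\Id_n$ with $d_{\mathrm L}(\mu_H,\nu_{\mathrm{cl}})\le\varepsilon$. This is exactly the consequence of \cite{GuionnetMaida2005} extracted in the proof of Proposition~\ref{prop:loewner-squeeze}, with $C=M^{2k}$. Apply it with $n=N^k$ on the good event where both the norm bound and the $\varepsilon$-closeness hold. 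That event has probability tending to one, which gives~\eqref{eq:global-conditional-hciz-limit}.

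If one prefers to avoid the uniform statement, argue by contradiction. Take a subsequence along which the deviation exceeds $a$ with probability at least $a$. Pass to a further subsequence converging almost surely, by metrizability of weak convergence via $d_{\mathrm L}$, and apply Theorem~\ref{thm:rank-one-hciz} realization by realization. This step needs edge control. Replacing $\ell,r$ by $0$ and $M^{2k}$ handles it, because $\mathcal D_{\nu}^{0,M^{2k}}$ contains the interval $|t|<1/(64M^{2k})$, by the $G_\rho(2C)$, $G_\rho(-C)$ bounds in the proof of Proposition~\ref{prop:loewner-squeeze}.

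\textbf{Deterministic singular values.} If the singular values are deterministic, then $\mu_{\mathcal X_N^*\mathcal X_N}$ is deterministic. The identity $J_{N^k}=I_{N^k}$ is then immediate, since $I_{N^k}$ depends only on the spectrum. Limit~\eqref{eq:global-deterministic-hciz-limit} follows from the previous step together with Remark~\ref{rem:rank-one-hciz-representation}, because $U\Pi_NU^*$ projects onto a Haar unit vector.

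\textbf{Edge clause.} Assume the extreme eigenvalues of $X_{i,N}^*X_{i,N}$ converge to $\ell_{\mu_i}$ and $r_{\mu_i}$. Then $\lambda_{\min}(\mathcal X_N^*\mathcal X_N)=\prod_i\lambda_{\min}^{(i)}\to\prod_i\ell_{\mu_i}$, and similarly for $\lambda_{\max}$. These limits are the edges $\ell_{\nu_{\mathrm{cl}}}$ and $r_{\nu_{\mathrm{cl}}}$, since the support of a product of independent variables with compact supports in $[0,\infty)$ has minimum equal to the product of the minima and maximum equal to the product of the maxima. The subsequence argument then applies Theorem~\ref{thm:rank-one-hciz} with $\ell=\ell_{\nu_{\mathrm{cl}}}$ and $r=r_{\nu_{\mathrm{cl}}}$ for every $t\in\mathcal D_{\nu_{\mathrm{cl}}}$.

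\textbf{Main obstacle.} The delicate point is this almost-sure subsequence step, since Theorem~\ref{thm:rank-one-hciz} is stated for deterministic sequences. The way through is to extract almost-surely convergent subsequences for the measures, the norms, and the edges simultaneously, which is possible since there are countably many conditions. Then apply the theorem pointwise on the probability space and conclude by the subsequence characterization of convergence in probability.
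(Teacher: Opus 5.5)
Your proposal is correct and follows the paper's proof. The exact identity $\mu_{\mathcal X_N^*\mathcal X_N}=\mu_{X_{1,N}^*X_{1,N}}\circledast\cdots\circledast\mu_{X_{k,N}^*X_{k,N}}$ reduces everything to \cite{GuionnetMaida2005} in dimension $N^k$, and you handle the edge and deterministic clauses exactly as the paper does. The one variation is in the case $|t|<1/(64M^{2k})$: you apply the uniform-in-$H$ estimate from the proof of Proposition~\ref{prop:loewner-squeeze} on a high-probability event, while the paper extracts almost surely convergent subsequences (using Borel--Cantelli for the norm bound) and applies \cite[Theorem~2]{GuionnetMaida2005} realization by realization.

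One correction to your optional alternative: Theorem~\ref{thm:rank-one-hciz} requires $\lambda_{\min}\to\ell$ and $\lambda_{\max}\to r$, so you cannot simply set $\ell=0$, $r=M^{2k}$. Instead, for each realization pass to a further subsequence along which the extremes converge to some $\ell'\in[0,\ell_{\nu_{\mathrm{cl}}}]$ and $r'\in[r_{\nu_{\mathrm{cl}}},M^{2k}]$, and use the inclusion $\mathcal D^{0,M^{2k}}_{\nu_{\mathrm{cl}}}\subset\mathcal D^{\ell',r'}_{\nu_{\mathrm{cl}}}$.
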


\begin{proof}
Condition on the matrices.
The argument below uses only the eigenvalues of $X_{i,N}^*X_{i,N}$. Since we have the factorization
\begin{align*}
  \mathcal X_N^*\mathcal X_N
  =X_{1,N}^*X_{1,N}\otimes\cdots\otimes X_{k,N}^*X_{k,N},
\end{align*}
the eigenvalues of $\mathcal X_N^*\mathcal X_N$ are exactly the products $\lambda_{1,j_1}\cdots\lambda_{k,j_k}$, where $\lambda_{i,j}$ are the eigenvalues of $X_{i,N}^*X_{i,N}$.
Hence the empirical spectral measure of $\mathcal X_N^*\mathcal X_N$ is
\begin{align*}
  \mu_{X_{1,N}^*X_{1,N}}\circledast\cdots\circledast
  \mu_{X_{k,N}^*X_{k,N}},
\end{align*}
which converges weakly in probability to $\nu_{\mathrm{cl}}$.
From any subsequence, extract a further subsequence along which the empirical spectral measure of $\mathcal X_N^*\mathcal X_N$ converges weakly almost surely and $\max_i\norm{X_{i,N}}\le M$ eventually almost surely. For the latter assertion, choose the failure probabilities to be summable and apply the Borel--Cantelli lemma. Along almost every such realization, $0\le\mathcal X_N^*\mathcal X_N\le M^{2k}\Id_{N^k}$ eventually, so \cite[Theorem~2]{GuionnetMaida2005}, with $\beta=2$, gives~\eqref{eq:global-conditional-hciz-limit} in dimension $N^k$ for each fixed real $|t|<1/(64M^{2k})$. The subsequence criterion therefore gives convergence in probability.

Under the additional edge assumptions, the extreme eigenvalues of $\mathcal X_N^*\mathcal X_N$ converge in probability to the corresponding edges of $\supp\nu_{\mathrm{cl}}$, since they are products of the corresponding factor eigenvalues. The same subsequence argument, now also taking almost sure limits of the extreme eigenvalues and applying Theorem~\ref{thm:rank-one-hciz}, proves~\eqref{eq:global-conditional-hciz-limit} for every fixed real $t\in\mathcal D_{\nu_{\mathrm{cl}}}$.
For deterministic singular values, $I_{N^k}(t;\mathcal X_N^*\mathcal X_N)$ is deterministic, so averaging over the matrices leaves its value unchanged.
\end{proof}

\begin{remark}[quenched versus annealed]
The limit~\eqref{eq:global-conditional-hciz-limit} is quenched with respect to the matrix factors.
Taking the matrix expectation outside the logarithm gives the averaged quenched quantity, while placing it inside the logarithm gives the annealed quantity. Convergence in probability of the conditional normalized logarithm alone does not imply the annealed limit.
Rare matrix events can contribute at the $N^k$ exponential scale, so such an annealed conclusion requires additional tail or concentration assumptions.
\end{remark}

\begin{corollary}[product of two matrices at speed \texorpdfstring{$N^2$}{N2}]
\label{cor:two-leg-product-classical}
Let $X_{1,N}$ be arbitrary and let $X_{2,N}$ be independent of $X_{1,N}$ and bi-unitarily invariant.
Suppose that both matrices have deterministic, uniformly bounded singular values and that $\mu_{X_{i,N}^*X_{i,N}}\to\mu_i$ weakly for $i=1,2$, where $\mu_1,\mu_2$ are compactly supported probability measures on $[0,\infty)$.
Let
\begin{align}\label{eq:two-leg-product-amplification}
  \mathcal P_N=(X_{1,N}X_{2,N})\otimes\Id_N
\end{align}
on $\C^N\otimes\C^N$, let $\Pi_N$ be a deterministic rank-one projection, and set $\nu_{\mathrm{cl}}=\mu_1\circledast\mu_2$.
Then
for all real $t$ sufficiently close to $0$,
\begin{align}\label{eq:two-leg-product-speed-n2-limit}
  \lim_{N\to\infty}\frac1{N^2}\log\E_{X_1,X_2}I_{N^2}(t;\mathcal P_N^*\mathcal P_N)
  &=\lim_{N\to\infty}\frac1{N^2}\log\E_{X_1,X_2}\int_{\U(N^2)}\exp\left\{N^2t\Tr\left(\mathcal P_N^*\mathcal P_N U\Pi_NU^*\right)\right\}\dd U\notag\\
  &=\int_0^tR_{\nu_{\mathrm{cl}}}(s)\dd s.
\end{align}

If, in addition, the smallest and largest eigenvalues of $X_{i,N}^*X_{i,N}$ converge to the corresponding edges of $\supp\mu_i$ for $i=1,2$, then~\eqref{eq:two-leg-product-speed-n2-limit} holds for every real $t\in\mathcal D_{\nu_{\mathrm{cl}}}$.
\end{corollary}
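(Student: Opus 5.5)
The plan is to chain the exact two-leg identity with the global Haar theorem. By Theorem~\ref{thm:exact-two-leg-hciz}, for every $N$ and every real $t$,
\begin{align*}
  \E_{X_1,X_2}I_{N^2}(t;\mathcal P_N^*\mathcal P_N)=\E_{X_1,X_2}I_{N^2}(t;\mathcal X_N^*\mathcal X_N),
  \qquad \mathcal X_N=X_{1,N}\otimes X_{2,N}.
\end{align*}
This identity needs only that $X_{2,N}$ is bi-unitarily invariant and independent of $X_{1,N}$, which is assumed. The first equality in~\eqref{eq:two-leg-product-speed-n2-limit} is just the rank-one HCIZ representation of Remark~\ref{rem:rank-one-hciz-representation}, applied in dimension $N^2$. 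Indeed, $U\Pi_NU^*$ is the projection onto a Haar unit vector, so the unitary integral equals $I_{N^2}(t;\mathcal P_N^*\mathcal P_N)$ pointwise in the matrices, before averaging.

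Next I will evaluate the right-hand side using Theorem~\ref{thm:global-classical} with $k=2$. Since the singular values of $X_{1,N},X_{2,N}$ are deterministic, the spectrum of $\mathcal X_N^*\mathcal X_N=X_{1,N}^*X_{1,N}\otimes X_{2,N}^*X_{2,N}$ is deterministic. So $I_{N^2}(t;\mathcal X_N^*\mathcal X_N)$ is a deterministic number and the expectation over the matrices is trivial. Let $M$ be a uniform bound on the singular values. The weak-convergence hypothesis (deterministic, hence trivially in probability) and the norm hypothesis then hold. The annealed limit~\eqref{eq:global-deterministic-hciz-limit} gives $\int_0^tR_{\nu_{\mathrm{cl}}}(s)\dd s$ for $|t|<1/(64M^4)$, with $\nu_{\mathrm{cl}}=\mu_1\circledast\mu_2$.

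Combining the two steps, the normalized logarithms of the two annealed integrals coincide for every $N$. Hence the limit for $\mathcal P_N$ exists and equals the same primitive of the $R$-transform. Under the additional edge hypotheses, the last paragraph of Theorem~\ref{thm:global-classical} extends~\eqref{eq:global-deterministic-hciz-limit} to all real $t\in\mathcal D_{\nu_{\mathrm{cl}}}$. The exact identity holds for every real $t$, with both sides finite since the norms are bounded, so the extension transfers verbatim.

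There is no analytic obstacle here. The only point to check carefully is that the hypotheses line up. Theorem~\ref{thm:exact-two-leg-hciz} needs no assumption on $X_{1,N}$. Theorem~\ref{thm:global-classical} needs no invariance or independence, and determinism of the singular values is exactly what makes the conditional (quenched) integral equal the annealed one. I will also remark explicitly that no rare-event issue arises: although $\mathcal P_N^*\mathcal P_N$ has a random spectrum, we never analyze it directly, and pass only through the exact finite-$N$ equality.
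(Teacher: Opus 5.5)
Your proposal is correct and matches the paper's proof. The exact identity of Theorem~\ref{thm:exact-two-leg-hciz} converts the annealed integral for $\mathcal P_N$ into $J_{N^2}(t;X_{1,N}\otimes X_{2,N})$ for every $N$ and real $t$, and Theorem~\ref{thm:global-classical} with $k=2$, deterministic singular values and its edge clause then gives the limit near $0$ and on $\mathcal D_{\nu_{\mathrm{cl}}}$.
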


\begin{proof}
By Theorem~\ref{thm:exact-two-leg-hciz} and the definition of $J$, for every $N$ and every real $t$,
\begin{align*}
  \E_{X_1,X_2}I_{N^2}(t;\mathcal P_N^*\mathcal P_N)=J_{N^2}(t;X_{1,N}\otimes X_{2,N}).
\end{align*}
Theorem~\ref{thm:global-classical} now gives the limit near $0$ and, under the additional edge assumptions, on $\mathcal D_{\nu_{\mathrm{cl}}}$.
\end{proof}

\begin{remark}[second cumulant check]
For a fixed Hermitian matrix $H$ in dimension $D$ and a Haar unit vector $\xi\in\C^D$,
\begin{align*}
  \Var_\xi(\xi^*H\xi)
  = \frac{\Tr(H^2)-D^{-1}\Tr(H)^2}{D(D+1)}
  = \frac{\Var(\mu_H)}{D+1}.
\end{align*}
Thus,
\begin{align*}
  \left.\frac{d^2}{dt^2}\right|_{t=0}
  \frac1D\log\E_\xi e^{Dt\,\xi^*H\xi}
  = \frac{D}{D+1}\Var(\mu_H).
\end{align*}
For compactly supported probability measures $\mu_1,\mu_2$ on $[0,\infty)$, independence and the non-crossing moment--cumulant formula give, respectively,
\begin{align*}
  \Var(\mu_1\circledast\mu_2)&=\Var(\mu_1)m_1(\mu_2)^2+m_1(\mu_1)^2\Var(\mu_2)+\Var(\mu_1)\Var(\mu_2),\\
  \Var(\mu_1\boxtimes\mu_2)&=\Var(\mu_1)m_1(\mu_2)^2+m_1(\mu_1)^2\Var(\mu_2).
\end{align*}
Hence
\begin{align}\label{eq:classical-free-variance-gap}
  \Var(\mu_1\circledast\mu_2)-\Var(\mu_1\boxtimes\mu_2)
  =\Var(\mu_1)\Var(\mu_2).
\end{align}
Equation~\eqref{eq:classical-free-variance-gap} shows that the classical and free multiplicative convolutions already differ at second order unless at least one factor has zero variance.
\end{remark}

\begin{corollary}[averaging before and after the logarithm]
\label{cor:two-orders-of-averaging}
Let $A_N,B_N$ be independent bi-unitarily invariant matrices with deterministic, uniformly bounded singular values.
Suppose that the empirical spectral measures of $A_N^*A_N$ and $B_N^*B_N$ converge weakly to compactly supported probability measures $\mu_A,\mu_B$ on $[0,\infty)$.
Set $H_N=(B_NA_N)^*(B_NA_N)$. For all real $t$ sufficiently close to $0$,
\begin{align*}
  \lim_{N\to\infty}\frac1{N^2}\E_{A_N,B_N}\left[\log I_{N^2}(t;H_N\otimes\Id_N)\right]&=\int_0^tR_{\mu_A\boxtimes\mu_B}(s)\dd s,\\
  \lim_{N\to\infty}\frac1{N^2}\log\E_{A_N,B_N}\left[I_{N^2}(t;H_N\otimes\Id_N)\right]&=\int_0^tR_{\mu_A\circledast\mu_B}(s)\dd s.
\end{align*}
The first line gives the limit of the averaged quenched normalized logarithm, and the second gives the annealed limit.
The difference between these limits satisfies
\begin{align}
  \label{eq:conclusion-averaging-gap}
  \int_0^t\left(R_{\mu_A\circledast\mu_B}(s)-R_{\mu_A\boxtimes\mu_B}(s)\right)\dd s
  =\frac{t^2}{2}\Var(\mu_A)\Var(\mu_B)+O(t^3).
\end{align}
\end{corollary}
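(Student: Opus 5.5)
The plan has three parts: the quenched line, the annealed line, and the gap expansion.

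\emph{Quenched line.} First reduce from dimension $N^2$ to dimension $N$. For fixed matrices, $H_N\otimes\Id_N$ has empirical spectral measure exactly $\mu_{H_N}$, each eigenvalue repeated $N$ times. Its extreme eigenvalues and operator norm are those of $H_N$. Also $\norm{H_N}\le C^4$ deterministically, where $C$ bounds the singular values uniformly.

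Next, Proposition~\ref{prop:product-levy-concentration} applies, since deterministic convergent input spectra satisfy \eqref{eq:product-input-spectral-concentration} trivially. It gives $\PP(d_{\mathrm L}(\mu_{H_N},\mu_A\boxtimes\mu_B)>\varepsilon)\le e^{-cN^2}$ for large $N$.

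By the uniform form of \cite[Theorem~2]{GuionnetMaida2005} used in the proof of Proposition~\ref{prop:loewner-squeeze}, in dimension $N^2$ with $|t|<1/(64C^4)$, for each $a>0$ there is $\varepsilon>0$ with the following property. For large $N$, every $0\le H\le C^4\Id_{N^2}$ whose empirical measure is $\varepsilon$-close to $\mu_A\boxtimes\mu_B$ satisfies
\[
\Bigl|N^{-2}\log I_{N^2}(t;H)-\int_0^tR_{\mu_A\boxtimes\mu_B}(s)\dd s\Bigr|\le a .
\]
On the bad event we use the deterministic bound $|N^{-2}\log I_{N^2}(t;H)|\le|t|C^4$. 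The bad event has probability $o(1)$, so its contribution to the expectation vanishes. Letting $a\downarrow0$ gives the first limit.

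\emph{Annealed line.} Apply Corollary~\ref{cor:two-leg-product-classical} with $X_{1,N}=A_N$ and $X_{2,N}=B_N$, noting that $B_N$ is bi-unitarily invariant and independent of $A_N$. With $\mathcal P_N=(A_NB_N)\otimes\Id_N$, it gives the classical limit for $\E I_{N^2}(t;\mathcal P_N^*\mathcal P_N)$.

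The statement instead concerns $H_N=(B_NA_N)^*(B_NA_N)$, so I need to identify the laws. By the argument in Proposition~\ref{cor:exact-spherical-two-leg}, $A_NB_N\stackrel{d}{=}B_NA_N$ for independent bi-unitarily invariant factors. Hence $\mathcal P_N^*\mathcal P_N\stackrel{d}{=}H_N\otimes\Id_N$, and the expectations coincide. Alternatively, one can apply the corollary directly with $X_{1,N}=B_N$, $X_{2,N}=A_N$; then $\mathcal P_N=(B_NA_N)\otimes\Id_N$ and $\mu_B\circledast\mu_A=\mu_A\circledast\mu_B$. This alternative avoids the identification entirely, so I would use it.

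\emph{Gap expansion.} Expand $\int_0^tR_\mu(s)\dd s=\kappa_1 t+\kappa_2 t^2/2+O(t^3)$ using \eqref{eq:r-cumulant-expansion}, where $\kappa_1=m_1$ and $\kappa_2=\Var$. The first moments agree: $m_1(\mu_A\circledast\mu_B)=m_1(\mu_A)m_1(\mu_B)=m_1(\mu_A\boxtimes\mu_B)$. The variance difference is \eqref{eq:classical-free-variance-gap}. The $O(t^3)$ term is uniform because both $R$-transforms are analytic near $0$.

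\emph{Main obstacle.} The main difficulty is the quenched line. One needs uniformity of the Guionnet--Maïda estimate over all matrices in the L\'evy neighbourhood, including at the $N^2$ dimension with the $N$-fold multiplicity. This uniformity is exactly what the proof of Proposition~\ref{prop:loewner-squeeze} records, and only weak closeness plus the norm bound is needed for small $t$. Beyond that, the argument is a bounded-convergence step.
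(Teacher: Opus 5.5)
Your proposal is correct and follows the paper's proof essentially step for step. For the quenched line, both use product spectral concentration, the uniform small-$t$ Guionnet--Ma\"ida estimate in dimension $N^2$, and the bound $|N^{-2}\log I_{N^2}|\le|t|C^4$ (the paper argues along almost surely convergent subsequences, you use the uniform L\'evy-neighbourhood form recorded in the proof of Proposition~\ref{prop:loewner-squeeze}, which is equivalent); for the annealed line and the gap, both use Corollary~\ref{cor:two-leg-product-classical} with $X_{1,N}=B_N$, $X_{2,N}=A_N$, and the cumulant expansion together with \eqref{eq:classical-free-variance-gap}.
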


\begin{proof}
Proposition~\ref{prop:product-levy-concentration} gives $\mu_{H_N}\to\mu_A\boxtimes\mu_B$ weakly in probability. If $\norm{A_N},\norm{B_N}\le C$ with $C\ge1$, then $0\le H_N\otimes\Id_N\le C^4\Id_{N^2}$ and $\mu_{H_N\otimes\Id_N}=\mu_{H_N}$. The small-parameter condition in
\cite[Theorem~2]{GuionnetMaida2005}
holds uniformly for $|t|<1/(64C^4)$. Applying that theorem in dimension $N^2$ along almost surely convergent subsequences of the empirical measures gives
\begin{align*}
  \frac1{N^2}\log I_{N^2}(t;H_N\otimes\Id_N)
  \longrightarrow\int_0^tR_{\mu_A\boxtimes\mu_B}(s)\dd s
\end{align*}
in probability.
The bound $\left|N^{-2}\log I_{N^2}(t;H_N\otimes\Id_N)\right|\le|t|C^4$ gives convergence of expectations and proves the first limit.
The second follows from Corollary~\ref{cor:two-leg-product-classical}, with $X_{1,N}=B_N$ and $X_{2,N}=A_N$.
Finally, $R_\mu(s)=m_1(\mu)+\Var(\mu)s+O(s^2)$ and~\eqref{eq:classical-free-variance-gap} give~\eqref{eq:conclusion-averaging-gap}.
\end{proof}

\subsection{Product-state vectors}

The following consequence of Proposition~\ref{prop:local-laplace} and Corollary~\ref{cor:derived-s-multiplicativity} will be used in the proof of Theorem~\ref{thm:local-free}.

\begin{proposition}
\label{prop:scalar-product-step}
Let $P_N,Q_N$ be independent nonnegative random variables, uniformly bounded in $N$.
Let $\nu$ and $\mu$ be compactly supported probability measures on $[0,\infty)$ with non-zero first moments.
Assume that, for $s$ near $0$,
\begin{align}\label{eq:scalar-product-input}
  \lim_{N\to\infty}\frac1N\log\E e^{NsP_N}
  =\int_0^s R_\nu(u)\dd u,
  \qquad \lim_{N\to\infty}\frac1N\log\E e^{NsQ_N}
  =\int_0^s R_\mu(u)\dd u,
\end{align}
and that $\mu$ is non-degenerate. Then, for $t$ near $0$,
\begin{align}\label{eq:scalar-product-output-log-laplace}
  \lim_{N\to\infty}\frac1N\log\E e^{NtP_NQ_N}
  =\int_0^t R_{\nu\boxtimes\mu}(u)\dd u .
\end{align}
\end{proposition}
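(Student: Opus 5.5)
The plan is to rerun Steps 2 and 3 of the proof of Theorem~\ref{thm:radial-s-transform}, with $(P_N,Q_N)$ in place of $(X_N,Y_N)$. The one difference is at the end. There the identification of the resulting transform with $R_{\nu\boxtimes\mu}$ came from the matrix side, through Proposition~\ref{prop:loewner-squeeze}. Here we have no matrices, so we instead invoke Corollary~\ref{cor:derived-s-multiplicativity} and run the saddle-point computation in reverse.

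Write $\Phi_P(s)=\int_0^sR_\nu$ and $\Phi_Q(s)=\int_0^sR_\mu$. Let $0\le P_N,Q_N\le C$.

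\emph{Step 1: uniform conditioning on $Q_N$.} For $y\in[0,C]$ and $|t|$ small, $ty$ lies in the interval of~\eqref{eq:scalar-product-input}. The functions $s\mapsto N^{-1}\log\E e^{NsP_N}$ are convex and converge pointwise, so by the remark after Proposition~\ref{prop:local-laplace} they converge locally uniformly. Hence
\[
\E\bigl[e^{NtP_NQ_N}\mid Q_N\bigr]=\exp\{N\Phi_P(tQ_N)+N\varepsilon_N(Q_N)\},\qquad \sup_{y\in[0,C]}|\varepsilon_N(y)|\to0 .
\]

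\emph{Step 2: local Laplace principle.} Since $\mu$ is non-degenerate, $\Phi_Q''(0)=\Var(\mu)>0$, and $\Phi_Q$ is analytic near $0$. Take $f_t(y)=\Phi_P(ty)$; it is analytic in $(t,y)$ and tends to $0$ uniformly as $t\to0$. The perturbation and family clauses of Proposition~\ref{prop:local-laplace} then give, for small $t$,
\[
\Lambda(t):=\lim_{N\to\infty}\frac1N\log\E e^{NtP_NQ_N}=\sup_{y}\{\Phi_P(ty)-\mathcal I_Q(y)\}.
\]
The analytic clause gives a unique analytic maximizer $y(t)$ with $y(0)=m_1(\mu)$.

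\emph{Step 3: saddle equations.} Set $s=ty(t)$ and $u=tR_\nu(s)$. Stationarity gives $\mathcal I_Q'(y)=u$, so $y=R_\mu(u)$ and
\[
s=tR_\mu(u),\qquad u=tR_\nu(s).
\]
By the envelope argument, $\Lambda'(t)=y\,R_\nu(s)=R_\nu(s)R_\mu(u)$, and $\Lambda(0)=0$. Put $w=t\Lambda'(t)$. Then $w=sR_\nu(s)=uR_\mu(u)$. Applying~\eqref{eq:s-r-coordinate-relation} to $\nu$ and to $\mu$ gives
\[
\frac{1}{\Lambda'(t)}=S_\nu(w)S_\mu(w).
\]
Corollary~\ref{cor:derived-s-multiplicativity} identifies the right side with $S_{\nu\boxtimes\mu}(w)$. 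Now apply~\eqref{eq:s-r-coordinate-relation} once more, to $\nu\boxtimes\mu$; its first moment $m_1(\nu)m_1(\mu)$ is nonzero. The point $u'=tR_{\nu\boxtimes\mu}(t)$ satisfies $S_{\nu\boxtimes\mu}(u')=1/R_{\nu\boxtimes\mu}(t)$, so the function $g(t)=tR_{\nu\boxtimes\mu}(t)$ satisfies
\[
g(t)\,S_{\nu\boxtimes\mu}(g(t))=t .
\]
By construction $h(t)=t\Lambda'(t)$ satisfies the same relation, $h(t)\,S_{\nu\boxtimes\mu}(h(t))=t$.

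The map $w\mapsto wS_{\nu\boxtimes\mu}(w)$ has derivative $1/m_1\neq0$ at $0$, so it is locally injective. Since $g$ and $h$ both vanish at $0$ and are continuous, they are the same local inverse, so $h=g$. Thus $\Lambda'=R_{\nu\boxtimes\mu}$ for small $t\neq0$, and by continuity at $0$. Integrating from $0$ gives~\eqref{eq:scalar-product-output-log-laplace}.

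\emph{Main obstacle.} The delicate point is Step 2. The local Laplace principle needs the maximizer localized in a compact $J\subset\Phi_Q'(U)$, uniformly over the $o(1)$ perturbation. This is handled exactly as in Theorem~\ref{thm:radial-s-transform}, by first shrinking $t$ so that $f_t$ is nearly constant.

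The last inversion step also needs care. It relies on continuity of $y(t)$, so that $s,u,w$ stay near $0$ where~\eqref{eq:s-r-coordinate-relation} is valid.
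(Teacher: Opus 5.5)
Your proposal is correct and follows the paper's proof essentially step for step. The shared steps are conditioning on $Q_N$ with locally uniform convergence from convexity, the localization, perturbation and analytic clauses of Proposition~\ref{prop:local-laplace}, the coupled saddle equations, and then~\eqref{eq:s-r-coordinate-relation} combined with Corollary~\ref{cor:derived-s-multiplicativity}. The only cosmetic difference is the final uniqueness step: you use local injectivity of $w\mapsto wS_{\nu\boxtimes\mu}(w)$ for $t\Lambda'(t)$ and $tR_{\nu\boxtimes\mu}(t)$, whereas the paper applies the implicit-function theorem to $y\,S_{\nu\boxtimes\mu}(ty)=1$; these are the same argument in the coordinate $w=ty$.
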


\begin{proof}
Set $\Phi_P(s)=\int_0^sR_\nu(u)\dd u$ and $\Phi_Q(s)=\int_0^sR_\mu(u)\dd u$, and choose $C_Q>0$ with $0\le Q_N\le C_Q$. By convexity, the convergences in~\eqref{eq:scalar-product-input} are locally uniform for real $s$ near $0$. Independence therefore gives, for each sufficiently small real $t$,
\begin{align*}
  \E\left[e^{NtP_NQ_N}\mid Q_N\right]=\exp\{N\Phi_P(tQ_N)+N\varepsilon_N(t,Q_N)\},
  \qquad \sup_{q\in[0,C_Q]}|\varepsilon_N(t,q)|\to0.
\end{align*}

Since $\Phi_Q''(0)=\Var(\mu)>0$, choose a small open interval $U_Q$ about $0$ on which $\Phi_Q$ is strictly convex, and a compact interval $J\subset\Phi_Q'(U_Q)$ containing $\Phi_Q'(0)$ in its interior. The functions $q\mapsto\Phi_P(tq)$ converge uniformly to $0$ on $[0,C_Q]$. The localization and perturbation clauses of Proposition~\ref{prop:local-laplace} thus give
\begin{align}\label{eq:scalar-product-variational-formula}
  \Phi_{PQ}(t):=\lim_{N\to\infty}\frac1N\log\E e^{NtP_NQ_N}
  =\max_{q\in[0,C_Q]}\{\Phi_P(tq)-\mathcal I_Q(q)\},
\end{align}
where $\mathcal I_Q(q)=\sup_{s\in U_Q}\{sq-\Phi_Q(s)\}$. By the analytic clause of the same proposition, after shrinking the parameter interval, the maximizer $q(t)$ is unique and analytic, with $q(0)=m_1(\mu)$.

Set $s=tq(t)$ and $u=tR_\nu(s)$.
The stationarity condition gives $\mathcal I_Q'(q(t))=u$. Since $\mathcal I_Q'=(\Phi_Q')^{-1}$ locally, we obtain
$q(t)=R_\mu(u)$. Differentiating the value at the maximizer gives
\begin{align*}
  s=tR_\mu(u),\qquad u=tR_\nu(s),\qquad
  y(t):=\Phi_{PQ}'(t)=R_\mu(u)R_\nu(s).
\end{align*}
Thus $ty(t)=sR_\nu(s)=uR_\mu(u)$, and~\eqref{eq:s-r-coordinate-relation} together with Corollary~\ref{cor:derived-s-multiplicativity} yields
\begin{align*}
  y(t)S_{\nu\boxtimes\mu}(ty(t))=1.
\end{align*}
The function $R_{\nu\boxtimes\mu}(t)$ satisfies the same equation and has the same value $m_1(\nu)m_1(\mu)$ at $0$. The implicit-function theorem gives local uniqueness, since the derivative of the left-hand side with respect to $y$ at $t=0$ is $S_{\nu\boxtimes\mu}(0)=1/(m_1(\nu)m_1(\mu))\ne0$. Hence $\Phi_{PQ}'(t)=R_{\nu\boxtimes\mu}(t)$ near $0$. Integrating from $0$ proves~\eqref{eq:scalar-product-output-log-laplace}.
\end{proof}

For matrices with deterministic singular values as in Corollary~\ref{cor:local-free-deterministic},
the fixed-order cumulant asymptotics of the product-state integral follow from \cite[Lemma~5.3]{CollinsGurauLionniII}.
If these deterministic singular values are uniformly bounded away from zero, the limit in~\eqref{eq:local-product-state-log-laplace} can also be deduced from the rank-one asymptotics in \cite{Husson2025}, using independence, the multiplicativity of the $S$-transform and a local form of the Laplace principle.
The following theorem establishes convergence of the normalized logarithm for sufficiently small real parameters under assumptions on the individual annealed rank-one spherical integrals.

\begin{theorem}[product-state vector and free convolution]
\label{thm:local-free}
Fix $k\ge2$.
Let $X_{1,N},\ldots,X_{k,N}$ be independent random matrices.
Assume that there is $M<\infty$ such that $\norm{X_{i,N}}\leq M$ almost surely for every $i,N$.
Let $x_{1,N},\ldots,x_{k,N}$ be independent Haar unit vectors in $\C^N$, independent of the matrices, and set
\begin{align}\label{eq:local-product-state-variable}
  Z_N=
  \norm{(X_{1,N}\otimes\cdots\otimes X_{k,N})(x_{1,N}\otimes\cdots\otimes x_{k,N})}^2 .
\end{align}
Let $\mu_1$ be a compactly supported probability measure on $[0,\infty)$ with non-zero first moment, and let $\mu_2,\ldots,\mu_k$ be compactly supported non-degenerate probability measures on $[0,\infty)$.
Assume that, for each $i=1,\ldots,k$, the normalized logarithms of the annealed rank-one spherical integrals satisfy, for $t$ in a neighborhood of $0$,
\begin{align}\label{eq:one-leg-log-spherical-assumption}
  \Phi_i(t)
  :=\lim_{N\to\infty}\frac1N\log\E_{X_{i,N},x_{i,N}}\exp\{Nt\norm{X_{i,N}x_{i,N}}^2\}=\int_0^tR_{\mu_i}(s)\dd s.
\end{align}
For $i\geq2$, non-degeneracy and non-negativity imply $m_1(\mu_i)>0$, so every $S_{\mu_i}$ below is well defined.

Let $\eta=\mu_1\boxtimes\cdots\boxtimes\mu_k$, with $\boxtimes$ as in Definition~\ref{def:boxtimes}. Then, for real $t$ sufficiently close to $0$,
\begin{align}\label{eq:local-product-state-log-laplace}
  \lim_{N\to\infty}\frac1N\log\E\exp\{NtZ_N\}
  =\int_0^t R_\eta(s)\dd s .
\end{align}
By Corollary~\ref{cor:derived-s-multiplicativity},
\begin{align}\label{eq:local-product-state-s-transform}
  S_\eta(z)=S_{\mu_1}(z)\cdots S_{\mu_k}(z)
\end{align}
for all sufficiently small complex $z$.
\end{theorem}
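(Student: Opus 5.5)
The plan is to reduce $Z_N$ to a product of independent scalars and then iterate Proposition~\ref{prop:scalar-product-step}. Since $(X_{1,N}\otimes\cdots\otimes X_{k,N})(x_{1,N}\otimes\cdots\otimes x_{k,N})=X_{1,N}x_{1,N}\otimes\cdots\otimes X_{k,N}x_{k,N}$ and the Euclidean norm is multiplicative on elementary tensors, we have
\begin{align*}
  Z_N=\prod_{i=1}^k P_{i,N},\qquad P_{i,N}:=\norm{X_{i,N}x_{i,N}}^2 .
\end{align*}
The pairs $(X_{i,N},x_{i,N})$ are mutually independent, so the $P_{i,N}$ are independent. Each satisfies $0\le P_{i,N}\le M^2$. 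By \eqref{eq:one-leg-log-spherical-assumption}, each has speed-$N$ log-Laplace limit $\int_0^sR_{\mu_i}(u)\dd u$ near $0$.

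Next I would induct on $j=1,\ldots,k$ with the claim that $Z^{(j)}_N:=P_{1,N}\cdots P_{j,N}$ satisfies
\begin{align*}
  \lim_{N\to\infty}\frac1N\log\E e^{NsZ^{(j)}_N}=\int_0^sR_{\eta_j}(u)\dd u,\qquad \eta_j=\mu_1\boxtimes\cdots\boxtimes\mu_j,
\end{align*}
for $s$ in a neighborhood of $0$. The case $j=1$ is the hypothesis. For the step, apply Proposition~\ref{prop:scalar-product-step} with $P_N=Z^{(j)}_N$, $\nu=\eta_j$, $Q_N=P_{j+1,N}$ and $\mu=\mu_{j+1}$. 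The hypotheses check out as follows:
\begin{itemize}
\item[(i)] $Z^{(j)}_N$ is a function of the first $j$ legs only, so it is independent of $P_{j+1,N}$, and it is bounded by $M^{2j}$.
\item[(ii)] $\mu_{j+1}$ is non-degenerate, hence $m_1(\mu_{j+1})>0$ since it lives on $[0,\infty)$.
\item[(iii)] We need $m_1(\eta_j)\ne0$. The first moment of a free multiplicative convolution is the product of the first moments ($\tau(xy)=\tau(x)\tau(y)$ for free $x,y$), so $m_1(\eta_j)=m_1(\mu_1)\cdots m_1(\mu_j)\ne0$.
\item[(iv)] $\eta_j$ is compactly supported on $[0,\infty)$, as the law of a positive bounded operator.
\end{itemize}
The proposition then yields the claim for $j+1$ with $\eta_j\boxtimes\mu_{j+1}=\eta_{j+1}$, by associativity in Definition~\ref{def:boxtimes}. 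Taking $j=k$ gives \eqref{eq:local-product-state-log-laplace}.

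The main point requiring care is the shrinking of the parameter neighborhoods. Each application of Proposition~\ref{prop:scalar-product-step} holds only for $t$ in some neighborhood of $0$, possibly smaller than the input one. Since $k$ is fixed, only finitely many shrinkings occur, so a common nonempty neighborhood remains. One must also note that Proposition~\ref{prop:scalar-product-step} requires its input only on some neighborhood of $0$. This matches what the induction provides.

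Finally, \eqref{eq:local-product-state-s-transform} follows by applying Corollary~\ref{cor:derived-s-multiplicativity} $k-1$ times to $\eta_{j+1}=\eta_j\boxtimes\mu_{j+1}$. All first moments involved are nonzero by (iii) above. Intersecting the finitely many disks on which each identity holds gives a common small disk about $0$ on which $S_\eta=S_{\mu_1}\cdots S_{\mu_k}$.
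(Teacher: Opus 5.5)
Your proposal is correct and follows essentially the same route as the paper. The paper's proof also factors $Z_N$ into the independent one-leg variables $\norm{X_{i,N}x_{i,N}}^2$, inducts on the partial products via Proposition~\ref{prop:scalar-product-step} with finitely many shrinkings of the neighborhood, and then invokes Corollary~\ref{cor:derived-s-multiplicativity} for the $S$-transform identity, which you unpack into $k-1$ applications.
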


\begin{proof}
Put
\begin{align}\label{eq:one-leg-radial-factors}
  Q_{i,N}:=\norm{X_{i,N}x_{i,N}}^2.
\end{align}
Since the norm is multiplicative on simple tensors, for every realization,
\begin{align*}
  Z_N=\prod_{i=1}^kQ_{i,N},
\end{align*}
and the $k$ factors defined in~\eqref{eq:one-leg-radial-factors} are independent.

For $i\ge2$, $\Phi_i''(0)=\Var(\mu_i)>0$, so $\Phi_i$ is strictly convex near $0$. For $1\le j\le k$, set $P_N^{(j)}=\prod_{i=1}^jQ_{i,N}$ and $\eta_j=\mu_1\boxtimes\cdots\boxtimes\mu_j$. The variables $P_N^{(j)}$ are bounded by $M^{2j}$, and the measures $\eta_j$ are compactly supported with $m_1(\eta_j)=\prod_{i=1}^jm_1(\mu_i)>0$.

We prove inductively that
\begin{align}\label{eq:iterated-product-log-laplace}
  \lim_{N\to\infty}\frac1N\log\E e^{NtP_N^{(j)}}=\int_0^tR_{\eta_j}(s)\dd s
\end{align}
for real $t$ sufficiently close to $0$. The case $j=1$ is~\eqref{eq:one-leg-log-spherical-assumption}. If the formula holds for $j<k$, then $P_N^{(j)}$ and $Q_{j+1,N}$ are independent, and Proposition~\ref{prop:scalar-product-step} gives the formula for $j+1$, since $\eta_{j+1}=\eta_j\boxtimes\mu_{j+1}$. The neighborhood may shrink at each step and thus depend on the fixed number $k$, but not on $N$. Taking $j=k$ proves~\eqref{eq:local-product-state-log-laplace}; Corollary~\ref{cor:derived-s-multiplicativity} gives~\eqref{eq:local-product-state-s-transform}.
\end{proof}

The measure $\eta$ is the free-convolution measure defined in Definition~\ref{def:boxtimes}. To distinguish it from the ordinary weak limit, note that~\eqref{eq:one-leg-log-spherical-assumption}, the expansion $\Phi_i(t)=m_1(\mu_i)t+O(t^2)$, and exponential Chebyshev bounds give $Q_{i,N}\to m_1(\mu_i)$ in probability. Consequently, for each $j$, the law of $P_N^{(j)}$ converges weakly to $\delta_{\prod_{i=1}^jm_1(\mu_i)}$.

\begin{corollary}
Under the hypotheses of Theorem~\ref{thm:local-free}, assume in
addition that the matrices $X_{i,N}$ are right-unitarily invariant.
If $x_N$ is a single independent Haar unit vector, then the conclusions
\eqref{eq:local-product-state-log-laplace} and
\eqref{eq:local-product-state-s-transform} remain valid with
$x_N^{\otimes k}$ in place of
$x_{1,N}\otimes\cdots\otimes x_{k,N}$.
\end{corollary}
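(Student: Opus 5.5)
The plan is to show that, with $x_N$ a single Haar vector and the $X_{i,N}$ right-unitarily invariant, the variable
\begin{align*}
  Z_N'=\norm{(X_{1,N}\otimes\cdots\otimes X_{k,N})x_N^{\otimes k}}^2=\prod_{i=1}^k\norm{X_{i,N}x_N}^2
\end{align*}
has the same law as $Z_N$ in~\eqref{eq:local-product-state-variable}. Once this holds, $\E e^{NtZ_N'}=\E e^{NtZ_N}$ for every $N$ and every real $t$. Then \eqref{eq:local-product-state-log-laplace} transfers verbatim. Identity \eqref{eq:local-product-state-s-transform} concerns only the measures $\mu_i$ and does not involve the vectors, so it is unchanged.

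\textbf{Step 1: factorization.} The norm is multiplicative on simple tensors, so for every realization $Z_N'=\prod_i\norm{X_{i,N}x_N}^2$.

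\textbf{Step 2: independent replacement of the common vector.} Let $V_{1},\ldots,V_{k}$ be independent Haar unitaries on $\U(N)$, independent of everything else. Right-unitary invariance together with the independence of the $X_{i,N}$ gives
\begin{align*}
  (X_{1,N},\ldots,X_{k,N})\stackrel{d}{=}(X_{1,N}V_1,\ldots,X_{k,N}V_k)
\end{align*}
jointly. This holds because each coordinate is invariant separately and the coordinates are independent. Since $x_N$ is independent of the matrices, we get
\begin{align*}
  (X_{i,N}x_N)_{i}\stackrel{d}{=}(X_{i,N}V_ix_N)_i.
\end{align*}
Conditionally on $x_N$, the vectors $V_ix_N$ are independent Haar unit vectors, independent of the matrices, and their joint law does not depend on $x_N$. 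Hence $(X_{i,N}V_ix_N)_i\stackrel{d}{=}(X_{i,N}x_{i,N})_i$, and taking the product of squared norms gives $Z_N'\stackrel{d}{=}Z_N$.

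\textbf{Step 3: conclusion.} The variables are bounded, so the exponential moments are finite and equal. Theorem~\ref{thm:local-free} then applies directly.

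The only delicate point is Step 2. There one must make sure the joint law, not merely the marginals, is preserved when the independent $V_i$ are inserted; this is where mutual independence of the matrices is used. The hypotheses~\eqref{eq:one-leg-log-spherical-assumption} are stated with independent vectors $x_{i,N}$, and they are not affected by this replacement.
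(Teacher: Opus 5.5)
Your proof is correct and follows essentially the same route as the paper. The paper conditions on $x_N$, uses independence of the matrices to get conditional independence of the factors $\norm{X_{i,N}x_N}^2$, and uses right-unitary invariance to make each conditional law independent of $x_N$. Your insertion of independent Haar unitaries $V_i$ is an explicit form of the same argument, and both then conclude from $Z_N'\stackrel{d}{=}Z_N$ via Theorem~\ref{thm:local-free}.
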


\begin{proof}
Conditional on $x_N$, the variables $\norm{X_{i,N}x_N}^2$ are
independent, and right-unitary invariance makes each conditional law
independent of $x_N$.  They therefore have the same joint law as the
independent factors used in Theorem~\ref{thm:local-free}.
\end{proof}

\begin{remark}[point-mass factors]
The non-degeneracy assumption is not needed for point-mass factors. If $\mu_i=\delta_{a_i}$, then the one-leg assumption gives
\begin{align*}
  \Lambda_{i,N}(s):=\frac1N\log\E e^{NsQ_{i,N}}\longrightarrow a_is
\end{align*}
locally uniformly near $0$. Hence, for any uniformly bounded product $P_N$ of the remaining independent factors,
\begin{align*}
  \frac1N\log\E e^{NtP_NQ_{i,N}}=\frac1N\log\E e^{Na_itP_N}+o(1)
\end{align*}
uniformly for $t$ in a sufficiently small neighborhood of $0$. Thus each point-mass factor acts as the rescaling $t\mapsto a_it$, and the argument extends to any number of such factors. For $a_i>0$, this is consistent with $S_{\delta_{a_i}}(z)=a_i^{-1}$ and with dilation under $\boxtimes$. If $a_i=0$, the logarithmic Laplace limit is identically zero near $0$, while the $S$-transform is not defined.
\end{remark}

\begin{corollary}
\label{cor:local-free-deterministic}
Fix $k\ge2$. Let $X_{1,N},\ldots,X_{k,N}\in M_N(\C)$ be independent random matrices with deterministic singular values, and let $x_{1,N},\ldots,x_{k,N}$ be independent Haar unit vectors, independent of the matrices.
Assume that there is $0<M<\infty$ such that
$\norm{X_{i,N}}\le M$
for every $i,N$, and that
\begin{align*}
  \mu_{X_{i,N}^*X_{i,N}}\longrightarrow\mu_i
\end{align*}
weakly for each $i$, where $\mu_1$ is compactly supported with non-zero first moment and $\mu_2,\ldots,\mu_k$ are compactly supported non-degenerate probability measures on $[0,\infty)$. Then, for every real $t$ with
$|t|<\frac1{64M^2}$, the one-leg formula~\eqref{eq:one-leg-log-spherical-assumption} holds.
Consequently, after possibly shrinking the neighborhood of $0$ required in the iterative saddle-point argument, the conclusion of Theorem~\ref{thm:local-free} follows.
\end{corollary}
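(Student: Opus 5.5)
The plan is to verify the one-leg hypothesis~\eqref{eq:one-leg-log-spherical-assumption} for each factor separately and then invoke Theorem~\ref{thm:local-free} directly.

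Fix $i$. Since $x_{i,N}$ is Haar and independent of $X_{i,N}$, we have, conditionally on $X_{i,N}$,
\begin{align*}
  \E_{x_{i,N}}\exp\{Nt\norm{X_{i,N}x_{i,N}}^2\}=I_N(t;X_{i,N}^*X_{i,N}).
\end{align*}
By Proposition~\ref{prop:finite-reconstruction}, or simply by unitary invariance of the Haar vector, this quantity depends only on the spectrum of $X_{i,N}^*X_{i,N}$, that is, on the squared singular values. These are deterministic by assumption, so the conditional integral is deterministic. Averaging over $X_{i,N}$ therefore changes nothing, and the annealed integral equals $I_N(t;D_{i,N}^2)$ with $D_{i,N}$ the deterministic singular-value matrix.

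Next, apply Proposition~\ref{prop:loewner-squeeze} with $H_N=D_{i,N}^2$, $C=M^2$ and $\nu=\mu_i$. Since $\mu_{H_N}$ is deterministic and converges weakly, we have $d_{\mathrm L}(\mu_{H_N},\mu_i)\le\varepsilon$ for all large $N$. The probability in~\eqref{eq:loewner-squeeze-spectral-concentration} is therefore eventually $0$, so its normalized logarithm is $-\infty$. Because $\norm{H_N}\le M^2$, the proposition gives the limit $\int_0^tR_{\mu_i}(s)\dd s$ for $|t|<1/(64M^2)$, which is~\eqref{eq:one-leg-log-spherical-assumption}. The supports lie in $[0,M^2]$, consistent with the proposition's hypotheses. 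Alternatively, Theorem~\ref{thm:rank-one-hciz} in its small-$t$ form could be used, but Proposition~\ref{prop:loewner-squeeze} supplies the explicit radius without any edge assumptions.

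Finally, all hypotheses of Theorem~\ref{thm:local-free} now hold: independence, the uniform norm bound $M$, the non-zero first moment of $\mu_1$, non-degeneracy of $\mu_2,\dots,\mu_k$, and the one-leg limits on a common interval $|t|<1/(64M^2)$. The theorem then yields~\eqref{eq:local-product-state-log-laplace} and~\eqref{eq:local-product-state-s-transform} on a possibly smaller neighborhood, coming from the iterated applications of Proposition~\ref{prop:scalar-product-step}.

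There is no real obstacle. The only point needing care is that the conditional integral is genuinely deterministic, so that no rare-event issue of the annealed type arises; this is immediate from the spectral dependence just described.
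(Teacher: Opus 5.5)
Your proposal is correct and takes the same route as the paper: the deterministic measures $\mu_{X_{i,N}^*X_{i,N}}$ satisfy the concentration hypothesis of Proposition~\ref{prop:loewner-squeeze} trivially, that proposition with $C=M^2$ gives the one-leg formula on $|t|<1/(64M^2)$, and Theorem~\ref{thm:local-free} then applies. Your preliminary reduction to the deterministic matrix $D_{i,N}^2$ is harmless but unnecessary, since the proposition already applies to the random $H_{i,N}=X_{i,N}^*X_{i,N}$; also, Proposition~\ref{prop:finite-reconstruction} states the converse of ``depends only on the spectrum'', so the unitary invariance of the Haar vector, which you also give, is the correct justification for that step.
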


\begin{proof}
Set $H_{i,N}=X_{i,N}^*X_{i,N}$. Then $0\le H_{i,N}\le M^2\Id_N$, and $\mu_{H_{i,N}}$ is deterministic and converges weakly to $\mu_i$. For every $\delta>0$, the probability $\PP(d_{\mathrm L}(\mu_{H_{i,N}},\mu_i)>\delta)$ is therefore zero for all sufficiently large $N$, so the concentration hypothesis of Proposition~\ref{prop:loewner-squeeze} holds. Applying that proposition with $C=M^2$ gives~\eqref{eq:one-leg-log-spherical-assumption} for every real $|t|<1/(64M^2)$. The remaining hypotheses of Theorem~\ref{thm:local-free} hold by assumption, so its conclusion follows on a possibly smaller neighborhood of $0$.
\end{proof}

\begin{remark}
Corollary~\ref{cor:local-free-deterministic} does not require convergence of the extreme eigenvalues.
By contrast, Theorem~\ref{thm:rank-one-hciz} gives the one-leg formula on the larger, measure-dependent domain $\mathcal D_{\mu_i}$ under convergence of the extreme eigenvalues to the corresponding support edges; see \cite{GuionnetMaida2005}.
\end{remark}

\begin{remark}
Proposition~\ref{prop:loewner-squeeze} also applies when the singular values are random, provided that the uniform norm bound is retained and, for each $i$, $\mu_{X_{i,N}^*X_{i,N}}$ concentrates superexponentially at speed $N$ around $\mu_i$ as in~\eqref{eq:loewner-squeeze-spectral-concentration}.
\end{remark}

\begin{example}[Marchenko–Pastur and Fuss–Catalan laws]
We apply Corollary~\ref{cor:local-free-deterministic} to Marchenko--Pastur limiting Gram measures; the resulting limit is expressed through the $R$-transform of a Fuss--Catalan law.
Let $\pi$ be the Marchenko--Pastur law of parameter one, supported on $[0,4]$. For each $i=1,\ldots,k$, choose deterministic diagonal matrices $D_{i,N}$ such that the empirical spectral measures of $D_{i,N}^*D_{i,N}$ converge weakly to $\pi$ and such that $\norm{D_{i,N}}\leq2$.
Let $U_{i,N},V_{i,N}$ be independent Haar unitary matrices, independent across $i$, and set
$X_{i,N}=U_{i,N}D_{i,N}V_{i,N}^*$.
Corollary~\ref{cor:local-free-deterministic} applies with $M=2$.
Since $S_\pi(z)=\frac1{1+z}$, the free-convolution measure in Theorem~\ref{thm:local-free} is $\eta=\pi^{\boxtimes k}$ and $S_\eta(z)=(1+z)^{-k}$.
Its moments are the Fuss--Catalan numbers
\begin{align*}
  m_n(\eta)=\frac1{kn+1}\binom{(k+1)n}{n},
  \qquad n\geq0;
\end{align*}
see~\cite{BanicaBelinschiCapitaineCollins2011,PensonZyczkowski2011}.
Thus, for the product-state variable $Z_N$ in \eqref{eq:local-product-state-variable},
\begin{align*}
  \lim_{N\to\infty}\frac1N\log\E e^{NtZ_N}
  =\int_0^tR_{\pi^{\boxtimes k}}(s)\,\dd s,
\end{align*}
for $t$ in a neighborhood of zero.

The limiting Gram measures alone do not justify replacing this bounded model by Ginibre matrices.
For independent normalized complex Ginibre matrices $G_{1,N},\ldots,G_{k,N}$ with entry variance $1/N$, the empirical Gram measures converge weakly to the Marchenko--Pastur law. However, for $k\ge2$ and $t>0$, the product-state exponential moment is infinite at every finite $N$: the variables $Q_i=\norm{G_{i,N}x_{i,N}}^2$ are independent Gamma variables with shape and rate $N$, so $\E e^{NtqQ_k}=\infty$ whenever $tq\ge1$. Since $\prod_{i=1}^{k-1}Q_i\ge1/t$ has positive probability, conditioning gives $\E\exp\{Nt\prod_{i=1}^kQ_i\}=\infty$.
\end{example}

\section{Three or more tensor factors}
\label{sec:higher-leg-obstruction}

The exact identity of Theorem~\ref{thm:exact-two-leg-hciz} might suggest that a similar relation between the tensor product and the ordinary matrix product persists for a larger fixed number of factors.
We now show that such a relation fails in general for every sufficiently small fixed positive parameter.

Fix $k\ge3$,
and let $\xi_N$ be Haar distributed on the unit sphere of $(\C^N)^{\otimes k}$, independently of the matrix factors. Define
\begin{align*}
  \mathcal X_N=X_{1,N}\otimes\cdots\otimes X_{k,N},
  \qquad
  \mathcal P_N=(X_{1,N}\cdots X_{k,N}) \otimes\Id_N^{\otimes(k-1)}
\end{align*}
and the annealed free energies
\begin{equation}
  \begin{aligned}
    F_N^\otimes(t)
    &=\frac1{N^k}\log\E_{X_1,\ldots,X_k,\xi_N}
    \exp\{N^k t\norm{\mathcal X_N\xi_N}^2\},\\
    F_N^{\mathrm{prod}}(t)
    &=\frac1{N^k}\log\E_{X_1,\ldots,X_k,\xi_N}
    \exp\{N^k t\norm{\mathcal P_N\xi_N}^2\}.
  \end{aligned}
\end{equation}

\begin{proposition}
\label{prop:higher-leg-counterexample}
For every fixed $k\ge3$, there are independent bi-unitarily
invariant matrices with deterministic singular values for which, for
every sufficiently small fixed $t>0$,
\begin{align}\label{eq:higher-leg-strict-gap}
  \liminf_{N\to\infty}F_N^{\mathrm{prod}}(t)
  > \lim_{N\to\infty}F_N^\otimes(t).
\end{align}
\end{proposition}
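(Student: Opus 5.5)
The plan is to compare the two sides through their first-order behaviour in $t$. The tensor side is pinned to the classical convolution by Theorem~\ref{thm:global-classical}. The product side, at speed $N^k$ with $k\ge3$, can afford to exploit atypical relative singular-vector alignments, because these have probability only $e^{-O(N^2)}$.

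\textbf{Construction.} Fix $0<b<a$ and, for each $i=1,\ldots,k$, let $D_{i,N}=D_N$ be the diagonal matrix with $\lfloor N/2\rfloor$ entries $a$ followed by entries $b$, in decreasing order. Set $X_{i,N}=U_{i,N}D_NV_{i,N}^*$ with independent Haar unitaries. Then $\mu_{X_{i,N}^*X_{i,N}}\to\mu=\frac12(\delta_{a^2}+\delta_{b^2})$ and $\norm{X_{i,N}}\le a$.

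\textbf{Step 1: the tensor side.} By Theorem~\ref{thm:global-classical} (deterministic singular values, so the annealed and conditional quantities coincide),
\[
\lim_N F_N^\otimes(t)=\int_0^tR_{\mu^{\circledast k}}(s)\dd s=t\,m_1(\mu)^k+O(t^2)
\]
for $|t|<1/(64a^{2k})$, where $m_1(\mu)^k=\bigl(\tfrac{a^2+b^2}{2}\bigr)^k$.

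\textbf{Step 2: lower bound on the product side.} As in Proposition~\ref{cor:exact-spherical-two-leg}, the squared norm $\norm{\mathcal P_N\xi_N}^2$ depends on the matrices only through the spectrum of $H_N=(X_1\cdots X_k)^*(X_1\cdots X_k)$, and the eigenvalues of $H_N\otimes\Id^{\otimes(k-1)}$ are those of $H_N$ with multiplicity $N^{k-1}$. The product $X_1\cdots X_k$ equals $U_1D_NW_1D_NW_2\cdots W_{k-1}D_NV_k^*$ with $W_i=V_i^*U_{i+1}$ independent and Haar. Fix $\varepsilon>0$ and consider the event $E_\varepsilon=\{\norm{W_i-\Id_N}\le\varepsilon\ \text{for all } i\}$. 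A volumetric lower bound for operator-norm balls in $\U(N)$ gives $\PP(E_\varepsilon)\ge e^{-c(\varepsilon)N^2}$; to justify it, I would use a net or covering argument on $\U(N)$, or the Lie-algebra chart with the Haar density bounded below near $\Id$. On $E_\varepsilon$,
\[
\norm{X_1\cdots X_k-U_1D_N^kV_k^*}\le k a^k\varepsilon .
\]
Weyl's inequality then gives $\lambda_j(H_N)\ge\lambda_j(D_N^{2k})-C_{a,k}\varepsilon$ for every $j$. Since $I$ depends only on the spectrum and is monotone in each ordered eigenvalue for $t>0$, we get $I_{N^k}(t;H_N\otimes\Id)\ge I_{N^k}(t;(D_N^{2k}-C\varepsilon)_+\otimes\Id)$. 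Hence
\[
F_N^{\mathrm{prod}}(t)\ge \frac{-c(\varepsilon)N^2}{N^k}+\frac1{N^k}\log I_{N^k}\bigl(t;(D_N^{2k}-C\varepsilon)_+\otimes\Id\bigr).
\]
The first term vanishes because $k\ge3$; this is exactly where $k\ge3$ enters. The second converges by Theorem~\ref{thm:rank-one-hciz}, with a deterministic spectrum and small $t$, to $\int_0^tR_{\nu_\varepsilon}(s)\dd s$, where $\nu_\varepsilon$ is the law $\frac12(\delta_{a^{2k}}+\delta_{b^{2k}})$ shifted down by $C\varepsilon$ and truncated at $0$.

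\textbf{Step 3: comparison.} We have
\[
\int_0^tR_{\nu_\varepsilon}=t\,m_1(\nu_\varepsilon)+O(t^2),\qquad m_1(\nu_\varepsilon)\ge\frac{a^{2k}+b^{2k}}2-C\varepsilon .
\]
By strict convexity of $x\mapsto x^k$ and $a\ne b$, $\frac{a^{2k}+b^{2k}}{2}>\bigl(\frac{a^2+b^2}2\bigr)^k$. Choose $\varepsilon$ so small that the gap $g$ in first moments stays positive. The $O(t^2)$ terms are uniformly bounded, since all measures are supported in $[0,a^{2k}]$ and the second derivatives of the primitives at small $t$ are bounded uniformly over such measures. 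Therefore $tg+O(t^2)>0$ for all sufficiently small $t>0$, which gives~\eqref{eq:higher-leg-strict-gap}.

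\textbf{Main obstacle.} I expect the hardest step to be the probability lower bound $\PP(E_\varepsilon)\ge e^{-O(N^2)}$ at speed $N^2$. Any bound of the form $e^{-o(N^k)}$ suffices, so even a crude bound such as $e^{-CN^2\log(1/\varepsilon)}$, obtained from the covering numbers of $\U(N)$ in operator norm, is enough.
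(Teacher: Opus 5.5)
Your proposal is correct and follows essentially the same route as the paper. Both arguments force the relative singular-vector unitaries close to $\Id_N$ on an event of probability $e^{-O(N^2)}$, obtained from covering numbers of $\U(N)$. That cost is invisible at speed $N^k$ once $k\ge3$, so the product side beats the tensor side at first order in $t$ through a comparison of first moments.

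The differences are minor. The paper uses only two non-trivial factors, with singular values $1,\sqrt2$, and takes the remaining factors to be Haar unitaries. In place of your Weyl--monotonicity--HCIZ step, it uses the one-line Jensen bound $\frac1{N^k}\log I_{N^k}(t;H\otimes\Id_N^{\otimes(k-1)})\ge t\,\tr_N(H)$. This already suffices because only the linear term in $t$ is compared.
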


\begin{proof}

Let $m_N=\lfloor N/2\rfloor$ and, for $i=1,2$, set
\begin{align*}
  S_N=\diag\left(\underbrace{1,\ldots,1}_{N-m_N},
  \underbrace{\sqrt2,\ldots,\sqrt2}_{m_N} \right),
  \qquad
  X_{i,N}=U_{i,N}S_NV_{i,N}^*,
\end{align*}
where the four unitary matrices are independent Haar unitaries. Let $X_{3,N},\ldots,X_{k,N}$ be independent Haar unitaries, independent of the first two factors. The empirical spectral measures of $X_{1,N}^*X_{1,N}$ and $X_{2,N}^*X_{2,N}$ converge weakly to
$\mu=\frac12\delta_1+\frac12\delta_2$.
The empirical spectral measure of $\mathcal X_N^*\mathcal X_N$ is the classical multiplicative convolution of these two empirical spectral measures and therefore converges weakly to
\begin{align*}
  \nu=\mu\circledast\mu
  =\frac14\delta_1+\frac12\delta_2+\frac14\delta_4.
\end{align*}
Since the singular values are deterministic and uniformly bounded, Theorem~\ref{thm:global-classical} gives, for every fixed $t$ in a neighborhood of $0$,
\begin{align}\label{eq:higher-leg-tensor-limit}
  \lim_{N\to\infty}F_N^\otimes(t)
  =\int_0^tR_\nu(s)\dd s.
\end{align}

For the ordinary product, the unitary factors $X_{3,N},\ldots,X_{k,N}$ do not change the singular values. After removing the outer unitary matrices, the only relevant relative singular-vector matrix is
\begin{align*}
  W_N=V_{1,N}^*U_{2,N},
\end{align*}
which is Haar distributed on $\U(N)$. Conditionally on $W_N=W$, the matrix $(X_{1,N}\cdots X_{k,N})^*(X_{1,N}\cdots X_{k,N})$ is unitarily conjugate to
\begin{align*}
  H_N(W)=S_NW^*S_N^2WS_N.
\end{align*}

For each fixed $W\in\U(N)$, Jensen's inequality and the Haar average of a quadratic form give
\begin{align*}
  \frac1{N^k}\log\E_{\xi_N}\exp\left\{N^k t\xi_N^*\left(H_N(W)\otimes\Id_N^{\otimes(k-1)}\right)\xi_N\right\}\ge t\tr_N(H_N(W)).
\end{align*}

Fix $\delta\in(0,1)$.
There is a constant $c_\delta<\infty$, independent of $N$, such that
\begin{align}\label{eq:higher-leg-haar-ball}
  \PP\left(\norm{W_N-\Id_N}\leq\delta\right)
  \geq e^{-c_\delta N^2}.
\end{align}
By \cite[Theorem~7]{Szarek1998MetricEntropy}, $\U(N)$ can be covered by at most $(C/\delta)^{N^2}$ operator-norm balls of radius $\delta$, with an absolute constant $C>1$. Haar invariance gives all these balls the same probability, which is therefore at least $(\delta/C)^{N^2}$. This proves~\eqref{eq:higher-leg-haar-ball}.

On the event in~\eqref{eq:higher-leg-haar-ball}, the uniform bound on $S_N$ gives
\begin{align*}
  \norm{H_N(W)-S_N^4}\le C_0\delta
\end{align*}
for an absolute constant $C_0$. Restricting the expectation over $W_N$ to this event and using the preceding Jensen bound, we obtain, for $t>0$,
\begin{align*}
  F_N^{\mathrm{prod}}(t)\ge t\tr_N(S_N^4)-C_0t\delta-c_\delta N^{2-k}.
\end{align*}
Since $k\ge3$ and $\tr_N(S_N^4)=1+3m_N/N\to5/2$, letting $N\to\infty$ and then $\delta\downarrow0$ yields
\begin{align*}
  \liminf_{N\to\infty}F_N^{\mathrm{prod}}(t)\ge\frac52t.
\end{align*}

On the other hand, $R_\nu(0)=m_1(\nu)=9/4$, so~\eqref{eq:higher-leg-tensor-limit} gives
\begin{align*}
  \lim_{N\to\infty}F_N^\otimes(t)=\frac94t+O(t^2)
\end{align*}
as $t\to0$. This proves~\eqref{eq:higher-leg-strict-gap} for every sufficiently small fixed $t>0$.
\end{proof}

Our results leave open whether the annealed limit on the matrix-product side, for three or more factors, admits a variational description that accounts for atypical product spectra.

To conclude, we return to the symmetric tensor model of \cite{Gurau2020}, for which universality of the limiting moments is established in \cite{Bonnin2026}. We also leave open whether the primitive of the $R$-transform of the limiting symmetric law can be obtained as a limit of normalized logarithms of spherical integrals constructed directly from these tensors.

\section*{Acknowledgments}

B.C. and M.N. were supported by JSPS Grant-in-Aid for Scientific Research (A), no.~25H00593, and Challenging Research (Exploratory), no.~23K17299. B.C. acknowledges support from the Ministry of
Education, Culture, Sports, Science and Technology (MEXT) through SPReaD (Supporting Pioneering Research through AI for 1,000 Discovery challenges). M.N. acknowledges support from JST SPRING, Grant Number
JPMJSP2110.
B.C., N.D., and R.T. acknowledge the support from
Okinawa Institute of Science and Technology (OIST) SHINKA Grant.
The authors thank the thematic program ``Random tensors and related topics" at Institut Henri Poincar\'e (IHP), Paris, France from September 30 - October 18, 2024, where some of the work in this paper has been carried out.
The authors also acknowledge stimulating discussions with Alice Guionnet, Razvan Gurau, and Luca Lionni. B.C. is grateful to
R.T. and OIST for hospitality at the beginning of this work.

Generative AI tools were used towards the completion of this work and helped tighten some mathematical arguments, provide relevant examples, and edit the text. All mathematical statements, arguments, and references were independently checked and edited by the authors, who take full responsibility for the contents of the paper.

\bibliographystyle{alpha}
\bibliography{references}

\end{document}